\theoremstyle{plain}
\newtheorem{thm}{Theorem}[section]
\newtheorem{lem}[thm]{Lemma}
\newtheorem{prop}[thm]{Proposition}
\theoremstyle{definition}
\newtheorem{defn}{Definition}[section]
\theoremstyle{remark}
\newtheorem{rem}{\bf Remark}[section]
\theoremstyle{remark}
\newtheorem{com*}{\bf Comment}
\def \newequation#1#2{
 \@definecounter{#1}
 \@namedef{the#1}{\hbox{#2}}
 \@namedef{#1}{$$\refstepcounter{#1}}
 \@namedef{end#1}{
    \eqno \csname the#1\endcsname $$\global\@ignoretrue
    }
}
\def \newequation#1#2{
 \@definecounter{#1}
 \@namedef{the#1}{\hbox{#2}}
 \@namedef{#1}{$$\refstepcounter{#1}}
 \@namedef{end#1}{
    \eqno \csname the#1\endcsname $$\global\@ignoretrue
    }
 }
\def \newequation#1#2{
 \@definecounter{#1}
 \@namedef{the#1}{\hbox{#2}}
 \@namedef{#1}{$$\refstepcounter{#1}}
 \@namedef{end#1}{
    \eqno \csname the#1\endcsname $$\global\@ignoretrue
    }
 }
\def \newequation#1#2{
 \@definecounter{#1}
 \@namedef{the#1}{\hbox{#2}}
 \@namedef{#1}{$$\refstepcounter{#1}}
 \@namedef{end#1}{
    \eqno \csname the#1\endcsname $$\global\@ignoretrue
    }
 }
\def \newequation#1#2{
 \@definecounter{#1}
 \@namedef{the#1}{\hbox{#2}}
 \@namedef{#1}{$$\refstepcounter{#1}}
 \@namedef{end#1}{
    \eqno \csname the#1\endcsname $$\global\@ignoretrue
    }
 }
\newcommand{\E}{\mathds E}
\newcommand{\PP}{\mathds P}
\title{Improved error bounds for quantization based numerical schemes for BSDE and nonlinear filtering}
\author{ 
{\sc  Gilles Pag\`es} \thanks{Laboratoire de Probabilit\'es et Mod\`eles al\'eatoires (LPMA), UPMC-Sorbonne Universit\'e, UMR 7599, case 188, 4, pl. Jussieu, F-75252 Paris Cedex 5, France.
E-mail: {\tt gilles.pages@upmc.fr} } \ \ \ 
{\sc  Abass Sagna} \thanks{Laboratoire de Math\'ematiques et Mod\'elisation d'Evry (LaMME),   UMR 8071, 23 Boulevard de France, 91037 \'Evry,  \& ENSIIE. E-mail: {\tt abass.sagna@ensiie.fr}. }\ \ \
\thanks{The  first  author  benefited from the support of the Chaire ``Risques financiers'',  a joint initiative of \'Ecole Polytechnique, ENPC-ParisTech and UPMC, under the aegis of the Fondation du Risque. The second author  has benefited from the support of the Chaire ``Markets in Transition'', under the aegis of Louis Bachelier Laboratory, a joint initiative of \'Ecole Polytechnique, Universit\'e d'\'Evry Val d'Essonne and  F\'ed\'eration Bancaire Fran\c{c}aise.} 
}
\date{}
\begin{document}

\maketitle

\begin{abstract}
We take advantage of recent  (see~\cite{GraLusPag1, PagWil}) and new results on optimal quantization theory to improve the  quadratic optimal quantization error bounds for   backward stochastic differential equations (BSDE) and nonlinear filtering problems.  For both problems, a first improvement relies on a Pythagoras like Theorem for quantized conditional expectation.  While allowing  for  some locally Lipschitz continuous conditional densities   in nonlinear filtering,  the analysis of the  error brings  into play a new robustness result about  optimal quantizers, the so-called distortion mismatch  property: the $L^s$-mean quantization error induced by $L^r$-optimal quantizers  of size $N$ 
converges  at the same rate $N^{-\frac 1d}$ for every $s\!\in (0,r+d)$.  
\end{abstract}

\section{Introduction}
 In this work we propose  improved  error bounds for quantization  based numerical schemes introduced in~\cite{BalPag} and~\cite{PagPha} to solve BSDEs and nonlinear filtering problems.   For  BSDE, we consider equations where the driver depends on the ``$Z$'' term (see Equation~\eqref{EqBSDEIntro} below) and for nonlinear filtering, we extend existing results  to locally Lipschitz continuous densities (see Section~\ref{SecNonLinearFilt}).  For both problems, we also improve the error bounds themselves   by using a Pythagoras like theorem for the approximation of conditional expectations introduced in~\cite{PagWil} (see also~\cite{PagCER}).    These problems  have a wide range of applications, in particular in Financial Mathematics, when modeling the price of financial derivatives or  in stochastic control, in credit risk modeling, etc.

BSDEs were first  introduced in~\cite{Bis} but raised a wide interest mostly after their extension in~\cite{ParPen}. In this latter paper,  the existence and the uniqueness of a solution have been  established for  the following backward  stochastic differential equation  with Lipschitz continuous driver $f$ (valued in $\mathds R^d$) and terminal condition $\xi$:
\begin{equation}  \label{EqBSDEIntro}
Y_t  = \xi +\int_t^T f(s,Y_s,Z_s) ds - \int_t^T Z_s   dW_s, \qquad  0 \le   t \leq T,
\end{equation} 
where $W$ is a $q$-dimensional brownian motion. We mean by  a solution a pair $(Y_t,Z_t)_{t  \leq T}$  (valued in $\mathds R^d \times \mathds R^{d \times q}$) of square integrable   progressively measurable  (with respect to   the augmented Brownian filtration  $({\cal F}_t)_{t \ge 0}$)  and  satisfying Equation~(\ref{EqBSDEIntro}).  Extensions of these existence and uniqueness results  have been investigated in  more general  situations such as: less regular drivers (locally Lipschitz driver, see~\cite{Bah, Ham}; quadratic BSDEs, see~\cite{Kob};  rough integrals instead of Lebesgue integrals,  see~\cite{DieFri}; super-linear quadratic BSDEs, see~\cite{LepSan}); randomized horizon, see~\cite{Par}; introduction of Poisson random measure component  subject to constraints on the jump component, see~\cite{KhaJinPhaZha, KhaEli};  extension to  second order  BSDEs, see~\cite{SonTouZha});   non-smooth terminal conditions, see~\cite{GeiSte, GeiGeiGob, GobMak}. 

Since the pioneering work~\cite{ElkPenQue} in which the link between BSDE and hedging portfolio of European  (and American) derivatives has been first established, various other applications have been developed, as  risk-sensitive control problems, risk measure theory, etc.  


However, even if   it can be established  in many cases   that a BSDE has a unique solution, this solution admits no closed form  in general. This led to devise  tractable approximation schemes of the solution.   In the Markovian case (see~\eqref{EqBSDEIntro1} below) for example, where the terminal condition is of the form  $\xi = h(X_T)$ for some forward diffusion $X$, a first  numerical method has  been  proposed in~\cite{DouMaPro} for a class of  forward-backward  stochastic differential equations,  based on a four step scheme developed later on  in~\cite{MaProYon}.  

Many other  approximation methods of the solutions of some   classes of BSDEs have been proposed such as BSDEs with possible path-dependent terminal condition, see~\cite{Zha} and~\cite{BrLa},  coupled BSDE, see~\cite{ZhaFuZho}, Reflected BSDE, see~\cite{Huta}, BSDE for quasilinear PDEs, see~\cite{DelMen}, BSDE applied to control  problems or nonlinear PDEs, see~\cite{Bal}. Higher order schemes have also been considered, see~\cite{ChaCri} or~\cite{BaPaPr2}.  As for  quadratic BSDE~--~$i.e.$ when the generator $f$ has a  quadratic growth with respect to $z$~--~we refer to~\cite{ChaRic} where the authors  consider a slightly modified  dynamical programming equation to propose a numerical scheme. They  investigate the time  discretization error and use optimal quantization  to implement their algorithm. However, they do not study the  induced quantization error.


   In the present work, we  consider the following  decoupled FBSDE (Forward-Backward SDE),  
   \begin{equation}   \label{EqBSDEIntro1}
 Y_t  =  \xi  + \int_t^T f(s,X_s,Y_s,Z_s) ds - \int_t^T Z_s \cdot dW_s,\quad t \in [0,T], 
 \end{equation}
 where  $W$ is a $q$-dimensional Brownian motion,  $(Z_t)_{t \in [0,T]}$ is a square integrable progressively measurable process taking values in $\mathds R^q$ and   $f : [0,T]{\small \times} \mathds R^d $  ${\small \times} \mathds R {\small \times \mathds R^q} \rightarrow \mathds R$ is a Borel function.   We suppose  a terminal condition of the form  $\xi = h(X_T)$, for a given Borel function $h:\mathds R^d \rightarrow \mathds R$,  where  $X_T$ is the value at time $T$ of a Brownian diffusion process $(X_t)_{t \geq 0}$,  strong solution to the  SDE:
 \begin{equation} \label{EqXIntro}
 X_t  = x  +  \int_0^t b(s,X_s)ds  + \int_0^t \sigma(s,X_s)  dW_s,   \qquad x \in \mathds R^d.
 \end{equation}   
In this case, the solution of the BSDE is usually  approximated at the points of a time grid  $t_0=0, \ldots,t_n=T$  and involve in particular  the approximation of conditional expectations   $\mathds E(g_{k+1}(X_{t_{k+1}}) \vert X_{t_k})$, where the functions $g_{k}$ are determined in the recursion. The sequence $(X_{t_{k+1}})_{0\le k\le n}$ is either a ``sampling" of the diffusion $X$ at times $(t_k)_{0\le k\le n}$ or, most often, a  discretization scheme  of $(X_{t})_{t  \geq 0}$, typically the discrete time Euler scheme, when the solution of~\eqref{EqXIntro} is not explicit enough to be simulated in an  exact way.  

 In this paper,  we  consider  an  {\em explicit} time discretization scheme where the conditioning is performed {\em inside} the driver $f$ (see also~\cite{IllThese}). It is recursively defined in a backward way as: 
\begin{eqnarray}
   \tilde Y_{t_n} &  = & h(\bar X_{t_n})  \label{IntroEqDiscBSDE1}\\
   \tilde Y_{t_k}  & = & \mathds E( \tilde Y_{t_{k+1}} \vert \mathcal F_{t_k}) +  \Delta_n f \big(t_k, \bar X_{t_k}, \mathds E( \tilde Y_{t_{k+1}}  \vert \mathcal F_{t_k}), \tilde  \zeta_{t_k} \big), \quad k=0, \ldots,n-1,    \qquad\label{IntroEqDiscBSDE2}\\
\mbox{with }\qquad\qquad\tilde \zeta_{t_k} &=&  \frac{1}{\Delta_n} \mathds E\big( \tilde Y_{t_{k+1}} (W_{t_{k+1}}-W_{t_k} )  \vert \mathcal F_{t_k}\big), \quad k=0, \ldots,n-1. \label{Introeq:zeta}
  \end{eqnarray}
  The process  $(\bar X_{t_k})_{k=0, \ldots, n}$  is  the discrete time Euler scheme  of  the diffusion process $(X_t)_{t \in [0,T]}$ with step $\Delta_n = \frac Tn$,  recursively defined  by 
\[
\bar X_{t_{k}}=  \bar X_{t_{k-1}} +  \Delta_nb(t_{k-1},\bar X_{t_{k-1}} )   + \sigma(t_{k-1},\bar X_{t_{k-1}}) (W_{t_{k}} - W_{t_{k-1}}), \; k=1,\ldots,n,\; \bar X_0=x. 
\]
 Under some smooth assumptions  on the coefficients of the diffusions, there exists (see Theorem~\ref{thm:bartilde}  further on for a precise statement, see also~\cite{Zha, BouTou})  that   there is a real constant $\tilde C_{b,\sigma,f,T}>0$ such that, for every $n\ge 1$,  
\[
\max_{ k \in\{0,\ldots, n \}}    \E |   Y_{t_k}-\tilde Y_{t_k} |^2 +\int_0^T   \E   |  Z_t-\tilde Z_t|^2dt   \le  \tilde C_{b,\sigma,f,T} \Delta_n,
 \]
where $\tilde Z = \tilde Z^{(n)}$ comes from the martingale representation of $\sum_{k=1}^n \tilde Y_{t_k} - \mathds E(\tilde Y_{t_k} \vert {\cal F}_{t_{k-1}}).$  

At this stage, since the scheme~\eqref{IntroEqDiscBSDE1}-\eqref{IntroEqDiscBSDE2} involves the computation of  conditional expectations  for which   no analytical   expression is available, its    solution $(\tilde Y, \tilde \zeta)$  has in turn  to be approximated.  A possible approach is to  rely on regression methods involving  the Monte Carlo simulations, see  $e.g.$~\cite{BouTou, GobLemWar,GobLopTurVaz, GobLiuZub}.  Other methods using  on line  Monte Carlo simulations   have been developed  in a Malliavin calculus framework (conditional expectations are ``regularized'' by integration by parts from which ``Malliavin'' weights come out, see~\cite{BouTou, CriManTou, GobTur2, HuNuaSon, Tur}).  Among alternative approaches let us cite the least-squares regression methods, the multistep schemes methods  (see~\cite{BenDen, GobTur}), the primal-dual approach (see~\cite{BenSchZhu}). New approaches have been proposed recently: a combination of Picard iterates and a decomposition in Wiener chaos (see~\cite{BrLa14}), a ``forward" approach in connection with the semi-linear PDE associated to the BSDE (see~\cite{HLTaTo14}), an analytic approach in~\cite{GobPag}.

 In this paper, we go back to the {\em optimal quantization tree}  approach originally  introduced in~\cite{BalPagPri0} (in fact for {\em Reflected} BSDEs) and developed in \cite{BalPag, BalPag1, BalPagPri1}. This approach  is based on an optimally fitting approximation of the Markovian dynamics of the discrete time Markov chain $(\bar X_{t_k})_{0\le k\le n}$ (or a sampling of $X$ at discrete times $(t_{k})_{k=0, \ldots,n}$) with random variables having a finite support. However, we consider a different quantization tree  (or quantized scheme)   defined recursively  by mimicking~\eqref{IntroEqDiscBSDE1}-\eqref{IntroEqDiscBSDE2}  as follows: 
 \begin{eqnarray}
  \hat Y_{t_n} &  = & h(\hat X_{t_n} ) \label{EqhatYkIntro1} \\
  \hat Y_{t_k}  & = & \hat {\mathds E}_k ( \hat Y_{t_{k+1}})  +  \Delta _n   f \big(t_k, \hat X_{t_k}, \hat{\mathds E}_k(\hat Y_{t_{k+1}}), \hat {\zeta}_{t_k} \big)    \label{EqhatYkIntro2}\\
\mbox{with } \hskip 3cm      \hat {\zeta}_{t_k} & = & \frac{1}{\Delta_n} \hat{\mathds E}_k(\hat Y_{t_{k+1}}   \Delta W_{t_{k+1}}), k=0, \ldots, n-1, \hskip 4cm \label{Introeq:zetaquant}
  \end{eqnarray}
where  $\Delta W_{t_{k+1}}= W_{t_{k+1}}-W_{t_k}$,  $\hat{\mathds E}_k = \mathds E (\,\cdot\, \vert \hat X_{t_k})$,   and  $\hat X_{t_k}$     is a quantization of $\bar X_{t_k}$ on a finite  {\em grid} $\Gamma_k\subset \mathbb R^d$,  $i.e.$, $\hat X_{t_k}  = \pi_k(\bar X_{t_k})$, where $\pi_k: \mathds R^d \rightarrow \Gamma_k$ is a  Borel ``projection" on $\Gamma_k$, $k=0, \ldots, n$. At this stage the  function  $\pi_k$ might be any $\Gamma_k$-valued Borel functions. In order to derive better theoretical rates as well as  for practical implementation, we will first consider Borel  {\em nearest neighbor projections} $\pi_k= {\rm Proj}_{\Gamma_k}$ at every time step and then search for grids optimally ``fitting'' the distribution of $X_{_k}$ $i.e.$ minimizing the resulting error $\|X_{t_k}- {\rm Proj}_{\Gamma_k}(X_{t_k})\|_{2}$ among all grids $\Gamma_k$ of a prescribed  size $N_k$, see Section~\ref{SecOptiQuant} for details. This is an {\em explicit inner} scheme in the sense that the conditioning  is performed inside the driver $f$ in contrast with  what is usually done in the literature (where implicit  or {\em outer} explicit  schemes are in force). This scheme, though quite natural,  seems not to have been extensively analyzed (see however~\cite{IllThese} where a first  analysis is carried out  in the spirit of \cite{BalPag, BalPag1}). It turns out to be  well designed  to establish our improved rates and shows  quite satisfactory numerical performances.
Our objective here is two-fold: first  include the $Z$ term in the driver and to dramatically   improve the error bounds in 
\cite{BalPag1, BalPagPri1}, especially its dependence in the size $n+1$ of the time discretization mesh.

So, the question of interest will  be to estimate   the  quadratic  quantization error $(\mathds E \vert \tilde Y_{t_k}  -  \hat Y_{t_k} \vert^2)^{1/2}$  induced by the approximation of $\tilde Y_{t_k}$ by $\hat Y_{t_k}$, for every $k=0, \ldots,n$,  where $\hat Y_{t_k}$ is the quantized version of $\tilde Y_{t_k}$ given by~\eqref{EqhatYkIntro1}-\eqref{EqhatYkIntro2}.    Under more general assumptions than~\cite{BalPag,BalPagPri0}, we show  in Theorem~\ref{TheoremPrincBsde}$(a)$ that, at every step $k$ of the procedure,
\begin{equation} \label{EqControlQuantErrorIntro}
   \big \Vert    \tilde Y_{t_k} - \hat Y_{t_k} \big \Vert _2^2     \leq     \sum_{i=k}^{n}       \widetilde K_i   \big \Vert\bar X_{t_i} -\hat X_{t_i}  \big \Vert_2^2, 
 \end{equation}
for  positive real constants $\widetilde K_i$ depending on  $t_i$ and $T$ and on the regularity of the coefficients of $b, \sigma$ and the driver $f$ which remain bonded as $n\uparrow +\infty$.     The presence of  the squared quadratic norms  on both sides of~\eqref{EqControlQuantErrorIntro} improves the control of the time discretization effect, compared with~\cite{BalPag, BalPagPri0} in which error bounds of the form $  \Vert \tilde Y_{t_k} - \hat Y_{t_k}   \Vert _p     \leq     \sum_{i=k}^{n}         K_i  \Vert\bar X_{t_i} -\hat X_{t_i}  \Vert_p$ are established for $p\!\in [1,+\infty)$. In fact,  we switch  from a global error  (at $t=0$) of order $n\times \max_{0\le k\le n}\Vert X_{t_k} - \hat X_{t_k} \Vert_2$  to   $\sqrt{n}\times\max_{0\le k\le n}\Vert X_{t_k} - \hat X_{t_k} \Vert_2$. This theoretical improvement confirms the results of numerical experiments first carried out in~\cite{BalPagPri1} though it was in a less favorable framework (with reflection) or in ~\cite{BroPagWil, BrPaPo} for American options. 

  For the $Z$ part  which is approximated first by $\tilde{\zeta}$ in \eqref{Introeq:zeta} and whose quantization version $\hat{\zeta}$ is given by \eqref{Introeq:zetaquant}, we get the following approximation error (see Theorem~\ref{TheoremPrincBsde}$(b)$)
\[
\Delta_n \sum_{k=0}^{n-1} \Vert\tilde \zeta_k-\hat \zeta_k\Vert_2^2\le \sum_{k=0}^{n-1}\tilde{K}_k' \Vert \bar X_k-\hat X_k\Vert_2^2+ \sum_{k=0}^{n-1}\Vert \tilde Y_{k+1}- \hat Y_{k+1}\Vert_2^2
\]
where $\widetilde K_i'$  are positive constants depending on  $t_i$ and $T$ and on the regularity of the coefficients of $b, \sigma$ and the driver $f$. So,  we switch from  $n^{\frac 32}\times \max_{0\le k\le n}\Vert \bar X_{t_k} - \hat X_{t_k} \Vert_2$  to $n\times \max_{0\le k\le n}\Vert \bar X_{t_k} - \hat X_{t_k} \Vert_2$.

We notice here that other quantization based discretization schemes have been devised, especially  for {\em Forward-Backward} SDEs (see~\cite{DelMen1}) where  the diffusion and the BSDE are fully coupled (including the $Z$ in the driver) where the grids $\Gamma_k$ are the trace of  $\delta \mathbb{Z}^d$ ($\delta>0$) on an expanding compact as $t_k$ grows. In contrast the Brownian increments are replaced by optimal quantization of the ${\cal N}(0;I_d)$-distribution.  But the obtained resulting error bound for the  scheme are not of the improved from~\eqref{EqControlQuantErrorIntro}. A multistep approach based on two reference ODEs from the computation of conditional expectation has been developed  in a similar framework (coupled and uncoupled) in~\cite{ZhaFuZho}.

In the second part  of the paper, we first propose  (Section~\ref{SecOptiQuant}) a short background on optimal vector quantization, enriched by a new result, namely Theorem~\ref{thm:Mismatchnew}, which essentially solves the-called {\em distortion mismatch} problem.
By distortion mismatch we mean the robustness of optimal quantization  grids. An optimal (quadratic) quantization grid  $\Gamma_N$ {\em at level $N$}  for the distribution of a random vector $X$ is  such that $\|X-{\rm Proj}_{\Gamma_N}(X)\|_2=e_{N,2}(X):=\inf \big\{\|X- q(X)\|_2,\, q:\mathbb{R}^d\to \Gamma, \mbox{Borel},\,\Gamma\subset \mathbb R^d, \, {\rm card}(\Gamma_N)\le N\big\}$ where ${\rm Proj}_{\Gamma_N}$ denotes a (Borel)  nearest neighbor projection on $\Gamma_N$. It exists for every size (or level) $N\ge 1$ as soon as $X\!\in L^2$ and it follows  from  Zador's Theorem  that $e_{N,2}(X)\sim c(X)N^{-\frac 1d}$ as $N\to+\infty$ (see Section~\ref{SecOptiQuant} for  details).  The distortion mismatch property established in Theorem~\ref{thm:Mismatchnew} states that, for every $s\!\in (0,d+2)$,  $\varlimsup_N N^{\frac 1d}\|X-{\rm Proj}_{\Gamma_N}(X)\|_s<+\infty$. This result holds  whenever $X\!\in L^s$ with a distribution satisfying mild additional property. This theorem extends first results established in~\cite{GraLusPag1} for various classes of absolutely continuous distributions. Note that all the above  properties depend on the distribution $\mathbb P_{_X}$ of $X$ rather than on the random vector $X$ itself.
 This robustness property is the key of the  second kind of  improvement  proposed in this paper, this time for quantization based schemes for non-linear filtering investigated in the third part.  In Section~\ref{sec:Numeric} we propose  numerical illustrations using optimal quantization based schemes for various types  of BSDEs  which confirm that the improved rates established in the first part are the true ones.

In this third  part of the paper (Section~\ref{SecNonLinearFilt}), we consider   a (discrete time) nonlinear filtering problem and improve (in the quadratic setting)  the results obtained in~\cite{PagPha}. Firstly, we relax the Lipschitz assumption made on the conditional densities then we provide new improved error bounds for the quantization based scheme introduced in~\cite{PagPha}  to numerically solve a discrete filter by optimal quantization.  

In fact, we consider a discrete time nonlinear filtering problem where the signal process $(X_k)_{k \geq 0}$ is an $\mathds R^d$-valued discrete time Markov process  and the  observation process $(Y_k)_{k \geq 0}$ is an $\mathds R^q$-valued random vector, both  defined  on a probability space $(\Omega, \mathcal A, \mathbb P)$.  The  distribution $\mu$ of $X_0$ is given,  as well as the transition probabilities $P_k(x,dx')= \mathds P(X_{k}\in dx'|X_{k-1}=x)$ of the process $(X_k)_{k \geq 0}$.  We also suppose that  the process  $(X_k,Y_k)_{k \geq 0}$ is a Markov chain and that for every $k \geq 1$, the conditional distribution  of $Y_k,$ given  $(X_{k-1},Y_{k-1},X_k)$ has a density  $g_k(X_{k-1},Y_{k-1},X_k, \cdot)$.
Having  a fixed  observation $Y:=(Y_0,\ldots,Y_n) =(y_0,\ldots,y_n)$, for $n \geq 1$, we aim at computing the conditional  distribution $ \Pi_{y,n}$ of $X_n$ given $Y=(y_0, \ldots,y_n)$. It is well-known that for any bounded and measurable function $f$, $\Pi_{y,n} f $ is given by
the celebrated Kallianpur-Striebel formula (see $e.g.$~\cite{PagPha})
\begin{equation}
\Pi_{y,n} f  =  \frac{\pi_{y,n} f }{ \pi_{y,n} \mbox{\bf 1}}
\end{equation}
where the so-called un-normalized filter $\pi_{y,n}$ is defined for every bounded or non-negative Borel function $f$ by
$$
\pi_{y,n}  f = \mathds E(f(X_n) L_{y,n})
$$
with $$L_{y,n} = \prod_{k=1}^n g_k(X_{k-1},y_{k-1},X_k,y_k).
$$
Defining the family of transition kernels $H_{y,k}$, $k=1,\ldots,n$, by 
\begin{equation} \label{eq:Hyk}
H_{y,k} f(x) = \mathds E\,\big(f(X_k) g_{k}(x,y_{k-1},X_k,y_k)  \vert X_{k-1}=x\big)
\end{equation}
for every bounded or non-negative Borel function $f:\mathds R^d \to \mathds R$ and setting  
$$  
H_{y,0} f(x) = \mathds E(f(X_0)), 
$$
one shows that the un-normalized filter may be computed by the following {\em forward} induction formula: 
\begin{equation} \label{EqForwardInduction}
\pi_{y,k}  f= \pi_{y,k-1} H_{y,k} f, \qquad k=1,\ldots, n,
\end{equation}
with $\pi_{y,0} = H_{y,0}$. A useful formulation, especially   to establish error bound for the quantization based approximate filter is its {\em backward} counterpart 
defined by setting 
$$ \pi_{y,n} f = u_{y,-1} (f)$$
where $u_{y,-1}$ is the final value of the backward recursion:
\begin{equation} \label{EqBackwardInduction}
  u_{y,n}(f)(x)  = f(x), \quad u_{y,k-1}(f) = H_{y,k} u_{y,k} (f), \quad k=0, \ldots,n. 
\end{equation} 
In order to compute the normalized filter $\Pi_{y,n}$, we just have to compute the transition kernels $H_{y,k}$ and to use the recursive formulas~(\ref{EqForwardInduction}) or~(\ref{EqBackwardInduction}). However these kernels have no closed formula in general so that we have to approximate them.  Optimal quantization based algorithms  for non linear filtering has been introduced in~\cite{PagPha} (see also~\cite{PhaRunSel, CalSag, PagPha, Sel} for further developments and contributions).  It turned out to be an efficient alternative approach to particle methods (we refer $e.g.$ to~\cite{DelJacPro} and the references therein which rely  on Monte Carlo simulation of interacting particles)    owing to its tractability. For a survey and comparisons between optimal quantization and particle methods, we refer to~\cite{Sel}). 

 The quantization based approximate filter is designed as follows:  denoting for every $k =0, \ldots,n$ by $\hat{X}_k$ a quantization of  $X_k$ at level $N_k$ by  the grid $\Gamma_k = \{x_k^1, \ldots,x_k^{N_k} \}$,   we will formally replace  $X_k$ in ~(\ref{EqForwardInduction}) or~(\ref{EqBackwardInduction}) by  $\hat X_k$. As a consequence the (optimally) quantized approximation  $\hat{\pi}_{y,n}$ of $\pi_{y,n}$   is    defined  simply by the  quantized  counterpart of the Kallianpur-Striebel formula: we introduce for every bounded or non-negative Borel function $f:\mathds R^d\to \mathds R$ the family of  quantized transition kernels $\hat H_{y,k}$, $k=0,\ldots,n$, by $\hat H_{y,0} f(x) = \mathds E(f(\hat X_0))$ and 
\begin{eqnarray} \label{eq:Hyk}
\hat H_{y,k} f(x_{k-1}^i) &= &\mathds E\big(f(\hat X_k) g_{k}(x^i_{k-1},y_{k-1},\hat X_k,y_k)  \vert X_{k-1}=x_{k-1}^i\big),\quad k=1,\ldots,n. \\
&=&Ê\sum_{j=1}^{N_k} \hat H^{ij}_{y,k} f(x^j_k), \; i=1,\ldots,N_{k-1} \\
\mbox{with} \qquad \hat H^{ij}_{y,k} &=&  g_{k}(x^i_{k-1},y_{k-1};  x^j_k, y_{k})  \,  \hat{p}_k^{ij}\\
\mbox{and } \qquad  \hat{p}_k^{ij}&=&  \mathds P(\widehat X_k= x^j_k\,|\, \hat X_{k-1}= x_{k-1}^i)\quad i=1,\dots,N_{k-1}, j=1,\dots,N_k.
\end{eqnarray}
 Then set
\begin{equation} \label{EqForwardInductionQuant}
\hat{ \pi}_{y,k}  = \hat{\pi}_{y,k-1} \hat{H}_{y,k}, \quad k=1,\ldots, n, \quad\mbox{ and }\quad \hat{\pi}_{y,0}= \hat {H}_{y,0}
\end{equation}
or, equivalently, 
\[
\hat{ \pi}_{y,k}  = \sum_{i=1}^{N_k} \hat \pi^i_{y,k} \delta_{x^i_k}\quad \mbox{ with }\quad \hat \pi^i_{y,k} = \sum_{j=1}^{N_{k-1}}\hat \pi^j_{y, k-1}\hat H^{ij}_{y,k}, \; k=1,\ldots,n
\]
and $\hat \pi_0=\sum_{i=0}^{N_0}  \hat p_0^i\delta_{x^i_0}$ with $\hat p_0^i= \mathds P(\hat X_0= x^j_0)$, $i=1,\ldots, N_0$. 
As a final step, we  approximate the normalized filter $\Pi_{y,n}$ by   $\hat{\Pi}_{y,n}$   given by
\[
\hat \Pi_{y,n} f  =  \frac{\hat \pi_{y,n} f }{\hat  \pi_{y,n} \mbox{\bf 1}}= \sum_{i=1}^{N_n} \hat{\Pi}^i_{y,n} f(x_n^i)\quad \mbox{ with } \quad \displaystyle \hat{\Pi}^i_{y,n}  = \frac{\hat {\pi}_{y,n}^i}{\sum_{j=1}^{N_n}  \hat{\pi}_{y,n}^j}, \; i=1,\dots,N_n.
\]
One shows (see~\cite{PagPha}) that the un-normalized quantized filter may also be computed by the following    {\em backward} induction formula, 
defined by 
$$
\hat  \pi_{y,n} f = \hat u_{y,-1} (f)$$
where $\hat u_{y,-1}$ is the final value of the backward recursion:
\begin{equation} \label{EqBackwardInductionhat}
  \hat u_{y,n}(f)  = f\;\mbox{ on }\Gamma_n, \quad \hat u_{y,k-1}(f) = \hat H_{y,k} \hat u_{y,k} (f)\;\mbox{ on }\Gamma_k, \quad k=0, \ldots,n. 
\end{equation} 

Our aim is then to estimate the quantization error induced by  the approximation of  $\Pi_{y,n}$ by $\hat{\Pi}_{y,n}$. Note that this problem has been considered in~\cite{PagPha} where it has been shown that, for every bounded Borel  function $f$, the absolute error $\vert \Pi_{y,n}f - \hat{\Pi}_{y,n}f  \vert$ is  bounded  (up to a constant depending in particular on $n$) by the cumulated  sum of the $L^r$-quantization errors $\Vert  X_k - \hat {X}_k \Vert_{r}$, $k=0,\ldots,n$.  In this work, we  improve this result in the particular case of the quadratic quantization framework ($i.e.$  $r=2$) in two directions.  In fact, we first  show  that,  for every bounded Borel  function $f$, the squared-absolute error $\vert \Pi_{y,n}f - \hat{\Pi}_{y,n}f  \vert^2$ is  bounded   by the cumulated  square-quadratic quantization errors $\Vert  X_k - \hat {X}_k \Vert_{2}^2$ from $k=0$ to $n$,   similarly to what  we did for BSDEs inducing  a similar  improvement for dependence in $n$ of  the error bounds ($i.e.$ the time discretization step $1/n$ if $(X_k)_{k\ge 0}$ is a discretization step of a diffusion). Once again, this confirms numerical evidences observed in~\cite{PagPha, BalPagPri1}. Secondly, we show that  these  improved error bounds   hold 
under local Lipschitz continuity assumptions on the conditional density functions $g_k$ (instead of Lipschitz conditions in~\cite{PagPha}). The  distortion mismatch property established in Theorem~\ref{thm:Mismatchnew} is the key of this extension.

The paper is divided into three parts. The first part is devoted to the analysis of the optimal quantization error associated to the BSDE algorithm under consideration. We recall first, in Section~\ref{SecDiscretBSDE}, the discretization scheme we consider for the BSDE. Then, in Section~\ref{SecErrorAnalBSDE}, we  investigate the error analysis for  the time discretization and the quantization scheme.  In the second part, some results about optimal quantization are recalled in Section~\ref{SecOptiQuant} and a new distortion mismatch theorem is established about the robustness of $L^r$-optimal quantization in $L^s$ for $s\!\in (r, r+d)$.  Some numerical tests  confirm and illustrate  these improved error bonds in Section~\ref{sec:Numeric}.  The final part, Section~\ref{SecNonLinearFilt}, is devoted to the nonlinear filtering problem analysis when estimating the nonlinear filter by optimal quantization with new improved error bounds obtained under less than stringent~--~local~--~Lipschitz assumptions than in the existing literature. 

\noindent {\sc Notations:}  $\bullet$ $|\,.\,|$ denotes the canonical Euclidean norm on $\mathbb R^d$.

\smallskip 
\noindent $\bullet$ For every $f:\mathbb R^d \to \mathbb R$, set $\|f\|_{\infty} = \sup_{x\in \mathbb R^d} |f(x)|$ and $[f]_{\rm Lip}= \sup_{x\neq y}\frac{|f(x)-f(y)|}{|x-y|}
\le +\infty$. 

\smallskip
\noindent $\bullet$ If $A\!\in {\cal M}(d,q)$ we define the Fr\"obenius norm of $A$ by $\| A\|= \sqrt{{\rm Tr}(AA^*)}$. 


\section{Discretization of  the BSDE}  \label{SecDiscretBSDE}

 
 Let  $(W_t)_{t \geq 0}$ be a $q$-dimensional Brownian motion defined on a probability space $(\Omega, \mathcal A, \mathds P)$ and let  $(\mathcal F_t)_{t \geq 0}$ be its  augmented natural filtration.  We consider the following  stochastic differential equation:
 \begin{equation}\label{eq:diffusion}
 X_t  = x  +  \int_0^t b(s,X_s)ds  + \int_0^t \sigma(s,X_s)  dW_s, 
 \end{equation}
 where the drift coefficient  $b:[0,T] \times \mathbb R^d \to \mathbb R^d$ and the matrix diffusion coefficient  $\sigma:[0,T] \times \mathbb R^d \to{\cal M}(d,q)$ are Lipschitz continuous in $(t,x)$. For a fixed horizon (the maturity) $T>0$,  we consider the following  Markovian Backward Stochastic Differential Equation (BSDE):
 \begin{equation}   \label{EqBSDE}
 Y_t  = h(X_T)  + \int_t^T f(s,X_s,Y_s,Z_s) ds - \int_t^T Z_s \cdot dW_s,\quad t \in [0,T], 
 \end{equation}
 where the function $h: \mathds R^d \rightarrow \mathds R$ is $[h]_{\rm Lip}$-Lipschitz continuous, the   driver  $f(t,x,y,z) : [0,T]{\small \times} \mathds R^d $  ${\small \times} \mathds R {\small \times \mathds R^q} \rightarrow \mathds R$  is  Lipschitz continuous with respect to $(x, y,z)$,  uniformly in   $t\!\in[0,T]$, $i.e.$ satisfies
 \begin{eqnarray}  \label{AssLipschitzGenerator}
({\rm Lip}_f)\; \equiv\;  \vert  f(t,x,y,z)  - f(t,x',y',z')  \vert  &  \leq &   [f]_{\rm Lip} ( \vert x -x' \vert + \vert  y -y'  \vert +  \vert  z-z' \vert ).   \label{AsLipsch_f}
 \end{eqnarray}
 
Under the previous assumptions  on  $b,  \sigma, h, f$,  the  BSDE~\eqref{EqBSDE} has a unique   $\mathds R {\small \times} \mathds R^q$-valued, ${\cal F}_t$-adapted solution $(Y,Z)$  satisfying (see~\cite{ParPen}, see also~\cite{MaYon})
 \[
 \E \Big(\sup_{t\in[0,T]}|Y_t|^2 +\int_0^T |Z_s|^2ds\Big)<+\infty.
 \]

Let us consider now $(\bar X_{t_k})_{k=0, \ldots, n}$,  the discrete time Euler scheme  with step $\Delta_n=\frac Tn$ of  the diffusion process $(X_t)_{t \in [0,T]}$:
\[
\bar X_{t_{k}}=  \bar X_{t_{k-1}} +  \Delta_nb(t_{k-1},\bar X_{t_{k-1}} )   + \sigma(t_{k-1},\bar X_{t_{k-1}}) (W_{t_{k}} - W_{t_{k-1}}), \; k=1,\ldots,n,\; \bar X_0=x
\]
  where  
$t_k : = \frac{k T}{n}$, $k=0, \ldots, n$ and its continuous time counterpart, sometimes called {\em genuine Euler scheme}  (we drop the dependence in $n$ when no ambiguity)
defined as an It\^o process by 
\begin{equation}\label{eq:EulerCont}
d\bar X_t =  b(\underline t, \bar X_{\underline t}) dt+\sigma(\underline t,\bar X_{\underline t})dW_t, \quad \bar X_0=x 
\end{equation}
where $\underline t = \frac {kT}{n}$ when  $t\!\in [t_k,t_{k+1})$. In particular $(\bar X_t)_{t\in [0,T]}$ is an ${\cal F}_t$-adapted It\^o  process  satisfying under the above assumptions made on $b$ and $\sigma$  (see~$e.g.$~\cite{BouLep}) :
\[
\forall\, p \in (0,+\infty),\qquad\Big\|\sup_{t\in[0,T]}|X_t|\Big\|_{p}+ \sup_{n\ge 1} \Big\|\sup_{t\in[0,T]}|\bar X^n_t|\Big\|_{p} \le C_{b,\sigma,p,T}\big(1+|x|\big)
\]
and
\[
\forall\, p\!\in (0,+\infty),\; \forall\, n\ge 1, \qquad\Big\|\sup_{t\in[0,T]}|X_t-\bar X^n_t|\Big\|_{p}  \le C_{b,\sigma,p,T} \sqrt{\Delta_n}\big(1+|x|\big)
\]
for a positive constant   $C_{b,\sigma,p,T}$. 

As a consequence, general existence-uniqueness results for BSDEs entail (see~\cite{ParPen1}) the existence of  a unique solution  $(\bar Y,\bar Z)$ to the Markovian BSDE having  the genuine Euler scheme $\bar X$ instead of $X$ as a forward process.
%
Then, we can apply the classical comparison result (Proposition~2.1 from~\cite{ElkPenQue}) with 
$f^1(\omega, t, y,z)= f(t,\bar X_t(\omega), y,z)$ and $f^2(\omega, t, x,y,z)= f( t, X_t(\omega),y,z)$ which immediately yields the existence of  real constants $C^{(i)}_{b,\sigma, f,T}>0$, $i=1,2$,  such that 
\begin{eqnarray*}
  \E \Big[\sup_{t\in[0,T]}|Y_t-\bar Y_t|^2 \hskip -0.1cm+ \hskip -0.1cm \int_0^T |Z_t-\bar Z_t|^2dt\Big] &\le& C^{(1)} \Big[ \E \big(h(X_{_T})-h(\bar X_{_T})\big)^2 
 \hskip -0.1cm+ \hskip -0.1cm [f]^2_{\rm Lip}\E \int_0^T\hskip -0.25cm  |X_t-\bar X^n_t|^2 dt\Big]\\
&\le&   C^{(2)}_{b,\sigma,f,T} \Delta_n.
\end{eqnarray*}

Unfortunately, at this stage,  the couple $(\bar Y_t,\bar Z_t)_{t\in [0,T]}$ is  still ``intractable"  for numerical purposes (it satisfies no Dynamic Programming Principle due to its continuous time nature and there is no possible exact simulation, etc). This is mainly due to $\bar Z$ about  which little is known. By contrast with $Z$ which is $e.g.$ closely connected to a PDE. So we will need to go deeper in the time discretization, by discretizing the $Z$ term itself. 
Consequently, we need to perform a second  time discretization on the Euler scheme based BSDE, 
only involving discrete instants $t_k$, $k=0,\ldots,n$.

 We consider  an {\em explicit inner} scheme  recursively defined in a backward way as follows: 
\begin{eqnarray}
   \tilde Y_{t_n} &  = & h(\bar X_{t_n})  \label{EqDiscBSDE1}\\
   \tilde Y_{t_k}  & = & \mathds E( \tilde Y_{t_{k+1}} \vert \mathcal F_{t_k}) + \Delta_n f \big(t_k, \bar X_{t_k}, \mathds E( \tilde Y_{t_{k+1}}  \vert \mathcal F_{t_k}), \tilde  \zeta_{t_k} \big)    \label{EqDiscBSDE2}\\
\tilde \zeta_{t_k}& =&  \frac{1}{\Delta_n} \mathds E\big( \tilde Y_{t_{k+1}} (W_{t_{k+1}}-W_{t_k} )  \vert \mathcal F_{t_k}\big), \ k=0, \ldots,n-1 \label{eq:zeta}
  \end{eqnarray}
 It slightly differs from the other explicit schemes analyzed in the literature to our knowledge, since the conditioning is applied directly to $ \tilde Y_{t_{k+1}}$ {\em  inside the driver function} rather than outside. Note that in many situations, one uses the following  more symmetric alternative formula
\[
\tilde \zeta_{t_k} =  \frac{1}{\Delta_n} \mathds E\big( (\tilde Y_{t_{k+1}}-\tilde Y_{t_k}) (W_{t_{k+1}}-W_{t_k} )  \vert \mathcal F_{t_k}\big), 
\]
which is clearly quite  natural when thinking of a hedging term   as a derivative ($e.g.$ computed in a binomial tree). It   also has  virtues in terms of variance reduction (see~$e.g.$~\cite{BalPagPri1}). One easily shows  by a backward induction that, for every $k\!\in \{0,\ldots,n\}$,    $\tilde  Y_{t_k}\!\in L^2(\Omega,{\cal A}, \PP)$ since $\sup_{t\in[0,T]}|\bar X_{t}|\!\in L^2(\PP)$.

%

Our first aim is to adapt standard comparison theorems to compare the above purely  discrete  scheme $(\tilde Y_{t_k},\tilde Z_{t_k})$ with the original BSDE to derive  error bounds similar to those recalled above  between $(Y, Z)$ and $(\bar Y, \bar Z)$. To this end, like for the Euler scheme, we need  to extend $\tilde Y$ into a continuous time process by an appropriate interpolation.  We proceed as follows: let
\[
M_{_T} =\sum_{k=1}^n\tilde Y_{t_k} -\E\big(\tilde Y_{t_k}\,|\,{\cal F}_{t_{k-1}}\big).
\]
This random variable is in $L^2(\PP)$. Hence, by the  martingale  representation theorem, there exists an $({\cal F}_t)$-progressively measurable $\tilde Z\!\in L^2([0,T]\times \Omega, \PP\otimes dt)$ such that 
\[
M_{_T} = \int_0^T\tilde Z_t\,dW_t.
\]
Then $\displaystyle \tilde Y_{t_k} -\E \big(\tilde Y_{t_k}\,|\,{\cal F}_{t_{k-1}} \big)= \int_{t_{k-1}}^{t_k}\tilde Z_s\,dW_s$. In particular
\[
\tilde \zeta_{t_k} =\frac{1}{\Delta_n }  \E \big(\tilde Y_{t_{k+1}}(W_{t_{k+1}}-W_{t_{k}})\,|\, {\cal F}_{t_{k}}\big)= \frac{1}{\Delta_n }\E\Big(\int_{t_k}^{t_{k+1}}\tilde Z_s \,ds\,|\, {\cal F}_{t_k}\Big), \ k=0, \ldots,n-1,
\] 
so that we may  define a continuous extension of $(\tilde Y_{t_k})_{0\le k\le n}$ as follows:
\begin{equation} \label{EqYtildeZtildeIntro}
\tilde Y_t   =   \tilde Y_{t_{k}} - (t-t_{k})f\big(t_{k}, \bar X_{t_{k}},\mathds E( \tilde Y_{t_{k+1}}  \vert \mathcal F_{t_k}),\tilde \zeta_{t_{k}} \big)+ \int_{t_{k}}^{t} \tilde  Z_sdW_s, \quad t\in [t_{k},t_{k+1}].
\end{equation}

\section{Error analysis}  \label{SecErrorAnalBSDE}

\subsection{The time discretization error}
We provide in the theorem below the quadratic  error bound for the  inside explicit time discretization scheme $(\tilde Y,\tilde Z)$ defined by~\eqref{EqDiscBSDE1}-\eqref{eq:zeta} and~\eqref{EqYtildeZtildeIntro}.     
Claim $(b)$ comes from~\cite{Zha}. The detailed proof  of Claim $(a)$ is given in the appendix.   Like for most  results of this type,  the  proof of $(a)$ follows the lines of that devised for comparison theorems in~\cite{ElkPenQue}. In particular, though slightly more technical at some places,  it is close to its counterpart for the standard outer explicit scheme originally established in~\cite{BalPag1}  (in $L^p$ for reflected BSDEs, but without $Z$ on the driver) or in~\cite{Zha} (in the quadratic case, see also~\cite{GobLab} for an extension  error bounds in $L^p$  or~\cite{BouTou} for implicit scheme).


\begin{thm}\label{thm:bartilde}$(a)$ Assume the functions    $f:[0,T]\times \mathds R\times \mathds R^d \times \mathds R^q\to \mathds R$ is Lipschitz continuous in $(t,x,y,z)$ and that  $h:\mathbb R^d\to \mathbb R^d$ is  Lipschitz continuous.
 Then, there exists a real constant $C_{b,\sigma,f,T}>0$ such that, for every $n\ge 1$,  
\[
\max_{ k  =  0,\ldots, n }    \E |   Y_{t_k}-\tilde Y_{t_k} |^2 +\int_0^T   \E   |  Z_t-\tilde Z_t|^2dt \le  C_{b,\sigma,f,T}\left( \Delta_n+\int_0^T \E |Z_s- Z_{\underline s}|^2ds\right).
\]
where $\underline s = t_k$ if $s\!\in [t_k, t_{k+1})$.

\medskip
\noindent $(b)$  Assume that the functions $b, \sigma,  f$  are continuously differentiable in  their spatial variables ($x$ and $(x,y,z)$ respectively) with bounded partial derivatives  and $\frac 12$-H\"older continuous  with respect to $t$, that $d=q$ and $\sigma \sigma^{\star}$ is uniformly elliptic. Assume $h$ is  Lipschitz continuous.
Then, the process $(Z_t)_{t\in [0,T]}$ admits a c\`adl\`ag modification and 
\begin{equation}
\int_0^T \E |Z_s-Z_{\underline s}|^2ds\le C'_{b,\sigma,f,T} \Delta_n,
\end{equation}
so that there exists a real constant $\tilde C_{b,\sigma,f,T}>0$ such that, for every $n\ge 1$,  
\begin{equation}
\max_{ k = 0,\ldots, n }    \E |   Y_{t_k}-\tilde Y_{t_k} |^2 +\int_0^T   \E   |  Z_t-\tilde Z_t|^2dt   \le  \tilde C_{b,\sigma,f,T} \Delta_n .
 \end{equation}

\end{thm}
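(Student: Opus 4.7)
The plan is to insert the BSDE $(\bar Y,\bar Z)$ attached to the Euler forward process as an intermediate bridge: Section~\ref{SecDiscretBSDE} already provides $\E\sup_{t\in[0,T]}|Y_t-\bar Y_t|^2+\int_0^T\E|Z_s-\bar Z_s|^2 ds\le C\Delta_n$, so by the triangle inequality it suffices to establish the announced estimate for $(\bar Y,\bar Z)-(\tilde Y,\tilde Z)$. Claim $(b)$ is then immediate from $(a)$: under the ellipticity-smoothness hypotheses, the representation $Z_t=\nabla u(t,X_t)\sigma(t,X_t)$ with $u\!\in C^{1,2}$ the solution of the associated semilinear PDE yields the $L^2$-time regularity $\int_0^T\E|Z_s-Z_{\underline s}|^2 ds\le C\Delta_n$, which I would simply invoke from~\cite{Zha}.

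For the core comparison, I would run an It\^o argument on each subinterval $[t_k,t_{k+1}]$, on which
\[
d\bar Y_s=-f(s,\bar X_s,\bar Y_s,\bar Z_s)\,ds+\bar Z_s\,dW_s,\qquad d\tilde Y_s=-\tilde f_k\,ds+\tilde Z_s\,dW_s,
\]
with $\tilde f_k:=f(t_k,\bar X_{t_k},\E(\tilde Y_{t_{k+1}}|\mathcal F_{t_k}),\tilde\zeta_{t_k})$ constant in $s$. Applying It\^o to $|\bar Y-\tilde Y|^2$ between $t_k$ and $t_{k+1}$ and taking expectation kills the martingale and yields an identity for $\E|\bar Y_{t_k}-\tilde Y_{t_k}|^2+\int_{t_k}^{t_{k+1}}\E|\bar Z_s-\tilde Z_s|^2 ds$ in terms of $\E|\bar Y_{t_{k+1}}-\tilde Y_{t_{k+1}}|^2$ plus a cross term involving $f(s,\bar X_s,\bar Y_s,\bar Z_s)-\tilde f_k$. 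Combining Lipschitz continuity of $f$ with Young's inequality (with a small parameter $\beta>0$ to be tuned) reduces the cross term to $\tfrac{1}{\beta}\E|\bar Y_s-\tilde Y_s|^2$ plus a constant multiple of $\beta$ times the squared $L^2$-moduli of $|\bar X_s-\bar X_{t_k}|$, $|\bar Y_s-\E(\tilde Y_{t_{k+1}}|\mathcal F_{t_k})|$ and $|\bar Z_s-\tilde\zeta_{t_k}|$. The $X$-modulus is $O(\sqrt{\Delta_n})$ by standard Euler bounds, and the $Y$-modulus is split as $|\bar Y_s-\bar Y_{t_k}|+|\bar Y_{t_k}-\tilde Y_{t_k}|+|\tilde Y_{t_k}-\E(\tilde Y_{t_{k+1}}|\mathcal F_{t_k})|$, the three pieces being controlled respectively by the $\sqrt{\Delta_n}$-regularity of the BSDE path $\bar Y$, the running error $e_k:=\E|\bar Y_{t_k}-\tilde Y_{t_k}|^2$, and the explicit identity $\tilde Y_{t_k}-\E(\tilde Y_{t_{k+1}}|\mathcal F_{t_k})=\Delta_n\tilde f_k=O(\Delta_n)$.

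The principal obstacle is the $Z$-modulus $|\bar Z_s-\tilde\zeta_{t_k}|$. Here I would exploit the defining identity $\tilde\zeta_{t_k}=\tfrac{1}{\Delta_n}\E(\int_{t_k}^{t_{k+1}}\tilde Z_s\,ds\,|\,\mathcal F_{t_k})$, which makes $\tilde\zeta_{t_k}$ the $L^2$-orthogonal projection of $\tilde Z$ onto $\mathcal F_{t_k}$-measurable constants over the interval. Introducing the analogue $Z^*_{t_k}:=\tfrac{1}{\Delta_n}\E(\int_{t_k}^{t_{k+1}}Z_u\,du\,|\,\mathcal F_{t_k})$, Pythagoras gives $\int_{t_k}^{t_{k+1}}\E|Z_s-Z^*_{t_k}|^2 ds\le \int_{t_k}^{t_{k+1}}\E|Z_s-Z_{\underline s}|^2 ds$, while Jensen applied to $\tilde\zeta_{t_k}-Z^*_{t_k}=\tfrac{1}{\Delta_n}\E(\int_{t_k}^{t_{k+1}}(\tilde Z_s-Z_s)\,ds\,|\,\mathcal F_{t_k})$ yields $\Delta_n\E|\tilde\zeta_{t_k}-Z^*_{t_k}|^2\le \int_{t_k}^{t_{k+1}}\E|\tilde Z_s-Z_s|^2 ds$. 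Splitting $\tilde Z_s-Z_s=(\tilde Z_s-\bar Z_s)+(\bar Z_s-Z_s)$ then converts this into a small absorbable fraction of $\int_{t_k}^{t_{k+1}}\E|\bar Z_s-\tilde Z_s|^2 ds$ plus a further $O(\Delta_n)$ piece coming from $\int\E|Z_s-\bar Z_s|^2 ds$.

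Reinjecting everything and choosing $\beta$ small enough to absorb the remaining $\int\E|\bar Z_s-\tilde Z_s|^2 ds$ contribution into the left-hand side produces a backward recurrence
\[
e_k+\int_{t_k}^{t_{k+1}}\E|\bar Z_s-\tilde Z_s|^2 ds\le (1+C\Delta_n)\,e_{k+1}+C\Delta_n^2+C\int_{t_k}^{t_{k+1}}\E|Z_s-Z_{\underline s}|^2 ds.
\]
Since $e_n=0$ (both terminal values equal $h(\bar X_{t_n})$), a discrete Gronwall step applied to this recurrence yields $\max_k e_k+\int_0^T\E|\bar Z_s-\tilde Z_s|^2 ds\le C\bigl(\Delta_n+\int_0^T\E|Z_s-Z_{\underline s}|^2 ds\bigr)$, which gives $(a)$ after the triangle inequality; $(b)$ then follows at once from the cited time regularity of $Z$.
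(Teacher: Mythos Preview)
Your route is genuinely different from the paper's and, with one missing ingredient supplied, it works. The paper does \emph{not} pass through $(\bar Y,\bar Z)$: it compares $(Y,Z)$ directly to $(\tilde Y,\tilde Z)$ via It\^o's formula applied to $e^{\alpha t}|Y_t-\tilde Y_t|^2$ on the whole of $[t,T]$, where the exponential weight with large $\alpha$ absorbs the running $\int e^{\alpha s}\E|Y_s-\tilde Y_s|^2 ds$ contribution. Your interval-by-interval It\^o plus discrete Gronwall is a legitimate alternative (closer in spirit to Zhang and Bouchard--Touzi), and using $(\bar Y,\bar Z)$ as a bridge is a clean way to recycle the estimate already recalled in Section~\ref{SecDiscretBSDE}; it also slightly simplifies the $X$-modulus to pure Euler time-regularity $|\bar X_s-\bar X_{t_k}|$.

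There is, however, a real gap. Your claim ``$\tilde Y_{t_k}-\E(\tilde Y_{t_{k+1}}|\mathcal F_{t_k})=\Delta_n\tilde f_k=O(\Delta_n)$'' presupposes that $\E|\tilde f_k|^2$ is bounded \emph{uniformly in $n$}, which in turn requires $\sup_{n,k}\E|\tilde Y_{t_k}|^2<\infty$ and $\sup_n\int_0^T\E|\tilde Z_s|^2 ds<\infty$ (the latter controls $\E|\tilde\zeta_{t_k}|^2$ via Jensen). These a~priori bounds are not free: the backward induction only gives $\tilde Y_{t_k}\in L^2$ with constants that \emph{a priori} blow up in $n$. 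The paper devotes its entire Step~1 to proving them (a separate It\^o/Young argument on $e^{\alpha t}\tilde Y_t^2$), and Step~2 then extracts the time-regularity $\E|\tilde Y_t-\tilde Y_{\underline t}|^2\le C\Delta_n$. You need the analogue before your recurrence makes sense.

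A secondary point: once $\beta$ is chosen small enough to absorb the $\int\E|\bar Z_s-\tilde Z_s|^2 ds$ fraction coming from the $Z$-modulus, the term $\tfrac{1}{\beta}\int_{t_k}^{t_{k+1}}\E|\bar Y_s-\tilde Y_s|^2 ds$ carries a \emph{large} constant, and you have not said how it enters the recurrence. This is fixable---either run a short continuous-time Gronwall on $t\mapsto\E|\bar Y_t-\tilde Y_t|^2$ inside each $[t_k,t_{k+1}]$, or adopt the paper's exponential weight---but it is not automatic and should be spelled out. Part~$(b)$ is handled exactly as you say, by citation to~\cite{Zha}.
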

 
Claim~$(b)$ as stated comes from ~\cite{Zha} (Theorem~3.1 and Lemma~2.5$(i)$). It admits   several variants (see~\cite{BouTou}) or  improvements in the literature. Thus, 
  in~\cite{GobMak} (Theorem 20) it is obtained under a still lighter assumption on the terminal value $h$ of fractional type, provided this time $b$ and $\sigma$ are bounded, ${\cal C}^{2+\gamma}$, $\gamma\!\in (0,1]$, in space, uniformly $\frac 12$-H\" older in time (and $\sigma$ uniformly elliptic) and the driver $f$ is in ${\cal C}^1$ in its spatial variable with bounded partial derivatives and $f(., 0,0,0)\!\in L^1([0,T],dt)$.  Then, the resulting bound is $O\big(\Delta _n^{\gamma}\big)$ for a uniform mesh (but can attain $O\big(\Delta_n\big)$ for a tailored mesh). See also~\cite{DelMen} for a PDE based  extension to general forward-backward system $i.e.$ when the both forward and backward equations are  coupled.

\smallskip
\noindent {\sc Notations (change of).} The previous schemes~\eqref{EqDiscBSDE1}-\eqref{EqDiscBSDE2} involve some quantities and operators which will be the core of  what follows and are of discrete time nature. So, in order to simplify the proofs and alleviate  the notations, we will identify every time step $t_k^n$ by $k$ and we will denote $\mathds E_k = \mathds E(\,\cdot\, \vert \mathcal F_{t_k})$. Thus,   we will  switch to 
 \begin{eqnarray*}
\bar X_k:=\bar X_{t_k},& \tilde Y_k := \tilde Y_{t_k},& f_k(x,y,z) = f(t_k, x,y,z).
\end{eqnarray*}


\subsection{Error bound for the quantization scheme}
In this section,  we consider  the quantization scheme ~\eqref{EqhatYkIntro1}-\eqref{EqhatYkIntro2} and  compute  the  quadratic  quantization error $(\mathds E \vert \tilde Y_{t_k}  -  \hat Y_{t_k} \vert^2)^{1/2}$  induced by the approximation of $\tilde Y_{t_k}$ by $\hat Y_{t_k}$, for every $k=0, \ldots,n$. 
This leads to the following theorem which  is the first main result of this paper.

   \begin{thm}  \label{TheoremPrincBsde} 
Assume that the drift $b$ and the diffusion coefficient    $\sigma$ of the diffusion $(X_t)_{t\in [0,T]}$ defined by~\eqref{eq:diffusion} are Lipschitz continuous, that the driver function $f$ satisfies $({\rm Lip}_f)$ (Assumption(\ref{AssLipschitzGenerator})) and that   the function $h$ is $[h]_{\rm Lip}$-Lipschitz  continuous. Assume that $n\ge n_0$ (in order to provide sharper constants depending on $n_0\ge 1$).

\smallskip 
\noindent $(a)$ For every $k =0, \ldots, n$,
 
\begin{equation} \label{Eq1TheoremPrincBsde}
   \big \Vert    \tilde Y_k - \hat Y_k \big \Vert _2^2     \leq     \sum_{i=k}^{n}  e^{(1+ [f]_{\rm Lip})(t_i  -t_k)}        K_i(b,\sigma,T,f)   \big \Vert\bar X_i -\hat X_i  \big \Vert_2^2, 
 \end{equation}
 where $K_n(b,\sigma,T,f) := [h]_{\rm Lip}^2$ and, for every $k=0,\ldots,n-1$, 
\begin{equation*}
K_k(b,\sigma,T,f) :=\kappa_1^2e^{2\kappa_0(T-t_k)}  +\big(1+\Delta_{n_0}\big)\big( C_{1,k}(b,\sigma,T,f) \Delta_{n_0} + C_{2,k}(b,\sigma,T,f) \big),
\end{equation*}
with
$\displaystyle \kappa_0=C_{b,\sigma,T}+ [f]_{\rm Lip}\Big(1+\frac{[f]_{\rm Lip}}{2}\Big)$, $\displaystyle \kappa_1= \displaystyle \frac{[f]_{\rm Lip}}{\kappa_0}+ [h]_{\rm Lip}$, \\
$\displaystyle C_{2,k}(b,\sigma,T,f) = q\kappa_1^2  [f]_{\rm Lip}^2 e^{2 \Delta_{n_0} C_{b,\sigma,T} + 2 \kappa_0(T-t_{k+1})} $
 and $\displaystyle C_{1,k}(b,\sigma,T,f)=  [f]_{\rm Lip}^2+ \frac{C_{2,k}(b,\sigma,T,f)}{q} $
and 
\begin{equation}\label{eq:CbsigmaT}C_{b,\sigma,T}=[b]_{\rm Lip} +\frac 12\Big([\sigma]^2_{\rm Lip}+ \frac{T}{n_0} [b]_{\rm Lip}^2\Big).
\end{equation}

\smallskip
\noindent $(b)$ For every $k =0, \ldots, n$,
 \[
 \Delta_n \sum_{k=0}^{n-1} \Vert\tilde \zeta_k-\hat \zeta_k\Vert_2^2\le \sum_{k=0}^{n-1} \frac{C_{2,k}(b,\sigma, T,f)}{[f]^2_{\rm Lip}} \Vert \bar X_k-\hat X_k\Vert_2^2+ \sum_{k=0}^{n-1}\Vert \tilde Y_{k+1}- \hat Y_{k+1}\Vert_2^2.
 \]
 \end{thm}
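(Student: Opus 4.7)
Both parts hinge on two observations combined through a Pythagoras-like orthogonality. Firstly, by the Markov property of the Euler scheme, $\tilde Y_k = \tilde u_k(\bar X_k)$ and $\Delta_n \tilde\zeta_k = \tilde v_k(\bar X_k)$ for some Borel maps $\tilde u_k,\tilde v_k$, and one establishes by an auxiliary backward induction (combining $({\rm Lip}_f)$ with the Lipschitz flow estimate of the Euler scheme, which is where $C_{b,\sigma,T}$ in \eqref{eq:CbsigmaT} arises) that $[\tilde u_k]_{\rm Lip}\le \kappa_1 e^{\kappa_0(T-t_k)}$ and that $[\tilde v_k]_{\rm Lip}^2\le q\Delta_n\,[\tilde u_{k+1}]_{\rm Lip}^2$ (from $\mathds E|\Delta W_{k+1}|^2=q\Delta_n$). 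Secondly, for $\xi\in L^2(\mathcal F_{t_k})$ and $\eta\in L^2$,
\[
\mathds E_k(\xi)-\hat{\mathds E}_k(\eta) = \bigl[\xi-\hat{\mathds E}_k(\xi)\bigr]+\hat{\mathds E}_k(\xi-\eta),
\]
and the two summands are $L^2$-orthogonal: the first is the residual of the projection of $\xi$ onto $L^2(\sigma(\hat X_k))$, the second lies in $L^2(\sigma(\hat X_k))$. Thus their squared norms add; moreover, since $\hat{\mathds E}_k(\xi)$ is the best $\sigma(\hat X_k)$-measurable $L^2$-approximation of $\xi=\varphi(\bar X_k)$, one has $\|\xi-\hat{\mathds E}_k(\xi)\|_2\le [\varphi]_{\rm Lip}\,\|\bar X_k-\hat X_k\|_2$.

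\textbf{Part (a)} proceeds by backward induction on $k$. At $k=n$, $\|\tilde Y_n-\hat Y_n\|_2^2\le [h]_{\rm Lip}^2\|\bar X_n-\hat X_n\|_2^2$ matches $K_n=[h]_{\rm Lip}^2$. For $k<n$, subtract \eqref{EqhatYkIntro2} from \eqref{EqDiscBSDE2}, apply $({\rm Lip}_f)$, and use Young's inequality with a parameter of order $\Delta_n$ so that the coefficient in front of $\|\mathds E_k(\tilde Y_{k+1})-\hat{\mathds E}_k(\hat Y_{k+1})\|_2^2$ reads $1+c\Delta_n$, while the remaining terms (multiples of $\|\bar X_k-\hat X_k\|_2^2$ and of $\Delta_n\,[f]_{\rm Lip}^2\|\tilde\zeta_k-\hat\zeta_k\|_2^2$) are of order $\Delta_n$. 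Expand $\|\mathds E_k(\tilde Y_{k+1})-\hat{\mathds E}_k(\hat Y_{k+1})\|_2^2$ via the Pythagoras identity applied to $\xi=\mathds E_k(\tilde Y_{k+1})$, which is a Lipschitz function of $\bar X_k$ with constant $\le [\tilde u_{k+1}]_{\rm Lip}e^{C_{b,\sigma,T}\Delta_n}$ by the Euler semigroup estimate, and $\eta=\hat Y_{k+1}$; this yields a clean split into $(\text{Lipschitz const})^2\|\bar X_k-\hat X_k\|_2^2$ plus $\|\tilde Y_{k+1}-\hat Y_{k+1}\|_2^2$, the latter from Jensen applied to $\hat{\mathds E}_k(\tilde Y_{k+1}-\hat Y_{k+1})$. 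Inject the induction hypothesis for $\|\tilde Y_{k+1}-\hat Y_{k+1}\|_2^2$ and the bound from part~(b) for the $\tilde\zeta_k$ contribution, and unfold the recursion via a discrete Gronwall argument to produce the telescoped sum and the exponential $e^{(1+[f]_{\rm Lip})(t_i-t_k)}$ appearing in \eqref{Eq1TheoremPrincBsde}.

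\textbf{Part (b)} is handled pointwise. Writing
\[
\Delta_n(\tilde\zeta_k-\hat\zeta_k) = \bigl(\mathds E_k-\hat{\mathds E}_k\bigr)\bigl(\tilde Y_{k+1}\Delta W_{k+1}\bigr) + \hat{\mathds E}_k\bigl((\tilde Y_{k+1}-\hat Y_{k+1})\Delta W_{k+1}\bigr),
\]
the Pythagoras identity applied with $\xi=\mathds E_k(\tilde Y_{k+1}\Delta W_{k+1})=\tilde v_k(\bar X_k)\in L^2(\mathcal F_{t_k})$ makes the two summands $L^2$-orthogonal. The first term's $L^2$-norm is bounded by $[\tilde v_k]_{\rm Lip}\|\bar X_k-\hat X_k\|_2\le \sqrt{q\Delta_n}\,[\tilde u_{k+1}]_{\rm Lip}\|\bar X_k-\hat X_k\|_2$, which after the global $1/\Delta_n^2$ rescaling and multiplication by $\Delta_n$ contributes $q\kappa_1^2 e^{2\kappa_0(T-t_{k+1})}\|\bar X_k-\hat X_k\|_2^2 = C_{2,k}/[f]_{\rm Lip}^2\,\|\bar X_k-\hat X_k\|_2^2$, with the $e^{2\Delta_{n_0}C_{b,\sigma,T}}$ factor in $C_{2,k}$ absorbing the one-step Lipschitz propagation through the Euler flow. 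The second term is controlled via the conditional Cauchy--Schwarz $|\hat{\mathds E}_k(X\Delta W_{k+1})|^2\le q\Delta_n\,\hat{\mathds E}_k(X^2)$ (using the independence of $\Delta W_{k+1}$ from $\hat X_k$), which after the same rescaling gives a multiple of $\|\tilde Y_{k+1}-\hat Y_{k+1}\|_2^2$; summation over $k$ concludes. The main obstacle is the clean propagation of $[\tilde u_k]_{\rm Lip}\le \kappa_1 e^{\kappa_0(T-t_k)}$: this requires a self-contained backward induction mixing the Euler Lipschitz flow bound (source of $C_{b,\sigma,T}$) with the driver contribution (source of $[f]_{\rm Lip}(1+[f]_{\rm Lip}/2)$ in $\kappa_0$), and the Young parameters in part~(a) must be tuned so that the per-step inflation equals $1+\kappa\Delta_n$, producing, after the backward unfolding, the factor $(1+\kappa\Delta_n)^{i-k}\le e^{\kappa(t_i-t_k)}$.
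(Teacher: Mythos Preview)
Your overall architecture---Markov representation $\tilde Y_k=\tilde u_k(\bar X_k)$, Pythagoras orthogonality for $\xi-\hat{\mathds E}_k(\xi)$ versus $\hat{\mathds E}_k(\xi-\eta)$, and a backward Gronwall---matches the paper's. Part~(b) is essentially right (the conditional Cauchy--Schwarz actually gives $\Delta_n$ rather than $q\Delta_n$, via $|\hat{\mathds E}_k(Y\Delta W)|=\sup_{|u|=1}\hat{\mathds E}_k(Y\,u\cdot\Delta W)$, but this only affects constants).

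There is however a genuine gap in part~(a): your treatment of the $\zeta$-term destroys the improved rate. After Young the $\zeta$-contribution to $\|\tilde Y_k-\hat Y_k\|_2^2$ is $\sim\Delta_n[f]_{\rm Lip}^2\|\tilde\zeta_k-\hat\zeta_k\|_2^2$, and you propose to control it via part~(b). But part~(b) reads $\Delta_n\|\tilde\zeta_k-\hat\zeta_k\|_2^2\le C\|\bar X_k-\hat X_k\|_2^2+\|\tilde Y_{k+1}-\hat Y_{k+1}\|_2^2$, so after injection you acquire an \emph{extra} term $[f]_{\rm Lip}^2\,\|\tilde Y_{k+1}-\hat Y_{k+1}\|_2^2$ with coefficient of order~$1$, not~$\Delta_n$. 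The recursion then becomes
\[
\|\tilde Y_k-\hat Y_k\|_2^2\le\bigl(1+[f]_{\rm Lip}^2+O(\Delta_n)\bigr)\|\tilde Y_{k+1}-\hat Y_{k+1}\|_2^2+\cdots,
\]
and unfolding gives a factor $(1+[f]_{\rm Lip}^2)^{n-k}$, which blows up exponentially in~$n$ and cannot produce $e^{(1+[f]_{\rm Lip})(t_i-t_k)}$.

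The paper avoids this by \emph{never} isolating $\|\tilde\zeta_k-\hat\zeta_k\|$ in part~(a). Instead it applies Pythagoras to $\tilde Y_k-\hat Y_k$ itself, writes the $f$-difference via \emph{divided-difference} coefficients $\hat B_k,\hat C_k$ (both $\sigma(\hat X_k)$-measurable with $|\hat B_k|,|\hat C_k|\le[f]_{\rm Lip}$), uses $\hat{\mathds E}_k(\tilde\zeta_k)-\hat\zeta_k=\tfrac{1}{\Delta_n}\hat{\mathds E}_k\bigl((\tilde Y_{k+1}-\hat Y_{k+1})\Delta W_{k+1}\bigr)$, and then \emph{recombines} everything as
\[
\hat{\mathds E}_k(\tilde Y_k-\hat Y_k)=\hat{\mathds E}_k\bigl[(\tilde Y_{k+1}-\hat Y_{k+1})\bigl(1+\Delta_n\hat B_k+\hat C_k\Delta W_{k+1}\bigr)\bigr]+\Delta_n\cdot(\text{terms in }\bar X_k-\hat X_k).
\]
Since $\hat{\mathds E}_k(\Delta W_{k+1})=0$, conditional Schwarz gives $\hat{\mathds E}_k\bigl[(1+\Delta_n\hat B_k+\hat C_k\Delta W_{k+1})^2\bigr]=(1+\Delta_n\hat B_k)^2+\hat C_k^2\Delta_n\le e^{2\Delta_n[f]_{\rm Lip}}$, so the per-step inflation on $\|\tilde Y_{k+1}-\hat Y_{k+1}\|_2^2$ is $e^{\Delta_n(1+[f]_{\rm Lip})}$, i.e.\ genuinely $1+O(\Delta_n)$. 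This orthogonality of the $\Delta W$-factor---not a bound on $\|\tilde\zeta_k-\hat\zeta_k\|$---is the mechanism that keeps the $Z$-dependence in the driver from spoiling the rate, and it is missing from your sketch.
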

 
The proof 
is divided in two main steps: in the first one we establish the propagation of the Lipschitz property through the functions $\tilde y_k$ and $\tilde z_k$ involved  in the   Markov representation~\eqref{EqDiscBSDE1}-\eqref{EqDiscBSDE2} of $\tilde Y_k$ and $\tilde \zeta_k$, namely $\tilde Y_k= \tilde y_k(\bar X_k)$ and  $\tilde \zeta_k = \tilde z_k(\bar X_k)$, and  to control precisely the propagation of their  Lipschitz coefficients (an alternative to this phase can be  to consider the Lipschitz properties of the flow of the SDE like in~\cite{IllThese}). As a second step, we introduce the quantization based scheme which is the counterpart of~\eqref{EqDiscBSDE1} and~\eqref{EqDiscBSDE2} for which we  establish  a backward recursive inequality satisfied by $\| \tilde Y_k-\hat Y_k\|_2^2$.
%

\begin{rem} ({\em About the relationship between the temporal and the spatial partitions}) Owing to the non-asymptotic bound for the quantization (see Theorem \ref{ThemZadPierce} further), we deduce from  the upper bound of  Equation~\eqref{Eq1TheoremPrincBsde} that there exists some constants $c_i$, $i=1, \ldots,n$ (only depending on the coefficients $b$ and $\sigma$ of the diffusion $X$) such that for every $k =1, \ldots,n$,
\begin{equation}  \label{EqOptiDispatching}
 \big \Vert    \tilde Y_k - \hat Y_k \big \Vert _2^2     \leq     \sum_{i=k}^{n}       c_i  N_i^{-2/d}.
 \end{equation}
 So, a natural question   is to  determine  how to  dispatch optimally the sizes $N_1, \cdots,N_n$ (for a fixed mesh of  length  $n$, given that $X_0$ is deterministic and, as such, perfectly quantized with   $N_0=1$)  of the quantization grids  under the  total ``budget'' constraint $N_1+ \cdots + N_n\le N$  of elementary quantizers (with $N\ge n$ and $N_k \geq 1$, for every $k=1, \ldots,n$). This    amounts (at least at time $k=0$) to solving the constrained minimization problem 
 \[
 \min_{N_1+\cdots+N_n \le  N}    \sum_{i=1}^{n}  c_{i}  N_{i} ^{-2/d},
 \]
 whose solution reads  $\displaystyle N_{i} = \left\lfloor \frac{c_{i}^{ \frac{d}{d+2}}}{\sum_{k=1}^n c_{k}^{ \frac{d}{d+2}} } N \right \rfloor\vee 1$,    $i =1, \ldots, n$. Coming back to \eqref{EqOptiDispatching},  and using  the H\"{o}lder inequality (to get  the second inequality below) yields
\begin{equation}  \label{Upper1}
 \big \Vert    \tilde Y_0 - \hat Y_0 \big \Vert _2      \leq   N^{-1/d} \Big( \sum_{i=1}^n c_{i}^{ \frac{d}{d+2}}  \Big)^{1/2+1/d} \le  \Big(\frac{n}{N} \Big)^{1/d}\Big(\sum_{i=1}^n c_{i} \Big)^{\frac 12}\le\Big[\max_{i=1,\ldots,n} \!c_i^{\frac 12}\Big]\frac{n^{1/2+1/d} }{N^{\frac 1d}} .
\end{equation}
Notice that for the standard (``non-improved'') error bounds (see the introduction), the same optimal allocation procedure would yield (starting from  $ \big \Vert    \tilde Y_0 - \hat Y_0\big \Vert _2     \leq     \sum_{i=0}^{n}       c'_i  N_i^{-1/d}$), 
\[
\big \Vert    \tilde Y_0 - \hat Y_0 \big \Vert _2       \le  \Big(\frac{n}{N} \Big)^{1/d}\, \sum_{i=1}^n c'_{i}\le\Big[\max_{i=1,\ldots,n} \!c'_i\Big]\frac{n^{1+1/d} }{N^{\frac 1d}} 
\]
which emphasizes the improvement of the error bound as concerns the dependence in the time mesh size $n$.
\end{rem}

  \subsubsection{First step toward the proof of Theorem~\ref{TheoremPrincBsde}: Lipschitz operators}
  
  As a first step we  introduce  several operators which appear naturally when representing $Y_k$. We will show that these operators propagate Lipschitz continuity. It is a classical step when establishing {\em a priori error bounds} going back to~\cite{BalPag, BalPag1}, see also more recently~\cite{GobLemWar} (Proposition~3.4). However we do not skip it since it emphasizes the  technical specificities induced by our choice of an inner explicit scheme.
  
  To be more precise, we set for every $k\!\in \{0,\ldots,n-1\}$ and every Borel function  $g:\mathbb R^d \to \mathbb R$ with polynomial   growth
 \begin{eqnarray}
 {\cal E}_k(x,u) &= &x+\Delta_n b(t_k,x)+\sqrt{\Delta_n} \sigma(t_k,x) u,\; x\!\in \mathbb R^d, \; u\!\in \mathbb R^q\\
 P_{k+1}g(x) &=& \mathds E \,g\big(  {\cal E}_k(x,\varepsilon) \big) \quad \mbox{ where }\;\varepsilon \sim{\cal N}(0;I_q)\\
 Q_{k+1}g(x)&=&   \frac{1}{\sqrt{\Delta_n}} \mathds E \Big(g\big(  {\cal E}_k(x,\varepsilon) \big)\varepsilon\Big).
 \end{eqnarray}

 One immediately checks that for every $k \in \{0, \ldots,n-1  \}$,
 \[
 \mathds E_kg(\bar X_{k+1})= P_{k+1}g(\bar X_k)\quad \mbox{ and }\quad \mathds E_k\big(g(\bar X_{k+1})(W_{t^n_{k+1}}-W_{t^n_k})\big)=  \Delta_n Q_{k+1}g(\bar X_k).
 \]
 
 Note that the process  $(\bar X_k)_{0\le k\le n}$ is an $({\cal F}_k)_{0\le k\le n}$-Markov chain with transitions $P_{k}(x,dy)= \mathbb P(\bar X_k\in dy\,|\, \bar X_{k-1}=x)$, $k=1,\ldots,n$.   Moreover,  it  shares the property to propagate the Lipschitz property as established in the Lemma below.
 
 \begin{lem}  \label{LemProofProTheo} For every $k=0, \ldots,n-1$, the transition operator $P_{k+1}$ is  Lipschitz in the sense that its  Lipschitz coefficient defined by 
 $
 [P_{k+1}]_{\rm Lip}:=\displaystyle \sup_{f,\, [f]_{\rm Lip}\le 1}[P_{k+1}f]_{\rm Lip}
 $  
 is finite.  More precisely, it satisfies: 
\begin{equation}\label{eq:PLip}
 [P_{k+1}]_{\rm Lip}  \leq e^{\Delta_n C_{b,\sigma,T}}
 \end{equation}
where $C_{b,\sigma,T}$ is given by~\eqref{eq:CbsigmaT} (see also the comment that follows).
 \end{lem}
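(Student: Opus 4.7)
The plan is to bound $|P_{k+1}g(x) - P_{k+1}g(y)|$ directly from the definition $P_{k+1}g(x)=\mathbb E\, g(\mathcal E_k(x,\varepsilon))$, using the Lipschitz property of $g$ together with an explicit $L^2$-estimate on the difference of the Euler-like maps $\mathcal E_k(x,\varepsilon)-\mathcal E_k(y,\varepsilon)$, thought of as a function of the initial condition only.

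First I would assume, by homogeneity, that $[g]_{\rm Lip}\le 1$. Then
\[
|P_{k+1}g(x)-P_{k+1}g(y)| \le \mathbb E\,|\mathcal E_k(x,\varepsilon)-\mathcal E_k(y,\varepsilon)| \le \big(\mathbb E\,|\mathcal E_k(x,\varepsilon)-\mathcal E_k(y,\varepsilon)|^2\big)^{1/2}
\]
by Jensen. Since $\varepsilon\sim\mathcal N(0;I_q)$ is centered with $\mathbb E[\varepsilon\varepsilon^*]=I_q$, the cross terms containing $\varepsilon$ vanish in expectation and I obtain
\[
\mathbb E\,|\mathcal E_k(x,\varepsilon)-\mathcal E_k(y,\varepsilon)|^2 = \big|(x-y)+\Delta_n(b(t_k,x)-b(t_k,y))\big|^2+\Delta_n\,\|\sigma(t_k,x)-\sigma(t_k,y)\|^2,
\]
where $\|\cdot\|$ denotes the Frobenius norm (recall the notation introduced at the end of the introduction).

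Next, I would expand the squared Euclidean norm, use Cauchy--Schwarz on the cross term together with the Lipschitz property of $b$, and invoke the Lipschitz property of $\sigma$:
\[
\mathbb E\,|\mathcal E_k(x,\varepsilon)-\mathcal E_k(y,\varepsilon)|^2 \le |x-y|^2\Big(1+2\Delta_n[b]_{\rm Lip}+\Delta_n^2[b]_{\rm Lip}^2+\Delta_n[\sigma]_{\rm Lip}^2\Big).
\]
Since $n\ge n_0$ gives $\Delta_n\le \Delta_{n_0}=T/n_0$, I can absorb the quadratic-in-$\Delta_n$ term via $\Delta_n^2[b]_{\rm Lip}^2\le \Delta_n\frac{T}{n_0}[b]_{\rm Lip}^2$, obtaining
\[
\mathbb E\,|\mathcal E_k(x,\varepsilon)-\mathcal E_k(y,\varepsilon)|^2 \le |x-y|^2\Big(1+2\Delta_n C_{b,\sigma,T}\Big)
\]
with $C_{b,\sigma,T}$ exactly as defined in~\eqref{eq:CbsigmaT}. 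Finally, using $1+u\le e^u$ and taking square roots yields $|P_{k+1}g(x)-P_{k+1}g(y)|\le e^{\Delta_n C_{b,\sigma,T}}|x-y|$, hence the announced bound on $[P_{k+1}]_{\rm Lip}$ after taking the supremum over $g$ with $[g]_{\rm Lip}\le 1$.

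The computation is elementary; the only subtle point is the handling of the $\Delta_n^2[b]_{\rm Lip}^2$ term, which is the reason for the $\frac{T}{n_0}[b]_{\rm Lip}^2$ contribution in $C_{b,\sigma,T}$ and forces the assumption $n\ge n_0$. Everything else is Jensen, Cauchy--Schwarz, and the elementary inequality $1+u\le e^u$.
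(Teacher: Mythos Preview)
Your proof is correct and follows essentially the same approach as the paper: bound $|P_{k+1}g(x)-P_{k+1}g(y)|$ via Jensen and the Lipschitz property of $g$, compute $\mathbb E|\mathcal E_k(x,\varepsilon)-\mathcal E_k(y,\varepsilon)|^2$ explicitly using $\mathbb E\,\varepsilon=0$ and $\mathbb E[\varepsilon\varepsilon^*]=I_q$, then absorb the $\Delta_n^2[b]_{\rm Lip}^2$ term via $\Delta_n\le T/n_0$ and conclude with $1+u\le e^u$. The only cosmetic difference is that the paper passes through the intermediate bound $(1+\Delta_n C_{b,\sigma,T})^2$ before applying the exponential inequality, whereas you go directly from $1+2\Delta_n C_{b,\sigma,T}$ to $e^{2\Delta_n C_{b,\sigma,T}}$; both routes are equivalent.
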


 \begin{proof}[{\bf Proof.}]  We have for every $x,x' \in \mathds R^d$, and for every Lipschitz continuous function  $g$
  \begin{eqnarray*}
 \vert P_{k+1} g(x)  - P_{k+1}g(x') \vert^2 & \leq &   \mathds E \,\vert g\big(  {\cal E}_k(x,\varepsilon) \big)  - \mathds E \,g\big(  {\cal E}_k(x',\varepsilon) \big)  \vert ^2  \\ 
 &  \leq & [g]_{\rm Lip}^2  \,   \mathds E \vert {\cal E}_k(x,\varepsilon)  -  {\cal E}_k(x',\varepsilon)  \vert ^2
 \end{eqnarray*}
and elementary  computations, already carried out  in~\cite{BalPag}, show that 
 \begin{eqnarray*}   
 \mathds E \vert {\cal E}_k(x,\varepsilon)  -  {\cal E}_k(x',\varepsilon)  \vert ^2 &\le&  \big(1+ \Delta_n(2[b(t^n_k,.)]_{\rm Lip} +[\sigma(t^n_k,.)]^2_{\rm Lip})+ \Delta^2_n [b(t^n_k,.)]_{\rm Lip}^2  \big) \vert x-x' \vert^2 \\
 &\le& \big(1+ \Delta_n(2[b]_{\rm Lip} +[\sigma]^2_{\rm Lip})+ \Delta^2_n [b]_{\rm Lip}^2  \big) \vert x-x' \vert ^2\\
 &\le & (1+\Delta_n C_{b,\sigma,T})^2 \vert x-x' \vert^2\\
 &\le& e^{ 2 \Delta_n C_{b,\sigma,T}} \vert x-x' \vert^2
  \end{eqnarray*} 
  where $C_{b,\sigma,T}$ can be $e.g.$ taken equal to $[b]_{\rm Lip} +\frac 12([\sigma]^2_{\rm Lip}+ \frac{T}{n_0} [b]_{\rm Lip}^2)$ provided $n \ge n_0$. It follows that $P_{k+1}$ is Lipschitz with Lipschitz constant  $ [P_{k+1}]_{\rm Lip}  \leq e^{\Delta_n C_{b,\sigma,T}}.$  
%
 \end{proof}

 
  \begin{prop} \label{PropyKP1Lip} (see~\cite{BalPag})     $(a)$ The  functions $y_k$, $k=0,\ldots,n$, defined by the backward induction
 \[
  y_n =  h,\quad y_k =  P_{k+1}y_{k+1}+ \Delta_n f_k\big(\,.\,,P_{k+1}y_{k+1},Q_{k+1}y_{k+1}\big), \; k=0,\ldots,n-1,
\]
  satisfies $\tilde Y_k=y_k(\bar X_k)$ for every $k\!\in \{0,\ldots,n\}$. Moreover, $\tilde \zeta_k= \frac{z_k (\bar X_k) }{ \sqrt{\Delta_n}}$  where,  for every $k\!\in \{0,\ldots,n-1\}$,  
   \[
z_k(x)  = \mathds E\big(y_{k+1}\big (\mathcal{ E}_k (x, \varepsilon) \big)\varepsilon \big),\; k=0,\ldots,n-1.
  \]
  
  \noindent $(b)$ Furthermore, assume that  the function $h$ is $[h]_{\rm Lip}$-Lipschitz continuous   and that the function $f(t, x,y,z)$ is  $[f]_{\rm Lip}$-Lipschitz continuous in $(x,y,z)$, uniformly in $t\!\in [0,T]$.   Then,  for every $k \!\in \{0, \ldots,n\}$,   the function $y_k$ is $[y_k]_{\rm Lip}$-Lipschitz continuous and there exists real constants $\kappa_0=C_{b,\sigma,T}+ [f]_{\rm Lip}(1+\frac 12[f]_{\rm Lip})$, and $\kappa_1= \displaystyle \frac{[f]_{\rm Lip}}{\kappa_0}+ [h]_{\rm Lip}$ (where $C_{b,\sigma,T}$ is given by~\eqref{eq:CbsigmaT}),
  such that $ [y_k]_{\rm Lip}  = [h]_{\rm Lip}$  and
   \begin{equation}     [y_k]_{\rm Lip}   \le \frac{\Delta_n}{e^{\kappa_0\Delta_n}-1}(e^{\kappa_0(T-t^n_k)}-1)[f]_{\rm Lip} + e^{\kappa_0(T-t^n_k)}[h]_{\rm Lip}e^{\kappa_0(T-t^n_k)}\kappa_1,\; k=0,\ldots,n-1.
  \label{eq:1PropyKP1Lip}
      \end{equation}
  In particular, $\displaystyle  \sup_{n\ge 1}  \max_{k=0,\ldots,n} [y_k]_{\rm Lip}   \le e^{\kappa_0 T }\kappa_1<+\infty$.   Moreover the functions $z_k$ are Lipschitz too and
   \begin{equation} \label{eq:Lipzk}  
  [z_k]_{\rm Lip} \le  \sqrt{q}\, e^{\Delta_nC_{b,\sigma,T}}\kappa_1e^{\kappa_0(T-t^n_{k+1})}, \; k=0,\ldots,n-1.
      \end{equation}
\end{prop}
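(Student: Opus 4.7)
The plan is to proceed by backward induction on $k$. The base case $k=n$ is just the definition $\tilde Y_n=h(\bar X_n)=y_n(\bar X_n)$. For the inductive step I would write $\bar X_{k+1}={\cal E}_k(\bar X_k,\eta_{k+1})$ with $\eta_{k+1}=(W_{t_{k+1}}-W_{t_k})/\sqrt{\Delta_n}\sim {\cal N}(0;I_q)$ independent of ${\cal F}_{t_k}$, then apply the standard freezing lemma for conditional expectations to the identity $\tilde Y_{k+1}=y_{k+1}(\bar X_{k+1})$. This yields $\mathds E_k(\tilde Y_{k+1})=P_{k+1}y_{k+1}(\bar X_k)$ and
\[
\tilde\zeta_k=\frac{1}{\sqrt{\Delta_n}}\mathds E_k\bigl(y_{k+1}({\cal E}_k(\bar X_k,\eta_{k+1}))\,\eta_{k+1}\bigr)=Q_{k+1}y_{k+1}(\bar X_k)=\frac{z_k(\bar X_k)}{\sqrt{\Delta_n}}.
\]
Plugging these two identities into the definition~\eqref{EqDiscBSDE2} of $\tilde Y_k$ gives $\tilde Y_k=y_k(\bar X_k)$ and closes the induction.

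\textbf{Part (b), Lipschitz bound on $y_k$.} Again by backward induction on $k$, starting from $[y_n]_{\rm Lip}=[h]_{\rm Lip}$. For the inductive step I would combine the triangle inequality with $({\rm Lip}_f)$ to get
\[
[y_k]_{\rm Lip}\le (1+\Delta_n[f]_{\rm Lip})[P_{k+1}y_{k+1}]_{\rm Lip}+\Delta_n[f]_{\rm Lip}+\Delta_n[f]_{\rm Lip}\,[Q_{k+1}y_{k+1}]_{\rm Lip},
\]
and invoke Lemma~\ref{LemProofProTheo} to bound $[P_{k+1}y_{k+1}]_{\rm Lip}\le e^{\Delta_n C_{b,\sigma,T}}[y_{k+1}]_{\rm Lip}$. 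For $Q_{k+1}y_{k+1}$ I would apply Cauchy--Schwarz to
\[
Q_{k+1}y_{k+1}(x)-Q_{k+1}y_{k+1}(x')=\frac{1}{\sqrt{\Delta_n}}\,\mathds E\bigl[(y_{k+1}({\cal E}_k(x,\varepsilon))-y_{k+1}({\cal E}_k(x',\varepsilon)))\,\varepsilon\bigr]
\]
and use $\mathds E|\varepsilon|^2=q$ together with the same Euler-scheme estimate of $\mathds E|{\cal E}_k(x,\varepsilon)-{\cal E}_k(x',\varepsilon)|^2$ that underlies Lemma~\ref{LemProofProTheo}, yielding $[Q_{k+1}y_{k+1}]_{\rm Lip}\le \sqrt{q/\Delta_n}\,e^{\Delta_n C_{b,\sigma,T}}[y_{k+1}]_{\rm Lip}$. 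Substituting back leaves a one-step recursion $[y_k]_{\rm Lip}\le\alpha\,[y_{k+1}]_{\rm Lip}+\Delta_n[f]_{\rm Lip}$ whose growth factor $\alpha$ mixes a benign $\Delta_n$ piece with a $\sqrt{\Delta_n}$ cross-term proportional to $\sqrt{q\Delta_n}\,[f]_{\rm Lip}$. A Young-type inequality applied to this cross-term is used to fold it into the exponent, producing exactly the quadratic correction $[f]_{\rm Lip}^2/2$ inside $\kappa_0$ and delivering $\alpha\le e^{\kappa_0\Delta_n}$. Iterating this exponential one-step recursion backward from $n$ to $k$ and summing the resulting geometric series in $e^{\kappa_0\Delta_n}$ produces~\eqref{eq:1PropyKP1Lip}; the uniform bound $e^{\kappa_0T}\kappa_1$ is then immediate.

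\textbf{Part (b), Lipschitz bound on $z_k$.} This drops out as a by-product: since $z_k(x)=\sqrt{\Delta_n}\,Q_{k+1}y_{k+1}(x)$, multiplying the Cauchy--Schwarz estimate of $[Q_{k+1}y_{k+1}]_{\rm Lip}$ by $\sqrt{\Delta_n}$ removes the singular factor and gives $[z_k]_{\rm Lip}\le\sqrt{q}\,e^{\Delta_n C_{b,\sigma,T}}[y_{k+1}]_{\rm Lip}$. Injecting the already-proved bound $[y_{k+1}]_{\rm Lip}\le\kappa_1 e^{\kappa_0(T-t^n_{k+1})}$ then yields~\eqref{eq:Lipzk}.

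\textbf{Expected difficulty.} Part (a) is routine. The substantive point of (b) is the conversion of the naive one-step inequality, which inherently carries the $1/\sqrt{\Delta_n}$ singular Lipschitz modulus of $Q_{k+1}$, into a mesh-uniform exponential recursion. The bookkeeping has to be done so that after $n$ iterations the bound remains bounded as $\Delta_n\to 0$; this is exactly what the quadratic correction $[f]_{\rm Lip}^2/2$ in $\kappa_0$ absorbs, and getting this constant explicit is the only delicate part of the proof.
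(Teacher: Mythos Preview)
Your Part~(a) and your argument for $[z_k]_{\rm Lip}$ are correct and match the paper. The gap is in your one-step recursion for $[y_k]_{\rm Lip}$.

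Applying the triangle inequality to split the $P_{k+1}$ and $Q_{k+1}$ contributions before bounding them separately leaves you with a growth factor
\[
\alpha \;=\; e^{\Delta_n C_{b,\sigma,T}}\Big(1+\Delta_n[f]_{\rm Lip}+\sqrt{q\Delta_n}\,[f]_{\rm Lip}\Big),
\]
and the $\sqrt{\Delta_n}$ piece cannot be ``folded into the exponent'' by any Young inequality: whatever splitting you choose for $\sqrt{q\Delta_n}\,[f]_{\rm Lip}$ produces a constant (mesh-independent) additive term, not an $O(\Delta_n)$ one. With $\alpha=1+c\sqrt{\Delta_n}+O(\Delta_n)$ you get $\alpha^{\,n}\approx e^{c\sqrt{nT}}\to\infty$, so the iterated bound blows up and you never obtain the uniform estimate $\sup_{n}\max_k[y_k]_{\rm Lip}<\infty$.

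The paper avoids this by \emph{not} separating the $P$ and $Q$ parts. It linearises $f_k$ via difference quotients $A_{x,x'},B_{x,x'},C_{x,x'}$ (all bounded by $[f]_{\rm Lip}$) and rewrites
\[
y_k(x)-y_k(x')=\mathds E\!\left[\big(y_{k+1}({\cal E}_k(x,\varepsilon))-y_{k+1}({\cal E}_k(x',\varepsilon))\big)\Big(1+\Delta_n B_{x,x'}+\sqrt{\Delta_n}\,C_{x,x'}\varepsilon\Big)\right]+\Delta_n A_{x,x'}(x-x').
\]
Cauchy--Schwarz is applied to this \emph{single} expectation. The point is that when computing $\big\|1+\Delta_n B+\sqrt{\Delta_n}\,C\varepsilon\big\|_2^2$, the dangerous cross-term $2(1+\Delta_n B)\sqrt{\Delta_n}\,C\,\mathds E[\varepsilon]$ vanishes because $\mathds E[\varepsilon]=0$, leaving $(1+\Delta_n B)^2+\Delta_n C^2\le 1+\Delta_n(2[f]_{\rm Lip}+[f]_{\rm Lip}^2)+\Delta_n^2[f]_{\rm Lip}^2$. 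This is where the correction $[f]_{\rm Lip}^2/2$ in $\kappa_0$ truly comes from: an orthogonality cancellation, not a Young inequality. Once you have $\alpha\le e^{\kappa_0\Delta_n}$ this way, your iteration and the geometric sum finish exactly as you described.
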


 \noindent{\em Proof}. $(a)$ We proceed by a    backward induction using~(\ref{EqDiscBSDE1}) and~(\ref{EqDiscBSDE2}), relying on  the fact that $(\bar X_k)_{k=0,\ldots,n}$ is a Markov chain which propagates Lipschitz continuity.  In fact, $\tilde Y_n = h(\bar X_n) := y_n(\bar X_n)$. Assuming that $\tilde Y_{k+1}  = y_{k+1}(\bar X_{k+1})$ and using  Equation ~(\ref{EqDiscBSDE2}) and the Markov property, we  get 
 \begin{eqnarray*}
  \tilde  Y_{k}  & = & \mathds E( y_{k+1}(\bar X_{k+1}) \vert  \bar X_k) +  \Delta_n f_k \big(\bar X_{k}, \mathds E(  y_{k+1}(\bar X_{k+1})  \vert  \bar X_k), \zeta_{t_k^n} \big)  \\
   & = & P_{k+1} y_{k+1} (\bar X_k) + \Delta_n f_k(\bar X_k, P_{k+1}y_{k+1} (\bar X_k), Q_{k+1}y_{k+1}(\bar X_k))=y_{k}(\bar X_k).
  \end{eqnarray*}
One shows likewise that $\zeta_k= Q_{k+1}(y_{k+1})(\bar X_k) = \frac{z_k(\bar X_k)}{\sqrt{\Delta_n}}$, $k=0,\ldots,n-1$.
 
\noindent $(b)$ We also show this claim by a backward induction. In fact, $\tilde Y_n = h( \bar  X_n):=y_n( \bar X_n)$ and $h$ is $[h]_{\rm Lip}$-Lipschitz.   Suppose that $y_{k+1}$ is $[y_{k+1}]_{\rm Lip}$-Lipschitz continuous. Then,  for every $x, x' \in \mathds R^d$, we can write 
  \begin{eqnarray*}
y_k(x) - y_k(x') &=&  \mathds E \big(     y_{k+1}\big({\cal E}_k(x,\varepsilon)\big)-y_{k+1}\big({\cal E}_k(x',\varepsilon)\big)\big)\\
 &&+\Delta_n \Big[A_{x,x'}(x-x')+B_{x,x'}\mathds E \big(     y_{k+1}\big({\cal E}_k(x,\varepsilon)\big)-y_{k+1}\big({\cal E}_k(x',\varepsilon)\big)\big)\\
 &&\left. \qquad \qquad + C_{x,x'}\mathds E \Big(\big(y_{k+1}\big({\cal E}_k(x,\varepsilon)\big)-y_{k+1}\big({\cal E}_k(x',\varepsilon)\big)\big)\frac{\varepsilon}{\sqrt{\Delta_n}}\Big)\right]
    \end{eqnarray*} 
    where   $\varepsilon  \sim \mathcal N(0,I_q)$ and 
   \begin{eqnarray*}   
   A_{x,x'}&=&     \frac{f_k\big(x, P_{k+1}y_{k+1}(x), Q_{k+1}y_{k+1}(x)\big)-f_k\big(x', P_{k+1}y_{k+1}(x), Q_{k+1}y_{k+1}(x)\big)}{x-x'} \mbox{\bf 1}_{\{x\neq x'\}},\\
        B_{x,x'}&=&   \frac{f_k\big(x', P_{k+1}y_{k+1}(x), Q_{k+1}y_{k+1}(x)\big)-f_k\big(x', P_{k+1}y_{k+1}(x'), Q_{k+1}y_{k+1}(x)\big)}{P_{k+1}y_{k+1}(x)- P_{k+1}y_{k+1}(x')} \mbox{\bf 1}_{P_{x,x'}},\\
             C_{x,x'}&=& \frac{f_k\big(x', P_{k+1}y_{k+1}(x'), Q_{k+1}y_{k+1}(x)\big)-f_k\big(x', P_{k+1}y_{k+1}(x'), Q_{k+1}y_{k+1}(x')\big)}{Q_{k+1}y_{k+1}(x)-Q_{k+1}y_{k+1}(x')} \mbox{\bf 1}_{Q_{x,x'}},
      \end{eqnarray*} 
with $P_{x,x'} = \{P_{k+1}y_{k+1}(x)\neq P_{k+1}y_{k+1}(x')\}$ and $Q_{x,x'} = \{Q_{k+1}y_{k+1}(x)\neq Q_{k+1}y_{k+1}(x')\}$.
      The function $f_k$ being Lipschitz continuous, one clearly has  $|A_{x,x'}|$, $|B_{x,x'}|$, $|C_{x,x'}|\le [f]_{\rm Lip}$. Now, taking advantage of the linearity of expectation, we get
\[
y_k(x) - y_k(x')    = \mathds E\left[\big(y_{k+1}\big({\cal E}(x,\varepsilon)\big)-y_{k+1}\big({\cal E}(x',\varepsilon)\big)\Big(1+\Delta_n \Big(B_{x,x'}+C_{x,x'}\frac{\varepsilon}{\sqrt{\Delta_n}} \Big)\Big)\right]   + A_{x,x'}(x-x').
\]
Then Schwarz's Inequality yields
 \[
 \vert y_k(x) - y_k(x') \vert   \le \big\|y_{k+1}\big({\cal E}(x,\varepsilon)\big)-y_{k+1}\big({\cal E}(x',\varepsilon)\big)\big\|_2 \Big\|1+\Delta_n \Big(B_{x,x'}+C_{x,x'}\frac{\varepsilon}{\sqrt{\Delta_n}}\Big)\Big\|_2+\Delta_n  [f]_{\rm Lip}\,|x-x'|.
 \]
 Now, 
 \begin{equation*}
 \big\| y_{k+1}\big({\cal E}_k(x,\varepsilon)\big)-y_{k+1}\big({\cal E}_k(x',\varepsilon)\big)\big\|_2 \le  [y_{k+1}]_{\rm Lip} \big\|{\cal E}_k(x,\varepsilon) -{\cal E}_k(x',\varepsilon)\big\|_2  \le [y_{k+1}]_{\rm Lip} e^{\Delta_n C_{b,\sigma,T}} \vert  x - x'\vert
 \end{equation*}
by Lemma~\ref{LemProofProTheo}. On the other hand, using that   $|B_{x,x'}|$, $|C_{x,x'}|\le [f]_{\rm Lip}$ and $\mathds E (\varepsilon)=0$, 
  \begin{eqnarray*}   
 \Big\|1+\Delta_n \Big(B_{x,x'}+C_{x,x'} \frac{\varepsilon}{\sqrt{\Delta_n}} \Big)\Big\|_2^2&= &(1+\Delta_nB_{x,x'})^2+\Delta_nC_{x,x'}^2\\
 &\le& 1+\Delta_n\big(2[f]_{\rm Lip}+[f]^2_{\rm Lip}\big)+\Delta_n^2[f]_{\rm Lip}^2\\
 &\le & e^{2\Delta_n [f]_{\rm Lip}(1+\frac 12[f]_{\rm Lip})} .
  \end{eqnarray*} 
 Finally, owing to the definition of $\kappa_0$, we get
 \[
 \big|y_k(x) - y_k(x')  \big| \le \big( e^{\Delta_n \kappa_{0}}[y_{k+1}]_{\rm Lip}+\Delta_n[f]_{\rm Lip} \big)|x-x'|
 \]
 $i.e.$ $y_{k}$ is Lipschitz continuous with Lipschitz coefficient $[y_k]_{\rm Lip}$ satisfying
 \[
 [y_{k}]_{\rm Lip}\le e^{ \kappa_{0}\Delta_n}[y_{k+1}]_{\rm Lip}+\Delta_n[f]_{\rm Lip}.
 \]
The conclusion follows by induction. As for the functions $z_k$, we get
for every $k=0,\ldots,n-1$,
\[   
z_k(x) - z_k(x')   = \mathds E \Big( \Big(y_{k+1}\big(\mathcal E_k(x,\varepsilon) \big)- y_{k+1}\big(\mathcal E_k(x',\varepsilon) \big)\Big) \varepsilon  \Big).
\]
Hence, using  that $\varepsilon  \sim \mathcal {N}(0;I_q)$ combined with  Schwartz's Inequality, we get  
 \begin{eqnarray*}  \big| z_k(x) - z_k(x') \big|& \le & [y_{k+1}]_{\rm Lip} \mathds E \Big|\big( (x-x') + \Delta_n (b(x) - b(x')) + \sqrt{\Delta_n}   (\sigma(x)  - \sigma(x')) \varepsilon  \big) \varepsilon    \Big|\\
 &\le &  [y_{k+1}]_{\rm Lip}\big\|  (x-x') + \Delta_n (b(x) - b(x')) + \sqrt{\Delta_n}   (\sigma(x)  - \sigma(x')) \varepsilon  \big\|_2 \big\|  \varepsilon \big\|_2\\
 &\le&[y_{k+1}]_{\rm Lip}e^{\Delta_nC_{b,\sigma,T}} \sqrt{q}  |x-x'|\\
 &\le& \sqrt{q}\, e^{\Delta_nC_{b,\sigma,T}}\kappa_1e^{\kappa_0(T-t^n_{k+1})}  |x-x'|. \hskip 6cm \Box
 \end{eqnarray*}
 

 \subsubsection{Second step of the proof of Theorem~\ref{TheoremPrincBsde}}
 Let $(\hat X_k)_{k=0, \ldots,n}$ be the quantization of the Markov chain $\bar X$, where every quantizer $\hat X_k$ is of size $N_k$, for every $k \in \{0, \ldots, n\}$.  Recall that  the discrete time quantized BSDE process $(\hat Y_k)_{k =0, \ldots,n}$ is  defined by the following recursive algorithm:
  \begin{eqnarray*}
  \hat Y_{n} &  = & h(\hat X_{n})\\
  \hat Y_{k}  & = & \hat {\mathds E}_k ( \hat Y_{k+1})  +  \Delta_n   f_k \big(\hat X_{k}, \hat{\mathds E}_k(\hat Y_{k+1}), \hat {\zeta}_{k} \big)   \\
\mbox{with }\hskip 3,0 cm       \hat {\zeta}_{k} & = & \frac{1}{\Delta_n} \hat{\mathds E}_k(\hat Y_{k+1}   \Delta W_{t_{k+1}}), k=0, \ldots,n-1,
  \end{eqnarray*}
where 
$\hat{\mathds E}_k = \mathds E (\cdot \,\vert\, \hat X_k)$. Owing to the previous section, we are now in position to prove Theorem~\ref{TheoremPrincBsde}. 
   
  
\bigskip
\noindent {\bf Proof of Theorem~\ref{TheoremPrincBsde}.}  $(a)$ Using the fact that, for every $k \in \{0, \ldots,n  \}$,  $\sigma(\hat X_k )  \subset \sigma(\bar X_k)$, we have
\begin{equation}  \label{EqFirstDecomp}
\tilde Y_k - \hat Y_k   = \tilde Y_k -  \mathds{\hat E}_k (\tilde Y_k)  + \mathds{\hat E}_k(\tilde Y_k -  \hat Y_k)
\end{equation}
where $\tilde Y_k -  \mathds{\hat E}_k (\tilde Y_k)$ and $\mathds{\hat E}_k(\tilde Y_k -  \hat Y_k)$ are square integrable and orthogonal in $L^2(\sigma(\bar X_k))$.  As a consequence,  using the Pythagoras theorem for  conditional expectation yields
\[
\Vert \tilde Y_k - \hat Y_k  \Vert_2^2 = \Vert \tilde Y_k -  \mathds{\hat E}_k (\tilde Y_k) \Vert_2^2  + \Vert \mathds{\hat E}_k(\tilde Y_k -  \hat Y_k) \Vert_2^2.
\]

On the other hand,   it follows from the definition of the conditional expectation $\hat{\mathds E}_k(\cdot)$ as the best approximation  in $L^2$ among square integrable  $\sigma(\hat X_k)$-measurable random vectors    that 
$$  
\Vert   \tilde Y_k -  \mathds{\hat E}_k (\tilde Y_k)  \Vert_2^2  = \Vert   y_k(\bar X_k) -  \mathds{\hat E}_k (y_k(\bar X_k))  \Vert_2^2  \leq  \Vert   y_k(\bar X_k) -  y_k(\hat X_k)  \Vert_2^2 \leq [y_k]_{\rm Lip}^2 \Vert  \bar X_k - \hat X_k \Vert_2^2. 
$$
 Let us consider now the last term of  the equality~(\ref{EqFirstDecomp}).   We have,
   \begin{eqnarray*}
 \mathds{\hat E}_k(\tilde Y_k -  \hat Y_k)  & = & \mathds{\hat E}_k \big[ \tilde Y_{k+1} -\hat Y_{k+1} + \Delta_n \big( f_k(\bar X_k, \mathds E_k(\tilde Y_{k+1}), \tilde \zeta_k)  - f_k(\hat X_k, \hat{\mathds E}_k(\hat Y_{k+1}),\hat \zeta_k)   \big) \big] \\
 &  =  &  \mathds{\hat E}_k \big[ \tilde Y_{k+1} -\hat Y_{k+1}  + \Delta_n \big( f_k(\bar X_k, \mathds E_k( \tilde Y_{k+1}), \tilde \zeta_k)  - f_k(\hat X_k, \hat{\mathds E}_k(\tilde Y_{k+1}), \hat{\mathds E}_k( \tilde \zeta_k))   \big) \\
 &  & + \,   \Delta_n \big( f_k(\hat X_k, \hat{\mathds E}_k(\tilde Y_{k+1}), \hat{\mathds E}_k(  \tilde \zeta_k))  - f_k(\hat X_k, \hat{\mathds E}_k(\hat Y_{k+1}),\hat \zeta_k)   \big)   \big]  \\
 & = &  \mathds{\hat E}_k \big[ \tilde Y_{k+1} -\hat Y_{k+1} + \Delta_n \hat B_k  \hat{\mathds E}_k(\tilde Y_{k+1} -\hat Y_{k+1}) + \Delta_n \hat C_k \hat{\mathds E}_k( \tilde \zeta_k  -\hat \zeta_k) \big]  \\
 & & + \,  \Delta_n\hat{\mathds E}_k \big( f_k(\bar X_k, \mathds E_k(\tilde Y_{k+1}),  \tilde \zeta_k)  - f_k(\hat X_k, \hat{\mathds E}_k(\tilde Y_{k+1}), \hat{\mathds E}_k( \tilde \zeta_k))   \big)
 \end{eqnarray*}
\begin{eqnarray*}
   &\hskip -0.5cm  \textrm{where } \quad& \hat B_k   :=  \frac{f_k(\hat X_k, \hat{\mathds E}_k(\tilde Y_{k+1}),\hat{\mathds E}_k( \tilde \zeta_k))  - f_k(\hat X_k, \hat{\mathds E}_k(\hat Y_{k+1}), \hat{\mathds E}_k( 
\tilde \zeta_k))  }{\hat{\mathds E}_k(\tilde Y_{k+1} )-  \hat{\mathds E}_k( \hat Y_{k+1})} \mbox{\bf 1}_{ \{\hat{\mathds E}_k(\tilde Y_{k+1} ) \not=  \hat{\mathds E}_k(\hat Y_{k+1} ) \}} \\
    &\hskip -0.5cm  \textrm{and } \qquad  &  \hat{C}_k :=  \frac{f_k(\hat X_k, \hat{\mathds E}_k(\hat Y_{k+1}), \hat{\mathds E}_k( \tilde \zeta_k))  - f_k(\hat X_k, \hat{\mathds E}_k(\hat Y_{k+1}), \hat {\zeta}_k))}{\hat{\mathds E}_k( \tilde \zeta_{k} )-  \hat{\mathds E}_k(\hat{\zeta}_{k} )} \mbox{\bf 1}_{ \{\hat{\mathds E}_k( \tilde \zeta_{k} ) \not=  \hat{\mathds E}_k(\hat{\zeta}_{k} ) \}}.
  \end{eqnarray*}
  As 
  $$  \hat{\mathds E}_k( \tilde \zeta_{k} )-  \hat{\mathds E}_k(\hat{\zeta}_{k} )  = \frac{1}{ \Delta_n } \hat{\mathds E}_k ((\tilde Y_{k+1} -\hat Y_{k+1}) \Delta W_{t_{k+1}}),
  $$
we deduce that 
 \begin{eqnarray}  \label{EqTermsToControl}
 \mathds{\hat E}_k(\tilde Y_k -  \hat Y_k)  & = &  \mathds{\hat E}_k \big[ \big(\tilde Y_{k+1} -\hat Y_{k+1} \big) \big(1 +\Delta_n \hat B_k + \hat C_k  \Delta W_{t_{k+1}} \big) \big]  \nonumber \\
 & &+ \Delta_n \big( f_k(\bar X_k, \mathds E_k(\tilde Y_{k+1}),  \tilde \zeta_k)  - f_k(\hat X_k, \hat{\mathds E}_k(\tilde Y_{k+1}), \hat{\mathds E}_k( \tilde \zeta_k))   \big).
 \end{eqnarray}
So, it remains to control each term of the above equality. Considering its last term, it follows from the Lipschitz assumption on the driver  $f_k$   that 
 \begin{eqnarray*}
\Vert  f_k(\bar X_k, \mathds E_k(\tilde Y_{k+1}), \tilde \zeta_k)   -  f_k(\hat X_k, \hat{\mathds E}_k(\tilde Y_{k+1}), \hat{\mathds E}_k( \tilde \zeta_k)) \Vert_2^2  & \leq &   [f]_{\rm Lip}^2 \big( \Vert \bar X_k -\hat X_k \Vert_2^2 \\
&& +  \Vert  \mathds E_k(\tilde Y_{k+1})  - \hat{\mathds E}_k (\mathds E_k (\tilde Y_{k+1})) \Vert_2^2  \\
& & + \Vert  \tilde \zeta_k  - \hat{\mathds E}_k( \tilde  \zeta_k) \Vert_2^2\big).
  \end{eqnarray*}
First, from the very definition of conditional expectation operator $\hat{\mathds E}_k$ as the best  quadratic approximation by a Borel function of $\hat X_k$ (or, equivalently, the orthogonal projection on $L^2(\sigma(\hat X_k), \PP)$), we derive that 
    \begin{eqnarray*}
   \Vert  \mathds E_k(\tilde Y_{k+1})  - \hat{\mathds E}_k (\mathds E_k (\tilde Y_{k+1})) \Vert_2^2  & \leq & \Vert  P_{k+1} y_{k+1}(\bar X_k) - P_{k+1} y_{k+1}(\hat X_k)  \Vert_2^2  \\
   &  \leq & [P_{k+1}]_{\rm Lip}^2 [y_{k+1}]_{\rm Lip}^2 \Vert  \bar X_k -  \hat X_k \Vert_2^2.  
       \end{eqnarray*}
  On the other hand, starting from  $\tilde \zeta_k = \frac{1}{\Delta_n} \mathds E_k(\tilde Y_{k+1}  \Delta W_{t_{k+1}}) =  \frac{z_k(\bar X_k)}{\sqrt{\Delta_n}}$, $k=0,\ldots,n-1$ (see Proposition~\ref{PropyKP1Lip}$(a)$), we get, using again the above characterization of the  conditional expectation operator~$\hat{\mathds E}_k$, 
   \begin{eqnarray}
 \nonumber \Vert  \tilde  \zeta_k  -  \hat{ \mathds E}_k  \tilde \zeta_k \Vert_2^2  & =  &
 \frac{1}{\Delta_n} \Vert  z_k(\bar X_k) -  \hat{ \mathds E}_k ( z_k( \bar X_k)) \Vert_2^2 \\
    \label{eq:zeta}     &  \leq &  \frac{1}{\Delta_n} \Vert  z_k(\bar X_k) -  z_k(\hat X_k) \Vert_2^2 \leq \frac{1 }{ \Delta_n}  [z_k]_{\rm Lip}^2 \Vert  \bar X_k - \hat X_k  \Vert_2^2.  \label{eq:1PropyKP1LipZ}
      \end{eqnarray}
%
Finally, using the upper-bound for $[z_k]_{\rm Lip}$ established in  Proposition~\ref{PropyKP1Lip}$(b)$, we  deduce that
\setlength\arraycolsep{1pt}
 \begin{eqnarray} 
\nonumber \Vert  f_k(\bar X_k, \mathds E_k(\tilde Y_{k+1}),  \tilde \zeta_k) &   - & f_k(\hat X_k, \hat{\mathds E}_k(\tilde Y_{k+1}), \hat{\mathds E}_k( \tilde \zeta_k)) \Vert_2\\
  &   \leq  &  \Big(C_{1,k}(b,\sigma,T,f)   + \frac{C_{2,k}(b,\sigma,T,f)}{\Delta_n}\Big)^{\frac 12} \Vert  \bar X_k  - \hat X_k \Vert_2  \label{ControlFirstTerm}
  \end{eqnarray}
   \setlength\arraycolsep{3pt}
since, owing to~\eqref{eq:1PropyKP1Lip} and~\eqref{eq:Lipzk}, we have   
\begin{equation*} 
 [f]_{\rm Lip}^2 \big(1 + [P_{k+1}]_{\rm Lip}^2[y_{k+1}]_{\rm Lip}^2   \big)\le  C_{1,k}(b,\sigma,T,f)  \  \mbox{ and } \   [f]_{\rm Lip}^2 [z_k]^2_{\rm Lip}\le C_{2,k}(b,\sigma,T,f),
 \end{equation*}
$k=0,\ldots,n-1$, where
\begin{equation}\label{eq:C1C2}
 C_{2,k}(b,\sigma,T,f) = q\kappa_1^2  [f]_{\rm Lip}^2 e^{2 \Delta_n C_{b,\sigma,T} + 2 \kappa_0(T-t_{k+1})}    \  \mbox{ and } \    C_{1,k}(b,\sigma,T,f)=  [f]_{\rm Lip}^2+ \frac{C_{2,k}(b,\sigma,T,f)}{q} .
\end{equation}
To complete the proof, it  suffices to control the remaining  terms in  Equation~\eqref{EqTermsToControl}. Using the (conditional)  Schwarz  inequality  yields
    \begin{equation*}
\Big| \mathds{\hat E}_k \big[ \big(\tilde Y_{k+1} -\hat Y_{k+1} \big) \big(1 - \Delta_n \hat B_k -  \hat C_k  \Delta W_{t_{k+1}} \big) \big] \Big|   \leq    \big[\hat{\mathds E}_k(\tilde Y_{k+1} - \hat Y_{k+1})^2 \big]^{\frac 12}   \big[\hat{\mathds E}_k(1- \Delta_n \hat B_k -  \hat C_k  \Delta W_{t_{k+1}} )^2 \big]^{\frac 12} .
  \end{equation*}
Furthermore, using the fact that  $\hat{\mathds E}_k(\Delta W_{t_{k+1}}) = \hat{\mathds E}_k( \mathds E_k(\Delta W_{t_{k+1}})) = 0$  and owing to the measurability of $\hat B_k$ and $\hat C_k$ with  respect to $\sigma(\hat X_k)$, we get
\begin{eqnarray*}  
  \hat{\mathds E}_k \big[(1- \Delta_n \hat B_k - \hat C_k  \Delta W_{t_{k+1}} )^2 \big] & = &  (1-\Delta_n \hat B_k)^2 +  \hat C_k^2  \hat{\mathds E}_k ((\Delta W_{t_{k+1}})^2) \\
  & =  &  (1-\Delta_n \hat B_k)^2 + \hat C_k^2 \Delta_n \\
  & \leq & (1 + \Delta_n [f]_{\rm Lip})^2 + (\Delta_n [f]_{\rm Lip})^2   \le   e^{2\Delta_n  [f]_{\rm Lip} }.
\end{eqnarray*}
Then, using the conditional Schwarz inequality and again  the contraction property of conditional expectation, we get
 \begin{equation}  \label{ControlSecondTerm}
\Big\|\mathds{\hat E}_k \big[ \big(\tilde Y_{k+1} -\hat Y_{k+1} \big) \big(1 - \Delta_n \hat B_k -  \hat C_k  \Delta W_{t_{k+1}} \big) \big] \Big\|_2\le e^{\Delta_n  [f]_{\rm Lip}}\| \tilde Y_{k+1} - \hat Y_{k+1}\|_2.
 \end{equation}
Using   Schwarz's Inequality  for the $L^2$-norm, we derive from~\eqref{EqFirstDecomp},~\eqref{EqTermsToControl},~\eqref{ControlFirstTerm} and~\eqref{ControlSecondTerm}  that
\begin{eqnarray}  \label{EqKeyProperty}
 \Vert  \tilde  Y_k - \hat Y_k \Vert_2^2   & = &     \Vert   \tilde Y_k - \hat {\mathds E}_k(\tilde Y_k)  \Vert_2^2    + \Vert  \hat{ \mathds E}_k( \tilde Y_k - \hat Y_k) \Vert_2^2 \\
\nonumber  & \leq  &  [y_k]_{\rm Lip}^2  \Vert  \bar X_k - \hat X_k \Vert_2^2+ \Big( e^{\Delta_n  [f]_{\rm Lip} }\| \tilde Y_{k+1} - \hat Y_{k+1}\|_2+ \Delta_n \Vert  f_k(\bar X_k, \mathds E_k(\tilde Y_{k+1}), \tilde \zeta_k)   \\
 \nonumber  & & -  f_k(\hat X_k, \hat{\mathds E}_k( \tilde Y_{k+1}), \hat{\mathds E}_k( \tilde \zeta_k)) \Vert_2\Big)^2\\
 \nonumber& \leq  &  [y_k]_{\rm Lip}^2  \Vert  \bar X_k - \hat X_k \Vert_2^2 + \Big( e^{\Delta_n  [f]_{\rm Lip} }\| \tilde Y_{k+1}  - \hat Y_{k+1}\|_2\\
\nonumber   && \hskip 4 cm + \Delta_n   \Big(C_{1,k}(b,\sigma,T,f) +\frac{C_{2,k}(b,\sigma,T,f)}{\Delta_n}\Big)^{\frac 12} \Vert  \bar X_k  - \hat X_k \Vert_2     \Big)^2.
\end{eqnarray}
We first deal with the second term on the right hand side of the above inequality. Using  the classical inequality
\[
(a+b)^2  \le a^2(1+\Delta_n)+b^2\big(1+\Delta_n^{-1}\big),
\]
we derive that
\begin{eqnarray*} 
&& \Big( e^{\Delta_n  [f]_{\rm Lip} }\| \tilde Y_{k+1} - \hat Y_{k+1}\|_2 +  \Delta_n   \Big(C_{1,k}(b,\sigma,T,f) +\frac{C_{2,k}(b,\sigma,T,f)}{\Delta_n}\Big)^{\frac 12} \Vert  \bar X_k  - \hat X_k \Vert_2     \Big)^2 \\
&  & \le e^{\Delta_n  [f]_{\rm Lip} }(1+\Delta_n)\| \tilde Y_{k+1} - \hat Y_{k+1}\|^2_2 \\
&& \quad + \Big(1+\frac{1}{\Delta_n}\Big)\Delta^2_n \left(C_{1,k}(b,\sigma,T,f) +\frac{C_{2,k}(b,\sigma,T,f)}{\Delta_n}\right) \Vert  \bar X_k  - \hat X_k \Vert^2_2\\
&& \le e^{\Delta_n (1+ [f]_{\rm Lip}) }\| \tilde Y_{k+1} - \hat Y_{k+1}\|^2_2 + \Big(1+ \Delta_n\Big) \Big(C_{1,k}(b,\sigma,T,f) \Delta_n+ C_{2,k}(b,\sigma,T,f) \Big) \Vert \bar X_k  - \hat X_k \Vert^2_2 .
\end{eqnarray*}
Hence (using that  $\Delta_n\le T/n_0$, if  $n \ge n_0$), we obtain, for every $k\!\in \{0,\ldots,n-1\}$,   
\begin{eqnarray}  \label{EqKeyProperty2}
\Vert   \tilde  Y_k - \hat Y_k \Vert_2^2   & \le  &  e^{\Delta_n (1+ [f]_{\rm Lip}) }\| \tilde Y_{k+1} - \hat Y_{k+1}\|^2_2  + \widetilde K_k(b,\sigma,T,f) \Vert  \bar  X_k  - \hat X_k \Vert^2_2
  \end{eqnarray} 
  where 
\begin{eqnarray*}
\widetilde K_k(b,\sigma,T,f) &:=  &[y_{k+1}]^2_{\rm Lip}       +\Big(1+\frac Tn\Big)\Big(C_{1,k}(b,\sigma,T,f)\frac{T}{n} +C_{2,k}(b,\sigma,T,f) \Big), \; k=0,\ldots,n-1,\\
&\le& K_k(b,\sigma,T,f)
 \end{eqnarray*}
It follows that, for every $k\!\in \{0,\ldots,n-1\}$,  
\[
e^{\Delta_n k(1+[f]_{\rm Lip})} \Vert  \tilde Y_k - \hat Y_k \Vert_2^2\le e^{\Delta_n(k+1)(1+[f]_{\rm Lip})}\| \tilde Y_{k+1} - \hat Y_{k+1}\|^2_2 
+  e^{\Delta_n k(1+[f]_{\rm Lip})} K_k(b,\sigma,T,f) \Vert \bar X_k  - \hat X_k \Vert^2_2.
\]
Keeping in mind that  $\Vert  \tilde Y_{n}  - \hat Y_{n}  \Vert_{2}^2  \leq [h]_{\rm Lip}^2  \Vert  \bar X_n - \hat X_n \Vert_2^2$,  we  finally derive by a  backward induction~that  
 \begin{equation*}
   \big \Vert  \tilde Y_k - \hat Y_k \big \Vert _2^2     \leq     \sum_{i=k}^{n}  e^{\Delta_n(i-k)(1+[f]_{\rm Lip})}  K_i(b,\sigma,T,f)  \big \Vert  \bar X_i -\hat X_i  \big \Vert_2^2.
   \end{equation*}

\smallskip
\noindent $(b)$ We derive from the very  definition  of $\tilde \zeta_k$ and $\hat \zeta_k$  that 
\[
\tilde \zeta_k -  \hat \zeta_k = \big(\tilde \zeta_k -\hat{\mathds E}_k(\tilde \zeta_k) \big) \stackrel{\perp}{+} \big(\hat{\mathds E}_k(\tilde \zeta_k)- \hat \zeta_k\big)
\]
where  $\stackrel{\perp}{+} $ means that both random variables are $L^2$-orthogonal. We know from~\eqref{eq:zeta} that 
\[
\big\Vert  \hat{ \mathds E}_k(\tilde \zeta_k -\hat{\mathds E}_k(\tilde \zeta_k)) \big\Vert_2^2\le \frac{ [z_k]^2_{\rm Lip}}{\Delta_n} \big\Vert \bar X_k-\hat X_k\big\Vert_2^2.
\]
On the  other hand, as $\sigma (\hat X_k)\subset \sigma(\bar X_k)\subset {\cal F}_k$, it is clear that $\hat{\mathds E}_k(\tilde \zeta_k)= \frac{1}{\Delta_n}\hat{\mathds E}_k(\tilde Y_{k+1}\Delta W_{t_{k+1}})$ so that
\[
\big\Vert \hat{\mathds E}_k(\tilde \zeta_k)- \hat \zeta_k\big\Vert_2^2  =  \frac{1}{\Delta^2_n}\big\Vert \hat{\mathds E}_k\big((\tilde Y_{k+1}- \hat Y_{k+1})\Delta W_{t_{k+1}}\big)\big\Vert_2^2.
\]
Conditional Schwarz's Inequality applied with $\hat \E_k$ implies that 
$$
\hat{\mathds E}_k\big((\tilde Y_{k+1}- \hat Y_{k+1})\Delta W_{t_{k+1}}\big)^2\le \big(\hat{\mathds E}_k(\tilde Y_{k+1}- \hat Y_{k+1})^2\big)\Delta_n
$$
which in turn implies that 
\[
\big\Vert \hat{\mathds E}_k(\tilde \zeta_k)- \hat \zeta_k\big\Vert_2^2  =  \frac{1}{\Delta_n}\big\Vert \tilde Y_{k+1}- \hat Y_{k+1}\big\Vert_2^2
\]
so that, finally,
\[
\hskip 2cm \Delta_n\big \Vert \hat{\mathds E}_k(\tilde \zeta_k)- \hat \zeta_k\big\Vert_2^2  \le \frac{C_{2,k}(b,\sigma, T,f)}{[f]^2_{\rm Lip}}\big \Vert \bar X_k-\hat X_k\Vert_2^2+ \Vert \tilde Y_{k+1}- \hat Y_{k+1}\big\Vert_2^2.\hskip 2cm \Box
\]

\begin{rem}
The key property leading to  Theorem~\ref{TheoremPrincBsde}  and allowing to improve the  existing results for similar problems (see $e.g.$~\cite{BalPag}) is the Pythagoras like  equality~(\ref{EqKeyProperty}) which is true only for the quadratic norm.  This  equality  is the key to get  the sharp constant equal to $1$ before  the term   $ \Vert  \hat{ \mathds E}_k(\tilde Y_k - \hat Y_k) \Vert_2^2$.  
\end{rem}

\subsection{Computing  the $\hat \zeta_k$ terms}

Recall that  for every $k\!\in \{0,\ldots,n-1\}$,  the $\mathbb R^q$-valued random vector $\hat \zeta_k= (\hat \zeta_k^1,\ldots, \hat \zeta_k^q)$ reads
\[
\hat \zeta_k= \frac{1}{\Delta_n}  \hat z_k (\hat X_k) \quad\mbox{ where }\quad \hat z_k (\hat X_k) =  \hat{\mathds E}_k(\hat Y_{k+1}   \Delta W_{t_{k+1}})
\]
with $\hat z_k : \Gamma_k\to \mathbb R^q$ is a Borel function ($\Gamma_k$ is  the grid used to quantize $\bar X_k$). As $\hat Y_{k+1}=\hat   y_{k+1}(\hat X_{k+1})$ we easily derive that the function $\hat z_k$ is defined on $\Gamma_k= \{x^k_1,\ldots,x_k^{N_k}\}$ by
the ($\mathbb R^q$-valued) weighted  sum 
\[
\hat z_k(x^k_i) = \sum_{j=1}^{N_{k+1}}    \hat y_{k+1}(x^{k+1}_j) \, \pi^{W,k}_{ij}
\]  
where, for every $(i,j)\!\in \{1,\ldots,N_k\}\times \{1,\ldots,N_{k+1}\}$, $\pi^{W,k}_{ij}$ is an $\mathbb R^q$-valued vector given by 
\[
\pi^{W,k}_{ij} = \frac{1}{ \mathds P(\hat X_k=x^k_i)} \times  \mathbb{E}\Big(\Delta W_{t_{k+1}}\mbox{\bf 1}_{\{ \hat X_{k+1}=x^{k+1}_j,\, \hat X_k=x^k_i\}}\Big).
\]
 
These vector valued ``weights"    appear as new companion parameters (as well as the original weights $\pi^k_{ij}$ of the quantized transition matrices) which can be computed {\em on line} when simulating the Euler scheme  of the diffusion by a Monte Carlo simulation.

Note that, for every $k\!\in \{0,\ldots,n-1\}$ and  every $i\!\in \{1,\ldots,N_k\}$, 
\begin{eqnarray*}
\sum_{j=1}^{N_{k+1}} \pi^{W,k}_j = \hat{\mathds E}_k\big(\Delta W_{t^n_{k+1}}\mbox{\bf 1}_{\{\hat X_k = x^k_i\}}\big) &=&   \hat{\mathds E}_k\Big({\mathds E}_k \big(\Delta W_{t^n_{k+1}}\mbox{\bf 1}_{\{\hat X_k = x^k_i\}}\big)\Big)\\
&=& \hat{\mathds E}_k\Big({\mathds E}_k \big(\Delta W_{t^n_{k+1}}\big){\mathds P}\big(\hat X_k = x^k_i\big)\Big) =  \hat{\mathds E}_k\, 0=0.
\end{eqnarray*}
As a consequence, an alternative formula for $\hat z_k$  can be
\[
\hat z_k(x^k_i) = \sum_{j=1}^{N_{k+1}} \pi^{W,k}_{ij}\big(\hat y_{k+1}(x^{k+1}_j)-\hat y_k(x^k_i)\big).
\]

\section{Background and new results on   optimal vector quantization}  \label{SecOptiQuant}
 
 It is important to have in mind that all what precedes holds true for any quantizations $\hat X_{t_k}$ of the Euler scheme $\bar X_{t_k}$ $i.e.$ for any sequence of the form $\hat X_{t_k} = \pi_k( \bar X_{t_k})$ where $\pi_k:\mathbb R^d \to \mathbb R$ is Borel and $\pi_k(\mathbb R^d)$ is finite. In fact the theory of optimal vector quantization starts when tackling the problem of minimizing the $L^2$ (and more generally the $L^r$)-mean quantization error induced by this substitution, namely $\Vert \bar X_{t_k} -  \hat X_{t_k} \Vert_2$, which in turn will provide the lowest possible  error bounds for quantization based numerical schemes. This question is in fact a very old question that goes back to the1940's,  motivated by Signal transmission and processing. These techniques have been imported in Numerical Probability, originally for numerical integration by cubature formulas,  in the 1990's (see~\cite{Pag} or~\cite{Cher2}). 
 
\subsection{Short background} 
 Let      $X:(\Omega,\mathcal{A},\mathbb{P})\to \mathbb{R}^{d}$ be a    random vector lying in $L^r(\mathbb{P})$,  $r\!\in (0,+\infty)$.  The $L^r$-optimal quantization  problem of size $N$ for  $X$ (or equivalently  for  its  distribution $\mathbb P_X$) consists in  finding  the best $L^r(\mathbb{P})$-approximation of  $X$   by a random variable   $\pi(X)$  taking  at most  $N$ values.  The integer $N$ is called the {\em quantization  level}.
 
  First, we associate  to every Borel function $\pi :\mathbb R^d\to \mathbb R$ taking at most $N$ values  the induced $L^r(\mathbb P)$-mean   error  $\Vert  X - \pi(X)\Vert_r$ (where ${\Vert X \Vert}_{r} := {\left(\mathbb E \vert X \vert ^r \right)}^{1/r}$ is the usual $L^r(\mathbb{P})$-norm  on $(\Omega,\mathcal{A})$ induced by the norm $|\,.\,|$ on $\mathbb R^d$ ({\em a  priori} any norm, but always  the canonical Euclidean norm in this paper and in most applications). Note that when $r\!\in(0,1)$, the terms ``norm" is an abuse of language  since $L^r(\mathbb{P})$ is only a metric space metrized by $\|X-Y\|_{_r}^r$. 
 As a consequence,  finding the best approximation of $X$ in the earlier described sense   boils down   to solve the following minimization problem:
 $$ 
e_{N,r}(X) = \inf{\Big\{ \Vert X - \pi(X) \Vert_{_r}, v: \mathbb{R}^d \rightarrow \Gamma,\, \Gamma \subset \mathbb{R}^d,  \textrm{ card}(\Gamma) \leq N \Big\}}
$$
where $\textrm{ card}(\Gamma)$ denotes the cardinality of the set $\Gamma$ (commonly called {\em grid} or  {\em codebook} depending on the field of application.  It is clear that for every  grid  $\Gamma =\{x_1, \ldots, x_N  \} \subset \mathds R^d$,  for any Borel function $\pi:\mathds R^d \rightarrow \Gamma$,
\[
\vert   \xi -\pi(\xi)  \vert   \geq {\rm dist}(\xi,\Gamma) =\min_{1 \le i \le N} \vert  \xi - x_i \vert.
\]
Equality holds if and only if  $\pi$ is a  Borel nearest neighbor projection $\pi_{\Gamma}$ defined by 
\[
\pi_{\Gamma}(\xi)  = \sum_{i=1}^N x_i \mbox{\bf 1}_{C_i(\Gamma)}(\xi)
\]
where $(C_i(\Gamma))_{i=1, \ldots,N}$ is a Borel partition of $\mathds R^d$ satisfying 
%
 $$
 \forall\, i \in \{1,\ldots,N\},\quad C_i(\Gamma) \subset \big\{ \xi \in \mathbb{R}^d : \vert \xi-x_i \vert = \min_{j=1,\ldots,N}\vert \xi-x_j \vert \big\}.
 $$ 
 Such a Borel partition is called a {\em Voronoi partition} (induced by $\Gamma$). The random variable $\hat X^{\Gamma}$ is called a  {\em Voronoi quantization}  of $X$ induced by $\Gamma$.  It follows that  for every $r >0$, $\Vert X - \hat X^{\Gamma} \Vert_r = \Vert {\rm dist}(X,\Gamma) \Vert_r$ does not depend on the choice of the Voronoi projection. Thus, we may denote $e_r(X,\Gamma)=\Vert X - \hat X^{\Gamma} \Vert_r$ the {\em $L^r$-mean quantization error} induced by the grid $\Gamma$ (under ${\mathbb P}_{_X}$). As a consequence, the optimal $L^r$-mean quantization error finally reads 
 \begin{equation} \label{er.quant}
e_{N,r}(X)  =  \inf{\big\{ \Vert X - \hat{X}^{\Gamma} \Vert_{_r}, \Gamma  \subset \mathbb{R}^d,  \textrm{ card}(\Gamma) \leq N\big \}}  .
\end{equation}
 

Note that for every  level $N \geq 1$, the infimum in $(\ref{er.quant})$ is in fact a minimum, $i.e.$, it is  attained   at  least at one  grid  $\Gamma_{_N}$, see $e.g.$~\cite{GraLus} or~\cite{Pag}. Any such grid  is called an  {\em $L^r$-optimal   $N$-quantizer} and the resulting  Borel nearest neighbor projection is called an  {\em $L^r$-optimal   $N$-quantization}. It should be noticed as well that $e_{N,r}(X)$ is 
entirely characterized by the distribution $\mathbb P_{_X}$ of $X$, hence will be often denoted by $e_{N,r}(\mathbb P_{_X})$.

One shows that if  $\textrm{card(supp}(\mathbb P_X)) \geq N$ then  any  optimal $N$-quantizer is of full size  $N$. Furthermore  (see again~\cite{GraLus} or~\cite{Pag}),  the   optimal $L^r$-mean quantization error $e_{N,r}(X)$ at level $N$ 
decreases to $0$ as $N$ goes to infinity. Its rate of convergence is ruled by the so-called Zador Theorem  recalled below, in which, $|\,.\,|$ {\em temporarily} may denote {\em any} norm on $\mathbb R^d$.  

\begin{thm}{\bf Zador's Theorem}  \label{ThemZadPierce} $(a)$ Sharp asymptotic rate (see~\cite{GraLus}): Let $X:(\Omega, {\cal A}, \mathbb P)\to \mathbb R^d$ be a  random  vector  such that  $ X \in L^{r+\delta}(\mathbb P)$  for some real number $\delta>0$ and  let $\mathbb P_X=\varphi.\lambda_d+ \mathbb P_{_X}^s$ 
denote the canonical Lebesgue decomposition of  $\mathbb P_X$
where 
$\mathbb P_{_X}^s$ stands for  the singular part of  $\mathbb P_X$. Then 
\begin{equation}\label{eq:Zador1}
 \lim_{N \rightarrow +\infty} N^{r/d} (e_{N,r}(\mathbb P_{_X}))^r = J_{r,d} \, \Vert \varphi \Vert_{\frac{d}{d+r}} \!\in [0,+\infty)
  \end{equation} 
  
  \vskip -0.75cm
\begin{equation}\label{eq:Zador2}
\hskip -0.1cm\mbox{with }\quad  \Vert \varphi \Vert_{\frac{d}{d+r}} = \left( \int_{\mathbb{R}^d} \varphi^{\frac{d}{d+r}} d\lambda_d \right)^{\frac{d+r}{d}}
 \; \mbox{  and }\; J_{r,d, |.|} = \inf_{N \geq 1} N^{r/d} e_{N,r}^r(U([0,1]^d)) \in (0,+\infty)  
 \end{equation}
($ U([0,1]^d)$  denotes the uniform distribution on the hypercube  $[0,1]^d$).  

\smallskip
\noindent $(b)$ Non-asymptotic bound (see~\cite{GraLus,LUPAaap}).   Let $r'>r$. There exists a universal real constant $C_{r,r',d}\!\in (0,+\infty)$ 
such that, for every $\mathds R^d$-valued   random vector $X$,
\[
\forall\, N\ge 1,\quad e_{N,r}(\mathbb P_{_X}) \le C_{r,r',d}\,\sigma_{r'}(X) N^{-\frac 1d}
\]
where $\sigma_{r'}(X):= \inf_{a\in \mathds R^d}\|X-a\|_{r'}\leq +\infty$ is the $L^{r'}$-(pseudo-)standard deviation of $X$.
\end{thm}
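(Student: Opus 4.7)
The plan is to prove parts (a) and (b) separately using the standard framework of Graf--Luschgy, since these are classical facts that the paper imports as Theorem~\ref{ThemZadPierce}. For (a) the argument is in three stages: (i) establish existence, finiteness and positivity of the constant $J_{r,d}$ by reducing the uniform distribution on $[0,1]^d$ to itself by a self-similarity tiling argument; (ii) extend the sharp rate from $U([0,1]^d)$ to an arbitrary compactly supported absolutely continuous distribution by a fine partitioning plus an optimal allocation of point budgets; and (iii) treat unbounded support and the singular component $\mathbb{P}_{_X}^s$ by truncation, using the extra moment $\delta>0$ to absorb the tail. For (b) I would run a random quantizer construction based on i.i.d.\ samples from an auxiliary density tailored to $\mathbb{P}_{_X}$, then concentrate.

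\textbf{Step (i): the constant $J_{r,d}$.} To see finiteness, take the regular grid $\{1/(2n),\,3/(2n),\dots,(2n-1)/(2n)\}^d$ with $N=n^d$ points, which yields $N^{r/d} e_{N,r}^r(U([0,1]^d))\le C_d$ independent of $n$. For positivity, in any Voronoi partition the average cell volume is $1/N$, and on each Voronoi cell $C_i$ the local $L^r$-mean error exceeds $c_d\,\lambda_d(C_i)^{1+r/d}$ by the ball-optimality principle (the centered ball minimizes $\int \vert x-x_i\vert^r$ at fixed volume), hence by Jensen's inequality $e_{N,r}^r \ge c_d\,N^{-r/d}$. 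Convergence of the sequence $a_N:=N^{r/d}e_{N,r}^r(U([0,1]^d))$ comes from the tiling argument: partition $[0,1]^d$ into $k^d$ subcubes of side $1/k$ and place in each a rescaled copy of an $L^r$-optimal $N$-quantizer; this produces a grid of size $Nk^d$ whose error satisfies $a_{Nk^d}\le a_N$. Combined with $\liminf$ this yields the limit $J_{r,d}\!\in(0,+\infty)$.

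\textbf{Step (ii): the compactly supported AC case.} Cover $\operatorname{supp}\varphi$ by a fine partition into cubes $(A_i)_{1\le i\le m}$ of side $\eta$ on which $\varphi$ is approximately equal to some $\varphi_i$. If one dedicates $N_i$ points to $A_i$ with $\sum_i N_i\le N$, the local error reads asymptotically $\varphi_i\,\lambda_d(A_i)\cdot J_{r,d}\cdot(\lambda_d(A_i)/N_i)^{r/d}$ by Step (i) applied to the rescaled uniform law on $A_i$. The global error is then minorized and majorized by
\begin{equation*}
J_{r,d}\,\sum_{i=1}^{m}\varphi_i\,\lambda_d(A_i)^{1+r/d}\,N_i^{-r/d}.
\end{equation*}
Optimizing under the constraint $\sum_i N_i\le N$ via Lagrange multipliers gives the allocation $N_i\propto\bigl(\varphi_i^{d/(d+r)}\lambda_d(A_i)\bigr)$, and the optimized value equals $J_{r,d}\,\bigl(\sum_i \varphi_i^{d/(d+r)}\lambda_d(A_i)\bigr)^{(d+r)/d}\,N^{-r/d}$. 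Letting $\eta\to 0$, the bracket converges to $\Vert\varphi\Vert_{d/(d+r)}$ by a dominated convergence argument that is legitimate precisely because $\varphi\!\in L^{d/(d+r)}$ follows from $X\!\in L^{r+\delta}$. The matching lower bound comes from the empirical measure convergence of $L^r$-optimal quantizers (the ``Bucklew--Wise'' lemma), applied to the normalized counting measure of the optimal grid.

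\textbf{Step (iii) and part (b).} To pass from compactly supported to general $\mathbb{P}_{_X}$ with singular part, split $X=X\mathbf{1}_{\{|X|\le R\}}+X\mathbf{1}_{\{|X|>R\}}$; by Markov the tail contributes $o(N^{-r/d})$ uniformly in $R$ large enough thanks to the $L^{r+\delta}$ moment, and the singular mass inside $\{|x|\le R\}$ contributes negligibly because on sets of Lebesgue measure zero one can place $o(N)$ points that produce error $o(N^{-r/d})$. Part (b) follows from a random quantizer argument of Pierce type: construct $N$ i.i.d.\ samples from an auxiliary probability density $\psi$ on $\mathbb{R}^d$ (for instance $\psi\propto(1+|x|)^{-(d+r')}$), then by Fubini and a standard integration of $\mathbb{P}(\min_i|X-\Xi_i|>t)=(1-\int_{B(X,t)}\psi)^N$, the expected $L^r$-mean error $\mathbb{E}\,e_{r}(X,\{\Xi_1,\dots,\Xi_N\})^r$ is bounded by $C_{r,r',d}\,\Vert X-a\Vert_{r'}^r\,N^{-r/d}$ uniformly in $a$, so that taking the infimum over $a$ and selecting one realization yields the claim. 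The hard part is the sharp constant in step (ii) lower bound (empirical measure convergence), and the Fubini estimate in part (b) which must be done carefully enough to lose no logarithmic factor; once these are in hand, the remaining estimates are routine.
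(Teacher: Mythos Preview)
The paper does not provide its own proof of Theorem~\ref{ThemZadPierce}: both claims are stated as imported results, with explicit citations to \cite{GraLus} for part~(a) and to \cite{GraLus,LUPAaap} for part~(b), and no argument is given in the text. Your sketch is therefore not being compared against anything in the paper itself.

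That said, your outline is a faithful summary of the standard Graf--Luschgy proof strategy: the self-similarity/tiling argument for $J_{r,d}$, the cube-partition plus optimal budget allocation for the compactly supported absolutely continuous case, the truncation via the $L^{r+\delta}$ moment to handle tails and the singular part, and the Pierce-type random quantizer construction for part~(b). Two points deserve a flag. First, in Step~(ii) you correctly identify the lower bound (via the limiting empirical distribution of optimal grids) as ``the hard part''; in the Graf--Luschgy framework this is genuinely the most delicate ingredient and your sketch does not indicate how it is established beyond naming it. Second, the assertion that $\varphi\!\in L^{d/(d+r)}$ follows from $X\!\in L^{r+\delta}$ is true but not entirely immediate; it is the reverse-H\"older argument alluded to in the paper's remark following Theorem~\ref{thm:Mismatchnew}. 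With those caveats, the proposal matches the approach of the cited references and is correct as a high-level sketch.
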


\noindent {\bf Numerical aspects (few words about).} From the numerical probability viewpoint, finding an optimal $N$-quantizer  $\Gamma$ is a challenging task, especially in higher dimension ($d\ge 2$). In this paper as in many applications we will mainly focus on the quadratic case $r=2$.  Note that, in practice, $|\,.\,|$ will be  the canonical Euclidean norm on $\mathbb R^d$ for numerical implementations.

\smallskip The key property to devise    procedures  to search for optimal quantizers rely on the following differentiability property of the squared quadratic quantization error (also known as quadratic distortion function) for a fixed level $N$ (and with respect to the canonical Euclidean norm). First, we define  the distortion function $ D_{N,2}$ (which is defined on $(\mathbb R^d)^N$ and not on the set of grids of size at most $N$) by:
\begin{eqnarray}\label{EqDistor}
\forall\, x=(x_1, \ldots,x_{_N})\!\in (\mathbb R^d)^N,\quad D^X_{N,2} (x)  &= &\int_{\mathbb{R}^d} \min_{1 \le i \le N}  \vert \xi - x_i\vert^2 d\mathbb P_X(\xi).   
\end{eqnarray}
To  an $N$-tuple $x=(x_1, \ldots,x_N) \!\in (\mathbb R^d)^N$ we associate its   grid  of values $\Gamma^x= \{x_1,\ldots,x_{_N}\}$ so that  $D^X_{N,2} (x)  = \Vert  X - \hat X^{\Gamma^x}\Vert_2^2$. In particular, it is clear that
\[
e_{N,2} (\mathbb P_{_X}) = \inf_{x \in (\mathds R^d)^N} D_{N,2} (x)
\]
since an $N$-tuples  can contain repeated values. 
\begin{prop}[see Theorem~4.2 in~\cite{GraLus}] \label{PropDifferentiability}$(a)$ 
The function $D_{N,2}$ is differentiable  at any $N$-tuple $x \in (\mathbb R^{d})^N$ having pairwise distinct components  and satisfying the following boundary negligibility assumption:
$$ 
\mathbb{P}_{_X}\big(\cup_{1\le i\le N} \partial  C_i(\Gamma^{x} )\big)=0.  
$$ 
Its gradient is given by 
\begin{equation}\label{EqStation}
 \nabla D^X_{N,2}(x)  =  2  \Big(  \int_{C_i(\Gamma^x) }  (x_i - \xi ) d \mathbb P_X (\xi)  \Big)_{i=1,\ldots,N}.
\end{equation}

\noindent $(b)$ The above negligibility assumption on the Voronoi partition boundaries does not depend on the selected partition. It holds in particular  when the distribution of $X$ is {\em strongly  continuous} $i.e.$ assigns no mass to hyperplanes and, for any distribution $\mathbb P_{_X}$ such that $\textrm{card(supp}(\mathbb P_X)) \geq N$,  when $x\!\in {\rm argmin }  D_{N,2}$. 
\end{prop}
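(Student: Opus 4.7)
For part $(a)$, the plan is to differentiate under the integral sign. Fix a base point $x=(x_1,\ldots,x_N)$ with pairwise distinct components satisfying the negligibility assumption. For each $\xi \in \mathbb{R}^d$, the map $\psi_\xi: y \mapsto \min_{1 \le i \le N} |\xi - y_i|^2$ is the pointwise minimum of $N$ smooth quadratic functions, hence locally Lipschitz. If $\xi$ lies in the interior of $C_i(\Gamma^x)$, then $|\xi - x_i| < |\xi - x_j|$ for all $j \neq i$, and by continuity this strict inequality persists for $y$ varying in a small ball around $x$ (using the pairwise distinctness of the $x_j$). On that neighborhood, $\psi_\xi(y) = |\xi - y_i|^2$, which is smooth in $y$ with partial gradient $2(y_i - \xi)$ on the $i$-th slot and zero elsewhere. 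The negligibility assumption then ensures this smoothness holds for $\mathbb{P}_X$-almost every $\xi$.

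Exchanging derivative and integral is justified by dominated convergence on a bounded neighborhood of $x$: for $y$ in the closed ball of radius $r$ around $x$, one has $|2(y_i - \xi)| \le 2(|\xi| + \max_j |x_j| + r)$, and since $X \in L^2(\mathbb{P})$, this dominating function is $\mathbb{P}_X$-integrable. Integrating the pointwise partial derivatives cell by cell recovers the stated expression for $\nabla D^X_{N,2}(x)$.

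For part $(b)$, the partition-independence rests on the inclusion $\bigcup_i \partial C_i(\Gamma^x) \subset \bigcup_{i \neq j} H_{ij}$, where the perpendicular bisector $H_{ij} := \{\xi : |\xi - x_i| = |\xi - x_j|\}$ depends only on $\Gamma^x$, not on how equidistant points are attributed by the chosen Voronoi partition. Under strong continuity, each of the finitely many hyperplanes $H_{ij}$ has zero $\mathbb{P}_X$-mass, giving the first assertion. The optimality case, in which $\textrm{card(supp}(\mathbb{P}_X)) \ge N$ forces distinct components of $x^*$ (since colliding centers can be spread to decrease distortion), I would handle by contradiction: if $\bigcup_i \partial C_i(\Gamma^{x^*})$ carried positive $\mathbb{P}_X$-mass, some $\mathbb{P}_X\big(H_{ij} \cap \partial C_i(\Gamma^{x^*})\big)>0$, and an infinitesimal shift of $x_i^*$ along $x_j^* - x_i^*$ would transfer that mass strictly into $C_i$ or $C_j$, producing a first-order decrease in $D_{N,2}$. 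The main obstacle is making this perturbation step precise: at such an $x^*$ the distortion is only one-sided differentiable, so the argument must compare left and right directional derivatives along $x_j^* - x_i^*$ and show their strict mismatch (by an amount proportional to the boundary mass) is incompatible with $x^*$ being a minimizer — equivalently, using the subdifferential of the convex maps $y_i \mapsto |\xi - y_i|^2$, the inclusion $0 \in \partial D_{N,2}(x^*)$ forces the boundary mass to vanish.
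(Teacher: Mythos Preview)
Your approach to part~$(a)$ is correct and matches exactly what the paper indicates: the paper provides no detailed argument but simply states that the result follows from interchanging differentiation and integration in the distortion integral, citing~\cite{GraLus, Pag}. Your dominated-convergence justification fills this in appropriately.

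For part~$(b)$ the paper again gives nothing beyond the citation, so there is little to compare against; your outline is sound, but the partition-independence step has a small gap. The inclusion $\bigcup_i \partial C_i(\Gamma^x) \subset \bigcup_{i\neq j} H_{ij}$ alone does not show that $\bigcup_i \partial C_i(\Gamma^x)$ is the \emph{same} set for every admissible Voronoi partition --- a set can sit inside a fixed partition-independent superset and still vary with the choice of partition. The clean fix: for any Voronoi partition one has $V_i^\circ \subset C_i \subset V_i$, where $V_i$ is the closed Voronoi cell and $V_i^\circ$ its (nonempty, since $x_i\in V_i^\circ$) interior; as $V_i$ is closed convex with nonempty interior, $\overline{V_i^\circ}=V_i$, which forces $\overline{C_i}=V_i$ and $\mathrm{int}(C_i)=V_i^\circ$, hence $\partial C_i = \partial V_i$ exactly. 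Thus $\bigcup_i \partial C_i$ is literally the Voronoi skeleton, independent of how ties are broken. Your strong-continuity argument and the one-sided derivative/subdifferential reasoning you sketch for the optimality case are the standard route taken in~\cite{GraLus}.
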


The result  is a consequence of the interchange of  the differentiation and the integral leading to~\eqref{EqStation} when formally differentiating~\eqref{EqDistor} (see~\cite{GraLus, Pag}). Consequently,  any $N$-tuple $x \!\in {\rm argmin }   D_{N,2}$ satisfies 
\[
\nabla D_{N,2}(x)=0. 
\]
Note that this equality also reads, still under the assumption $\textrm{card(supp}(\mathbb P_X)) \geq N$,
\begin{equation*}
\mathbb{E} \left(X  \vert  \hat{X}^{\Gamma} \right) =\hat{X}^{\Gamma}.
\end{equation*}
%
%


%
%
All numerical methods to compute optimal quadratic quantizers  are based on this result:
recursive procedures like Newton's algorithm (when $d=1$), randomized fixed point procedures  like Lloyd's~I algorithms (see $e.g.$~\cite{GerGra, PagYu})     or recursive stochastic gradient descent like   the Competitive Learning Vector Quantization (CLVQ) algorithm (see~\cite{GerGra, Pag} or~\cite{PagPri03}) in the multidimensional framework.  However note that in higher dimension this equation has several  solutions (called {\em stationary quantizers}) possibly sub-optimal.  Optimal  quantization grids  associated to   the multivariate Gaussian random vector can be downloaded from  the website {\tt www.quantize.math-fi.com}. For more details about numerical methods we refer to the recent survey~\cite{PagCER} and the references therein.

\subsection{Distortion mismatch: $L^s$-robustness of $L^r$-optimal quantizers}
The distortion mismatch problem is the following: when does an $L^r$-optimal sequence of quantizers $(\Gamma_N)_{N\ge 1}$ for a random variable $X$ remain $L^s$-rate optimal for some $s>r$ (if $X\!\in L^s$)~? Or in more mathematical terms, if $X\!\in L^s$, $s>r$, when do we have for such a sequence of $L^r$-optimal quantizers
\[
\limsup_N  N^{\frac{1}{d}}e_s( \Gamma_N,X) <+\infty~?
\]
This problem has obvious applications in numerical probability since, for algorithmic reasons, one usually has access to optimal quadratic quantizers (see $e.g.$ the website \url{www.quantize.maths-fi.com}) whereas they are currently used in  a non quadratic framework. What  will be done  in Section~\ref{SecNonLinearFilt}  for nonlinear filtering is precisely to take advantage of this result to strongly relax some growth assumptions on the conditional densities involved in the Kallianpur-Striebel formula.


The distortion mismatch problem was first addressed in~\cite{GraLusPag1} for various classes of distributions on $\mathds R^d$, in particular  for distributions having a  {\em radial density} satisfying (an almost necessary) moment assumption   of  order higher than $s$. In the theorem below we  extend this result to  {\em all random vectors satisfying   this moment condition}.

\begin{thm}[$L^r$-$L^s$-distortion mismatch]\label{thm:Mismatchnew}  Let $X:(\Omega,{\cal A}, \mathds P)\to \mathds R^d$ be a random vector and let $r\!\in (0 +\infty)$. Assume that the distribution $\mathds P_{_X}$ of $X$ has a non-zero absolutely continuous component with density $\varphi$. Let  $(\Gamma_N)_{N\ge 1}$ be a sequence of  $L^r$-optimal grids and let $s\!\in (r,r+d)$. If 
 
 \begin{equation} \label{EqLrLsProblem}
 X \!\in L^{\frac{sd}{d+r-s}+\delta}(\Omega,{\cal A}, \mathds P) 
 \end{equation}
for some $\delta>0$, then
 \begin{equation} \label{eq:LrLsrobust}
\limsup_N  N^{\frac{1}{d}}e_s( \Gamma_N,X) <+\infty.
  \end{equation}
\end{thm}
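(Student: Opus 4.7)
The plan is to reduce the $L^s$-bound to the two endpoints $L^r$ and $L^{\eta}$ (with $\eta = sd/(d+r-s)$) via a cellwise interpolation, and then to control each endpoint separately using the $L^r$-optimality of $\Gamma_N$ together with the moment hypothesis.

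\smallskip
\noindent\emph{Step 1: Interpolation and the $L^r$-endpoint.} Let $(C_i^N)_{i=1,\ldots,N}$ be the Voronoi partition of $\Gamma_N = \{x_1^N,\ldots,x_N^N\}$. Set $\eta = sd/(d+r-s)$, which is finite and strictly greater than $s$ because $s < r+d$, and $\lambda = r/(d+r) \in (0,1)$. A direct computation using $\eta(d+r-s) = sd$ shows that $\tfrac{1}{s} = \tfrac{\lambda}{r}+\tfrac{1-\lambda}{\eta}$. Applying log-convexity of $L^p$-norms on each Voronoi cell to the function $\xi\mapsto |\xi-x_i^N|$, followed by H\"older's inequality on the sum over cells with conjugate exponents $\tfrac{r}{\lambda s}$ and $\tfrac{\eta}{(1-\lambda) s}$, yields the cellwise-to-global interpolation
\begin{equation*}
e_s(\Gamma_N,X)\le e_r(\Gamma_N,X)^{\lambda}\cdot e_\eta(\Gamma_N,X)^{1-\lambda}.
\end{equation*}
The $L^r$-endpoint is then immediate: since $\Gamma_N$ is $L^r$-optimal, $e_r(\Gamma_N,X) = e_{N,r}(\mathds{P}_X)$, and since $X\in L^{\eta+\delta}\subset L^{r+\delta'}$, Theorem~\ref{ThemZadPierce}(b) gives $e_r(\Gamma_N,X)\le C_r N^{-1/d}$.

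\smallskip
\noindent\emph{Step 2: The $L^\eta$-endpoint -- the main obstacle.} The core difficulty is to prove $e_\eta(\Gamma_N,X) = O(N^{-1/d})$, since $\Gamma_N$ is \emph{not} in general $L^\eta$-optimal. My plan is to compare $\Gamma_N$ with an auxiliary random quantizer $\tilde\Gamma_N = \{Y_1,\ldots,Y_N\}$ formed by i.i.d.\ samples with density $g$ that mixes the asymptotic $L^r$-optimal point density $\varphi^{d/(d+r)}$ with a tail-regularizing contribution (e.g.\ a uniform density on a suitably growing ball, \`a la \cite{LUPAaap}). A Bucklew--Wise--Cohort-type computation of the expected nearest-neighbor distortion gives $\mathds{E}[e_\eta(\tilde\Gamma_N,X)^\eta]\le C'_\eta N^{-\eta/d}$, where the hypothesis $X\in L^{\eta+\delta}$ is used precisely to ensure the convergence of the density tail integral (a H\"older argument shows that $\eta = sd/(d+r-s)$ is exactly the sharp moment threshold making $\int \varphi^{(d+r-s)/(d+r)}\,d\xi$ convergent under this assumption). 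This random-quantizer bound is then transferred to the deterministic $L^r$-optimal grid $\Gamma_N$ via a cellwise micro-macro argument exploiting the $L^r$-stationarity of $\Gamma_N$ (each $x_i^N$ is a local $L^r$-centroid of $\mathds{P}_X$ on $C_i^N$) together with local Zador-type inequalities per cell. The assumption that $\mathds{P}_X$ has a non-zero absolutely continuous component is essential here to define the asymptotic point density of $L^r$-optimal grids.

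\smallskip
\noindent\emph{Step 3: Conclusion.} Combining the two endpoint estimates in the interpolation of Step 1 gives
\begin{equation*}
e_s(\Gamma_N,X)\le C\, N^{-\lambda/d}\cdot N^{-(1-\lambda)/d} = C\, N^{-1/d},
\end{equation*}
whence $\limsup_N N^{1/d} e_s(\Gamma_N,X)<+\infty$. The entire difficulty is concentrated in Step~2: controlling the $L^\eta$-distortion of a grid optimized only at the $L^r$-level is where the strong hypothesis $X\in L^{\eta+\delta}$ (rather than merely $L^{s+\delta}$, which would suffice for the plain Zador-Pierce non-asymptotic rate) becomes indispensable.
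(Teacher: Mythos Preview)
Your Step~1 interpolation is correct (it is just the Lyapunov inequality applied to $\xi\mapsto d(\xi,\Gamma_N)$), but Step~2 contains a genuine and unfixable gap.

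The claim $e_\eta(\Gamma_N,X)=O(N^{-1/d})$ with $\eta=sd/(d+r-s)$ is \emph{stronger} than the theorem you are trying to prove, since $\eta>s$, and it is in fact false in general. Observe that as $s$ ranges over $(r,r+d)$, the exponent $\eta$ ranges over $(r,+\infty)$; in particular $\eta>r+d$ as soon as $s>(r+d)^2/(2d+r)$. But $r+d$ is precisely the critical threshold for distortion mismatch: for an $L^r$-optimal sequence $(\Gamma_N)$ the points are asymptotically distributed with density proportional to $\varphi^{d/(d+r)}$, and a heuristic cellwise computation gives $e_\eta(\Gamma_N,X)^\eta\asymp N^{-\eta/d}\int \varphi^{(d+r-\eta)/(d+r)}\,d\lambda_d$. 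For $\eta>r+d$ the exponent $(d+r-\eta)/(d+r)$ is negative, so if $\varphi$ has unbounded support (e.g.\ any polynomial-tail density compatible with your moment hypothesis $X\in L^{\eta+\delta}$) this integral diverges and $N^{1/d}e_\eta(\Gamma_N,X)\to+\infty$. This is exactly the obstruction encoded in the lower bound recalled in the remark following the theorem. So your interpolation, which needs the full rate $N^{-1/d}$ at \emph{both} endpoints, cannot close.

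The ``transfer'' mechanism you sketch (random quantizer $\to$ $L^r$-optimal grid via $L^r$-stationarity and local Zador) is not a proof: a Pierce-type random-quantizer bound at level $\eta$ gives information about \emph{some} grid, not about $\Gamma_N$, and $L^r$-stationarity says nothing about the $L^\eta$-geometry of the Voronoi cells. There is no known mechanism to push $L^r$-optimality to an $L^\eta$-rate beyond the barrier $\eta<r+d$.

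The paper's argument avoids this trap entirely. Instead of interpolating, it exploits $L^r$-optimality through the \emph{micro-macro inequality}
\[
e_r(\Gamma_N,X)^r-e_r(\Gamma_{N+1},X)^r\;\ge\; C\int \nu\big(B(\xi;c\,d(\xi,\Gamma_N))\big)\,d(\xi,\Gamma_N)^r\,\mathds P_X(d\xi),
\]
valid for any auxiliary probability $\nu$. Choosing $\nu$ with a polynomial-tail density $f_{\varepsilon,\delta}(\xi)\propto(1+|\xi|)^{-(d+\delta)}$ makes $\nu(B(\xi;t))\gtrsim t^d(1+|\xi|)^{-(d+\delta)}$, so the right-hand integrand becomes $d(\xi,\Gamma_N)^{r+d}$ times a weight, and a \emph{reverse} H\"older inequality with exponents $\tfrac{s}{r+d}\in(0,1)$ and $-\tfrac{s}{d+r-s}$ lower-bounds this by $e_s(\Gamma_N,X)^{r+d}$ --- this is where the moment condition $X\in L^{sd/(d+r-s)+\delta}$ enters, to make the negative-exponent integral finite. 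The left-hand increment is then bounded above by $\kappa N^{-1-r/d}$ (using the non-zero absolutely continuous component), and taking the $(r+d)$-th root concludes. The key point is that only $e_s$ appears, never any $e_\eta$ with $\eta>s$.
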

\begin{defn}\label{def:(r,s)} Let $r\!\in (0,+\infty)$ and $s\!\in (r,r+d)$.  A random vector {\em $X\!\in L^r(\Omega, {\cal A}, \PP)$ has an $(r,s)$-distribution (or its distribution $\mathbb P_{_X}$ is an $(r,s)$-distribution)} if~\eqref{eq:LrLsrobust} is satisfied. 
\end{defn}

Note that  if $X$ has an $(r,s)$-distribution, then it has an $(r,s')$ distribution for any $s' \!\in (r,s)$ since the $L^s$-norm is increasing in $s$.

Thus, the integrability condition~\eqref{EqLrLsProblem} appears as a criterion to have an $(r,s)$-distribution (see also the first remark after the proof of the theorem).  Note that, as expected,  $\frac{sd}{d+r-s}>s$ so that  the preservation of the $(r,s)$-property  for $s>r$ requires more than $L^s$-integrability.  Finally, if $X$ has polynomial moment at any order, then the $(r,s)$-property holds for every $s\!\in (r,r+d)$.

\bigskip
\noindent {\em Proof of Theorem~\ref{thm:Mismatchnew}.}
 \noindent  {\sc Step~1} ({\em Control of the distance to the quantizers}): Let $(\Gamma_N)_{N\ge 1}$ be a sequence of  $L^r$-optimal   quantizers. It is clear that, for every $\xi \!\in \mathds R^d$, 
 \[
 d(\xi, \Gamma_N) \le |\xi| + d(0, \Gamma_N).
 \]
 The sequence $\big(d(0, \Gamma_N)\big)_{N\ge 1}$ is bounded since $d(\Gamma_N, {\rm supp}(\mathbb P_{_X})^c)\to 0$ as $N\to +\infty$ and $d(0, {\rm supp}(\mathbb P_{_X})^c)<+\infty$. Then there exists a real constant $A_{_X}\ge 0$ such that 
 for every $\xi \!\in \mathds R^d$, 
 \[
 d(\xi, \Gamma_N) \le |\xi| + A_{_X}.
 \]

 \medskip
\noindent {\sc Step~2} ({\em Micro-macro inequality}): The optimality of the grids $\Gamma_N$, $N\ge 1$, allow to apply to the micro-macro inequality (see Equation~(3.2)  in the proof of Theorem~2 in~\cite{GraLusPag1}), namely~: for every real constant $c\!\in (0,\frac 12)$ and every $y\!\in \mathds R^d$, 
\begin{equation}\label{eq:micro-macro}
e_r(\Gamma_N,X)^r -e_r(\Gamma_{N+1},X)^r \ge \big((1-c)^r-c^r\big) \mathbb P_{_X}\big(B\big(y;cd(y,\Gamma_N)\big)\big)d(y,\Gamma_N)^r.
\end{equation}
Let $\nu$ be an auxiliary  Borel probability measure on $\mathds R^d$ to be specified further on. Set $C(r) =(1-c)^r-c^r $. Integrating the above inequality with respect to $\nu(dy)$ yields,  owing to Fubini's Theorem,
\begin{eqnarray*}
e_r(\Gamma_N,X)^r -e_r(\Gamma_{N+1},X)^r &\ge& C(r) \int\!\!\int \!\!\big(B\big(y;cd(y,\Gamma_N)\big)\big)d(y,\Gamma_N)^r \mathbb P_{_X}(d\xi)\nu(dy)\\
&=& C(r) \int\!\!\int\!\! {\bf 1}_{\{|y-\xi|\le cd(y,\Gamma_N)\}}d(y,\Gamma_N)^r\nu(dy) \mathbb P_{_X}(d\xi)\\
&\ge&C(r)  \int\!\!\int \!\!{\bf 1}_{\{|y-\xi|\le cd(y,\Gamma_N),\, d(y,\Gamma_N) \ge \frac{1}{c+1}d(\xi,\Gamma_N) \}}d(y,\Gamma_N)^r\nu(dy) \mathbb P_{_X}(d\xi).
\end{eqnarray*}
Now using that $\xi\mapsto d(\xi, \Gamma_N)$ is Lipschitz continuous with coefficient $1$, one derives that 
\[
\big\{(\xi,y)\,:\,  |y-\xi|\le \frac{c}{c+1}d(\xi,\Gamma_N)\big\} \subset \Big\{(\xi,y)\,:\, |y-\xi|\le cd(y,\Gamma_N),\, d(y,\Gamma_N) \ge \frac{1}{c+1}d(\xi,\Gamma_N) \Big\}
\]
and, still by Fubini's Theorem, 
\begin{equation}\label{eq:micro-macrofinal}
e_r(\Gamma_N,X)^r -e_r(\Gamma_{N+1},X)^r \ge \frac{C(r)}{(1+c)^r}  \int \nu\Big(B\big(\xi ;cd(y,\Gamma_N)\big)\Big) d(\xi, \Gamma_N)^r \mathbb P_{_X}(d\xi).
\end{equation}
Let $\varepsilon\!\in (0,1/2)$. We set  $\nu = f_{\varepsilon,\delta}.\lambda_d$ where $f_{\varepsilon, \delta}$ is a probability density given by 
\[
f_{\varepsilon, \delta}(\xi)= \frac{\kappa_{\varepsilon, \delta}}{(|x|+1+\varepsilon)^{d+\delta}} \;\mbox{ with }\; \delta>0.
\]
The density $f_{\varepsilon, \delta}$ shares the following property on balls:  let $\xi\!\in \mathds R^d$ and $t\in \mathds R_+$. If $t\le \varepsilon (|\xi|+1)$, then 
\[
\nu\big(B(\xi,t)\big)\ge g_{\varepsilon,\delta}(\xi)t^d\quad \mbox{ with }\quad g_{\varepsilon,\delta}(\xi)=\frac{1}{(1+\varepsilon)^{d+\delta}} \frac{\kappa_{\varepsilon, \delta}}{(|\xi|+1)^{d+\delta}}V_d
\]and $V_d = \lambda_d\big(B(0;1)\big)$. Now let $c=c(\varepsilon)\!\in (0,1)$
such that $\frac{c}{c+1}= \varepsilon(A^{-1}_{_X}\wedge1)$. As   $d(\xi,\Gamma_N)\le |\xi|+A_{_X}$, this in turn implies that $\frac{c}{c+1}d(\xi,\Gamma_N)\le \varepsilon( |\xi|+1)$.   As a consequence  
\[
e_r(\Gamma_N,X)^r -e_r(\Gamma_{N+1},X)^r \ge \frac{C(r)}{(c+1)^r}  \int g_{\varepsilon,\delta}(\xi) d(\xi, \Gamma_N)^{r+d} \mathbb P_{_X}(d\xi).
\]

  Let $s\!\in [r,r+d)$. It follows from Equation~\eqref{eq:micro-macrofinal} and the reverse H\"{o}lder inequality applied with $p=\frac{s}{r+d} \!\in (0,1)$ and $q=-\frac{s}{d+r-s}\!\in (-\infty, 0) $  that 
\[
 \int g_{\varepsilon,\delta}(\xi) d(\xi, \Gamma_N)^{r+d} \mathbb P_{_X}(d\xi)   \ge  \left[ \int_{\mathds R^d}d(\xi,\Gamma_N)^s \mathbb P_{_X}(d\xi) \right]^{\frac{r+d}{s}}\left[\int g_{\varepsilon,\delta,a}(\xi)^{-\frac{s}{d+r-s}} \mathbb P_{_X} (d\xi)\right]^{-\frac{d+r-s}{s}}.
\]

It follows  from the assumption made on $X$ (or $\mathbb P_{_X}$) that, for small enough $\delta>0$, 
$$
\left[ \int_{\mathds R^d} g_{\varepsilon, \delta}^{-\frac{s}{d+r-s}}(\xi) \mathbb P_{_X}(d\xi) \right] ^{-\frac{d+r-s}{s}} = \frac{\kappa_{\varepsilon,\delta V_d}}{(1+\varepsilon)^{d+\delta}} \left[ \mathds E\Big[\big(1+|X|)^{\frac{(d+\delta)s}{d+r-s}}\Big]\right] ^{-\frac{d+r-s}{s}}
<+\infty.
$$
As a consequence
\begin{equation}\label{eq:micro-macrofinalrs}
e_r(\Gamma_N,X)^r -e_r(\Gamma_{N+1},X)^r \ge C_{X,r,s,\varepsilon, \delta} \, 
e_s(\Gamma_N,X)^{r+d}
\end{equation}
 where $ C_{X,r,s,\varepsilon, \delta}=   \frac{(1-c)^r -c^r}{(1+c)^r(1+\varepsilon)^{d+\delta}} \kappa_{\varepsilon, \delta}\big\|1+|X|\big\|^{-(d+\delta)}_{\frac{(d+\delta)s}{d+r-s}}$.

\medskip
\noindent {\sc Step~3} ({\em Upper-bound for the quantization error increments}): Since the distribution of $X$ is absolutely continuous $X$ ($i.e.$ admits  a density), one  derives following the lines of the proof of Theorem~2 in~\cite{GraLusPag1}  this upper-bound for the increments of the $L^r$-quantization error: there exists a  real constant $\kappa_{_{X,r}}>0$ such that 
\[
e_r(\Gamma_N,X)^r -e_r(\Gamma_{N+1},X)^r\le \kappa_{_{X,r}} N^{-1-\frac rd}.
\]
Combining this inequality with~\eqref{eq:micro-macrofinalrs} yields
\[
 \Big[
e_s(\Gamma_N,X)^s\Big]^{\frac{r+d}{s}}\le \tilde C_{X,r,s,\varepsilon, \delta} N^{-\frac{r+d}{d}}
\]
where $\displaystyle  \tilde C_{X,r,s,\varepsilon, \delta} = \frac{\kappa_{_{X,r}} }{C_{X,r,s,\varepsilon, \delta}}$. This completes the proof by considering the $(d+r)^{th}$ root of the inequality.~$\hfill \Box$

\bigskip

 \noindent {\bf Remarks.} $\bullet$ If $\varphi$ is radial, more precisely if  $\varphi= \tilde\varphi(|x|_0)$ where $\tilde \varphi:\mathds R_+\to \mathds R_+$ is bounded and non-increasing on $[R_0,+\infty)$ and $|\,.\,|_0$ denotes any norm on $\mathds R^d$ the above result holds true even if $\delta=0$ (see~\cite{FQbook}). 
 
 \smallskip 
 \noindent $\bullet$ Criterion~\eqref{EqLrLsProblem}  is  close to optimality  for the following reason. It has been established in~\cite{GraLusPag1} (Theorem~1) that if $X\!\in L^{r+}(\Omega,{\cal A}, \mathds P)$, and if $(\Gamma_N)_{N\ge 1}$ is a sequence of $L^r$-asymptotically optimal quantization grids, then 
 \[
 \varliminf_N N^{\frac 1d} e_s(\Gamma_N,X)\ge J^{\frac 1s}_{r,d,|.|}\left[\int_{\mathds R^d}\varphi^{\frac{d}{r+d}}d\lambda_d\right]^{\frac{1}{d}}\left[ \int_{\mathds R^d}\varphi^{\frac{d+r-s}{r+d}}d\lambda_d\right]^{\frac 1s}
 \]
where  $J^{\frac 1s}_{r,d,|.|}$ is given by~\eqref{eq:Zador2}. Since $X\in L^{r+}(\Omega,{\cal A}, \mathds P)$, $\int_{\mathds R^d} \varphi^{\frac{d}{r+d}}d\lambda_d <+\infty$ by  an elementary application of the reverse H\"{o}lder inequality  (see Equation~(2.11) from~\cite{GraLusPag1}). On the other hand, 
 \[
  \int _{\mathds R^d} \varphi^{\frac{d+r-s}{r+d}}d\lambda_d = +\infty \Longrightarrow  X\notin L^{\frac{ds}{d+r-s}}(\Omega,{\cal A}, \mathds P).
 \]

\section{Numerical experiments for the BSDE scheme}\label{sec:Numeric}
To illustrate empirically the improved theoretical rate obtained in the previous section,  we deal here with  two toy examples: a bull-call spread option  (in a market where the risk free returns for the borrower and the lender are different) and a multidimensional example with the Brownian motion.  Note that our aim is  not to make an extensive numerical test with  a complete description (or a complexity analysis)  of several used algorithms for the optimal grid search. These subjects  have extensively been considered in the past and we refer for example  to~\cite{PagPri03} for more details. 


Numerical tests are performed using our  quantized BSDE algorithm.  At each discretization instant $t_k$, we associate a quantization grid $\Gamma_k=\{x^k_i, \, i=1,\ldots,N_k\}$ of size $N_k$,  possibly not optimal {\em a	 priori}, and $\hat X_k = {\rm Proj}_{\Gamma_k}(\bar X_k)$ the resulting Voronoi quantization of $\bar X_{t_k}$.  Then, we set  for every  $k= 0,\ldots,n-1$, $i=1, \ldots, N_k$, $j=1, \ldots,N_{k+1}$, the {\em transition weights} (or probabilities)
\[
p_{ij}^k = \mathds P(\hat X_{k+1} = x_j^{k+1} \vert \,\hat X_{k} = x_i^k),  \ k=0, \ldots, n-1.
\]
and, for $k= 0,\ldots,n$, $i=1, \ldots, N_k$, the {\em marginal weights} $p_i^k  = \mathds P(\hat X_k  = x_i^k)$,  $k=0, \ldots,n $.

Setting $\hat Y_k = \hat y_k(\hat X_k)$, for every $k \in \{ 0, \ldots,n\}$, the quantized BSDE scheme reads as
\begin{equation*}
   \left \{ \begin{array}{l}
   \hat y_{n}(x_i^n)  =     h( x_i^{n}) \hspace{6cm } i=1, \ldots,N_n  \\
   \hat y_{k}(x_i^k)   = \hat {\alpha}_k(x_i^k)   + \Delta_n f \big(t_k, x_i^{k}, \hat {\alpha}_k(x_i^k), \hat {\beta}_k(x_i^k)  \big) \qquad  i=1, \ldots,N_k   
    \end{array}  
 \right.
 \end{equation*}
 where for $\ k=0, \ldots,n-1$,
\begin{equation}
\hat{\alpha}_k(x_i^k) = \sum_{j=1}^{N_{k+1}}  \hat y_{k+1}(x_j^{k+1}) \, p_{ij}^{k} \quad \textrm{ and } \quad \hat {\beta}_k(x_i^k)   =  \frac{1}{\Delta_n} \sum_{j=1}^{N_{k+1}}    \hat y_{k+1}(x^{k+1}_j) \, \pi^{W,k}_{ij}
\end{equation}
with 
\[
\pi^{W,k}_{ij}    =\frac{1}{p_i^k } \times  \mathbb{E}\Big(\Delta W_{t_{k+1}}\mbox{\bf 1}_{\{ \hat X_{k+1}=x^{k+1}_j,\, \hat X_k=x^k_i\}}\Big).
\]
We use a time discretization mesh of length $n=20$ for the first example and of length $n=10$ for all dimensions in the second example. In both examples below, the  quantizers $\hat X_k$, $k=1, \ldots, n$ (with $\hat X_0 =X_0$) are  computed from a scaling of the optimal grid of  ${\cal N}(0,I_d)$ Gaussian distributions available on the website devoted to quantization

\smallskip
\centerline{
\url{www.quantize.maths-fi.com}. 
}

\smallskip
The transition probabilities  are approximated using a Monte Carlo simulation of  size   $ 10^7$ for all examples (keep in mind that we may have the same precision with a smaller size of Monte Carlo trials but our aim is not to optimize these sizes of the trials).   For simplicity reasons, we use a uniform dispatching  across the time layers for the quantizers by assigning  the same grid size $N_k$  to all $\hat X_k$ at  every discretization step $t_k$, $k=1, \ldots, n$. Once the optimal quantization grids are computed offline, the complexity of our procedure depends on the grid sizes and varies from less than $1$ second in lower dimension up to a few minutes in dimension 5, almost entirely devoted to the computation by Monte Carlo simulation of the transition weights. By contrast, the (quantized) dynamic programming descent itself is  instantaneous.

\subsection{Bid-ask spread for interest rate}
Let us consider  a model  with two interest rates introduced  in \cite{Ber}:  a borrowing rate $R$ and a lending rate $r\le R$ where the 
stock price $(X_t)_{t \in [0,T]}$  evolves following the Black-Scholes dynamics
\[
dX_t = \mu X_t dt + \sigma X_t dW_t, \;X_0= x_0>0.
\] 
 Let $\varphi_t$  be the amount of assets held at time $t$.  Then, the dynamics of the  replicating portfolio is given by
\begin{equation}  \label{EqBSDENumNonLin}
Y_t  = Y_T  + \int_t^T f(Y_s,Z_s) ds -  \int_{t}^T Z_s dW_s
\end{equation}
where   $Z_t =  \sigma \varphi_t  X_t$ and the driver function $f$ is given by 
\[
f(y,z)  = -ry- \frac{\mu -r}{\sigma} z - (R-r) \min \big(y - \frac{z}{\sigma},0 \big).
\]
As in \cite{BenSte}, we consider  a  bull-call  spread comprising  a long call with strike $K_1=95$ and two short call with strike $K_2=105$, with payoff function
\[
 (X_T -K_1)^{+} - 2 (X_T-K_2)^{+} =  Y_T.
\]
Furthermore, we consider the set of parameters:
\[
X_0 = 100, \quad R=0.06, \quad r=0.01, \quad \mu=0.05, \quad  \sigma = 0.2, \quad T=0.25.  
\]
The BSDE~\eqref{EqBSDENumNonLin} has no analytical  solution. We refer to the reference prices given in  \cite{BenSte, RuiOos} where $(Y_0,Z_0)$ is approximated by $(2.96,0.55)$. We put $n =20$ and, for every $k=1, \ldots,n$, the grid sizes $N_k = \bar N = \frac{N}{n}$ is constant (keep in mind that $N = N_1+ \ldots + N_n$).  The quantizers $\hat X_{t_k}$ have been obtained by  using dilatations of optimal Gaussian quantization grids that we substitute to $W_{t_k}$ into  the formula $X_{t_k}= x_0e^{(\mu-\frac{\sigma^2}{2})t +\sigma W_{t_k}}$. 

The numerical convergence rate of the error $\bar N\mapsto \vert  Y_0 - \hat Y_0^{\bar N} \vert$,  ${\bar N}= 5 \ell, \ell=1, \ldots, 30$,   is depicted in Figure~\ref{figCompRegMar}, including a  polynomial regression which  emphasizes the empirical order  of the convergence rate, namely  $\bar N^{-1}$. 

\subsection{Multidimensional example}

We consider the following non linear BSDE (example due to J.-F. Chassagneux):
\[
dX_t = dW_t, \qquad -dY_t  =  f(t,Y_t,Z_t) dt - Z_t \cdot dW_t, \quad  Y_{_T}= \frac{e_T}{1 +e_T}
\]
where 
\[
e_t = \exp(t+ W_t^1+ \ldots +W_t^d),    \quad t \in [0,T],
\]
 $f(t,y,z)  = (z_1+ \ldots +z_d) \big(y - \frac{2+d}{2d} \big)$ and  $W$ is a $d$-dimensional Brownian motion.  The solution of this BSDE is 
\begin{equation}
Y_t  = \frac{e_t}{1 +e_t}, \qquad Z_t = \frac{e_t}{(1+e_t)^2}.
\end{equation}
For the numerical experiments, we use the (regular) time discretization mesh with   $n=10$. 
We choose $t=0.5$, $d=2,3,4,5$, so that $Y_0 =0.5$ and $Z_0^{i} = 0.24$, for every $i=1, \ldots, d$.   We  depict in Figures~\ref{figCompRate0} and \ref{figCompRate1}, the rates of convergence of $\vert \hat Y_0^{\bar N} - 0.5 \vert$ towards $0$, for the (constant) layer grid sizes $N_k=\bar N = \frac Nn= 5, \ldots, 150$.  The graphics in Figures~\ref{figCompRate0} and~\ref{figCompRate1} confirm  a rate of convergence of order $N^{-1/d}$. 

 \begin{figure}[htpb]
 \begin{center}
  \!\includegraphics[width=10.5cm,height=8.0cm]{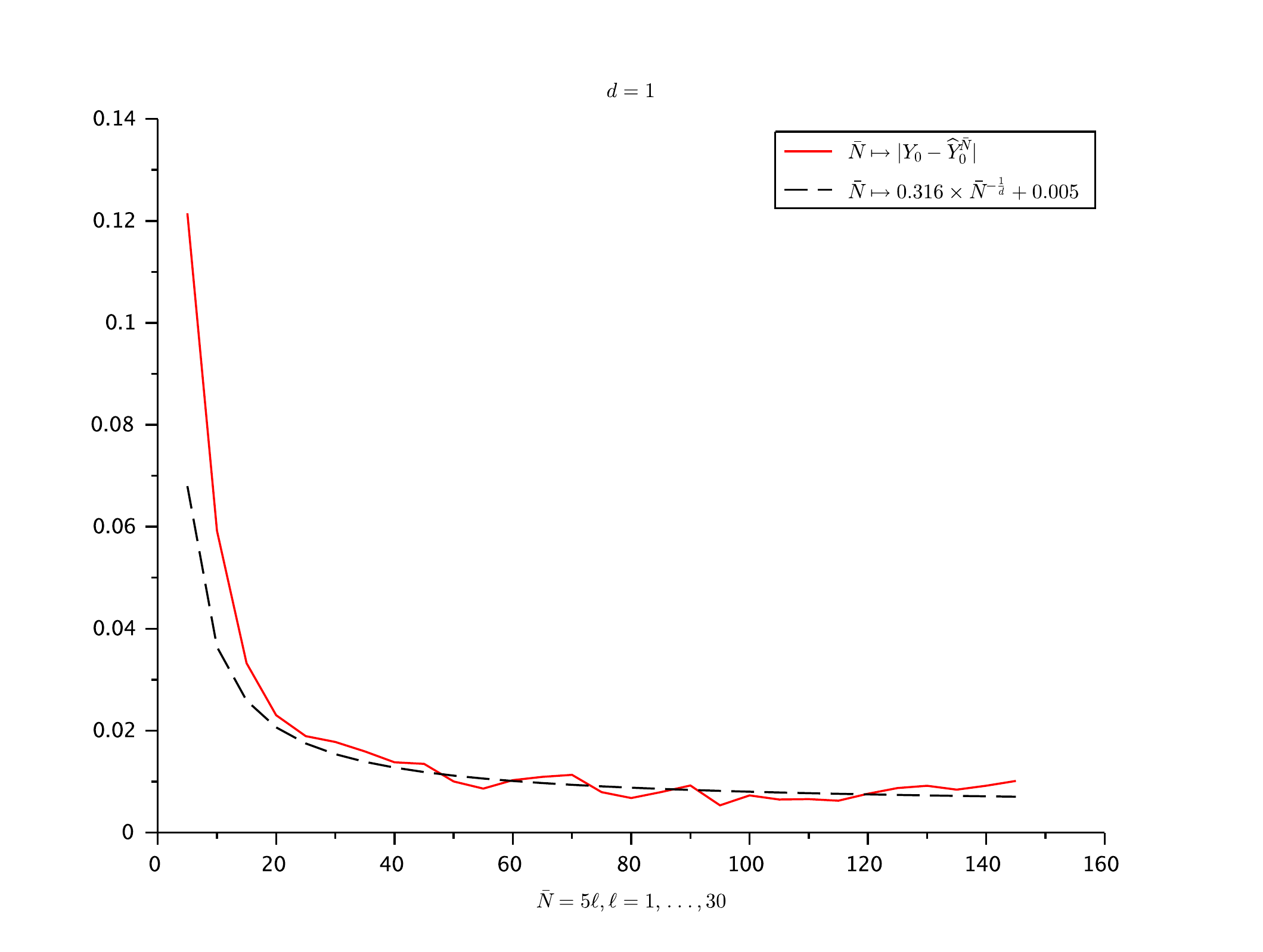}	
  \caption{\footnotesize  Convergence rate of the quantization error for the Bid-ask spread in the Black-Scholes model.  Abscissa axis: the size $\bar N=5 \ell, \ell=1,  \ldots, 30$ of the quantization.  Ordinate axis:  The error $\vert Y_0 - \hat Y_0^{\bar N} \vert$ and the graph ${\bar N} \mapsto \hat a /\bar {N}  + \hat b$, where $\hat a$ and $\hat b$ are the regression coefficients.} 
  \label{figCompRegMar}
  \end{center}
\end{figure}

\begin{figure}[htpb]
  \!\includegraphics[width=8.5cm,height=6.0cm]{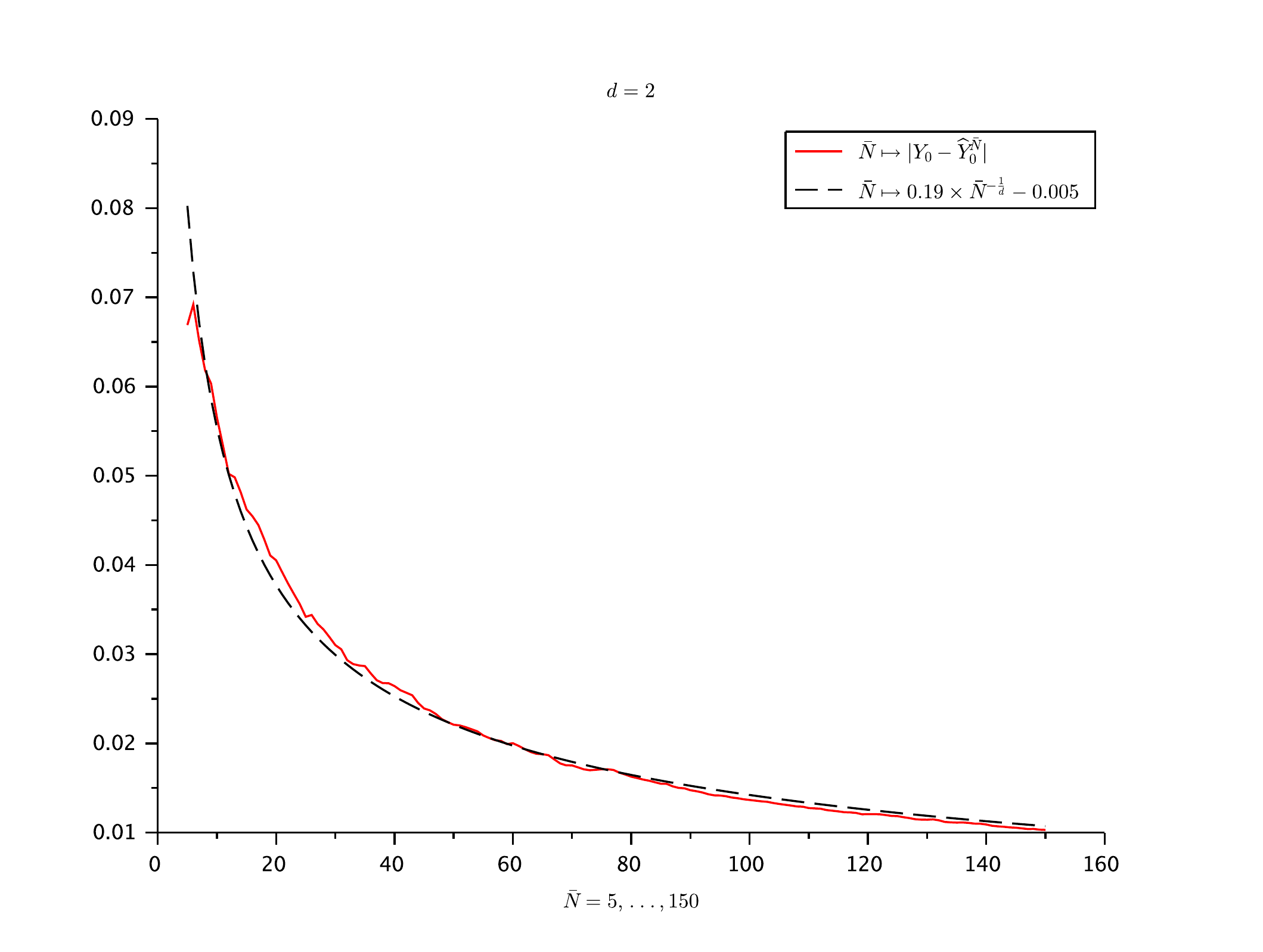}	
\hfill   \!\includegraphics[width=8.5cm,height=6.0cm]{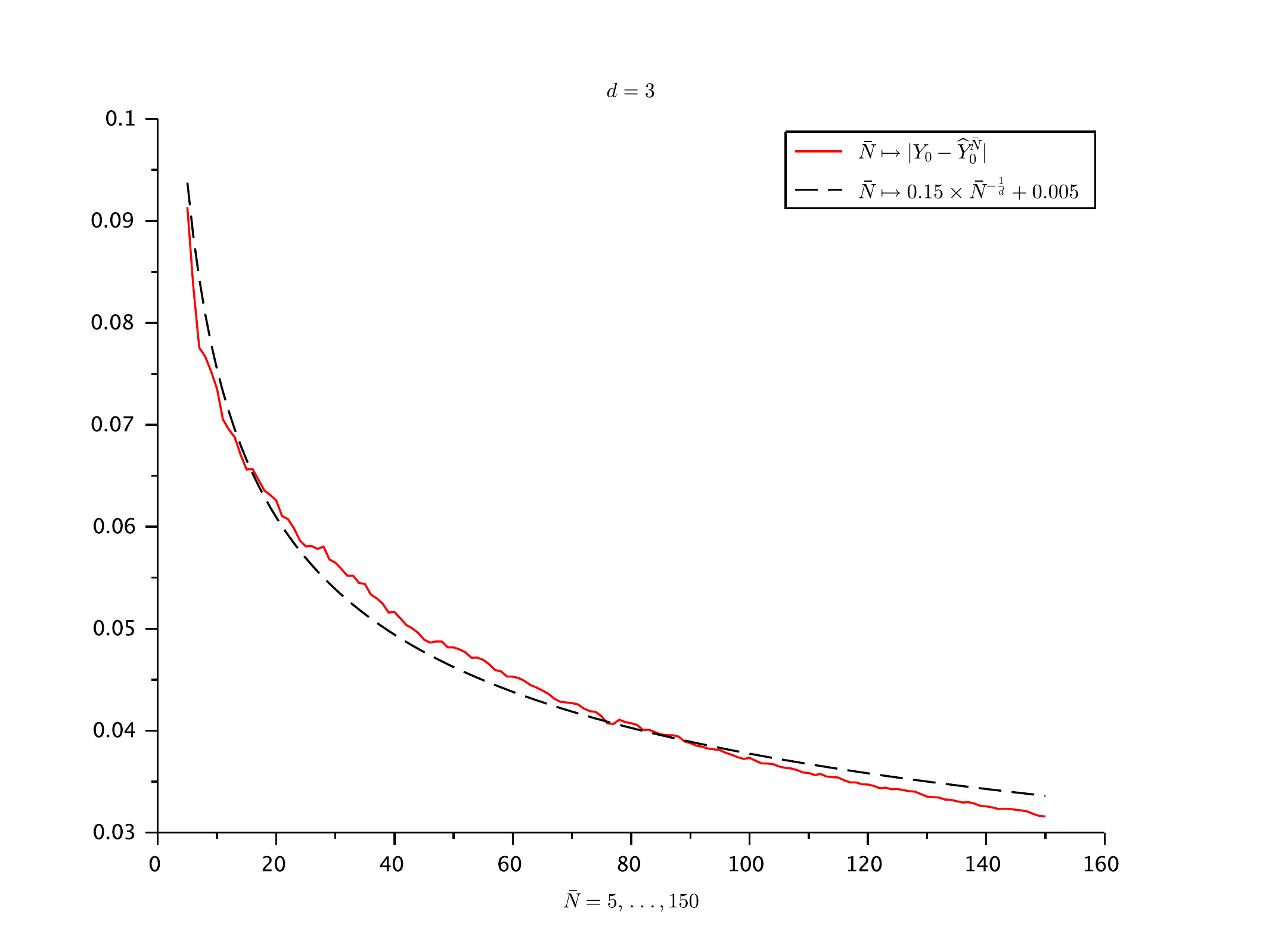}
  \caption{\footnotesize  Convergence rate of the quantization error for the multidimensional example.  Abscissa axis: the size $\bar N=5, \ldots, 150$ of the quantization.  Ordinate axis:  The error $\vert Y_0 - \hat Y_0^{\bar N} \vert$ and the graph $\bar N \mapsto \hat a  \bar{N}^{-1/d}  + \hat b$, where $\hat a$ and $\hat b$ are the regression coefficients. The left hand side graphic corresponds to  the dimension $d=2$ and  the right hand side  to  $d=3$.}
  \label{figCompRate0}
\end{figure}

\begin{figure}[htpb]
  \!\includegraphics[width=8.5cm,height=6.0cm]{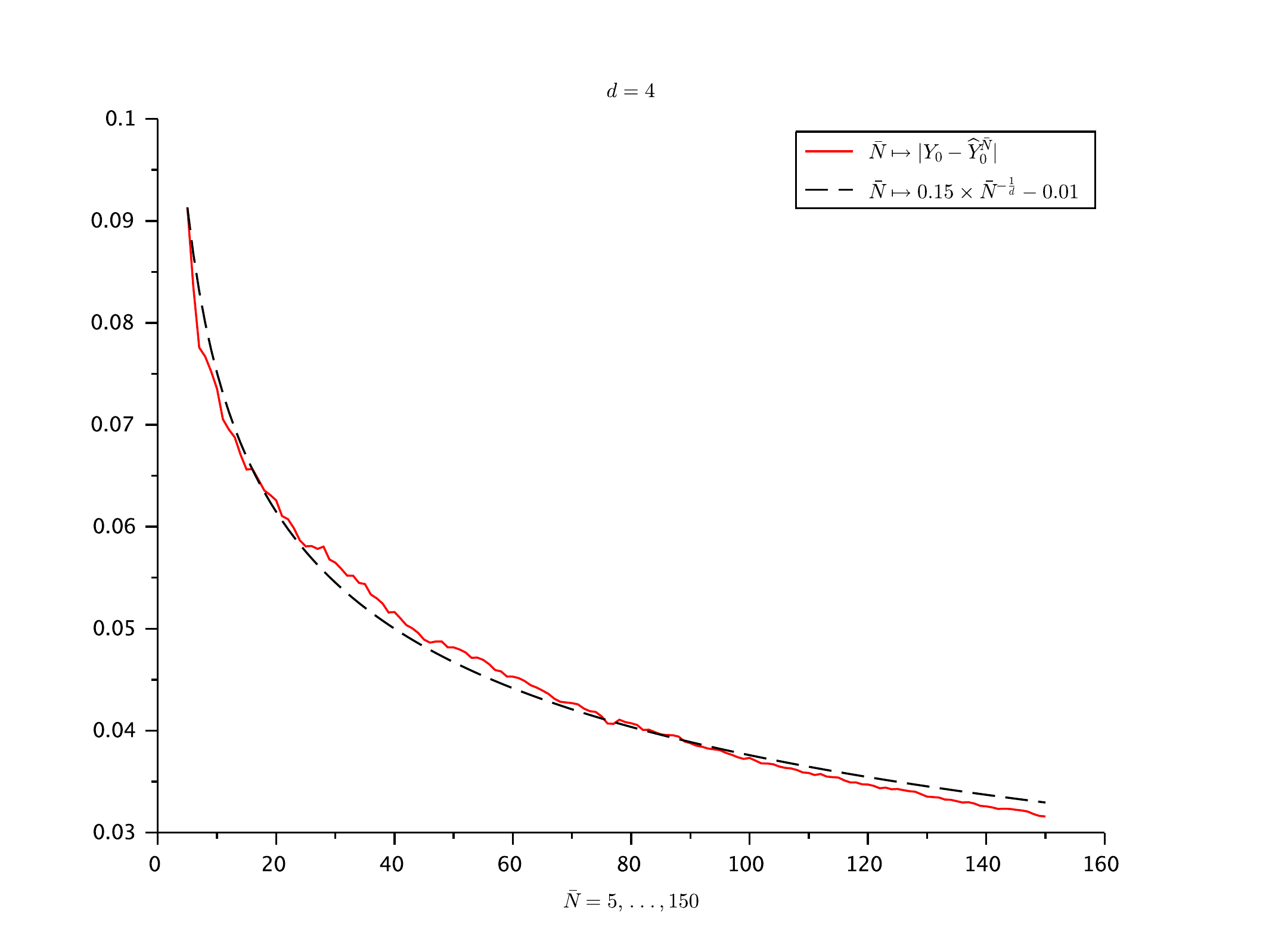}	
\hfill   \!\includegraphics[width=8.5cm,height=6.0cm]{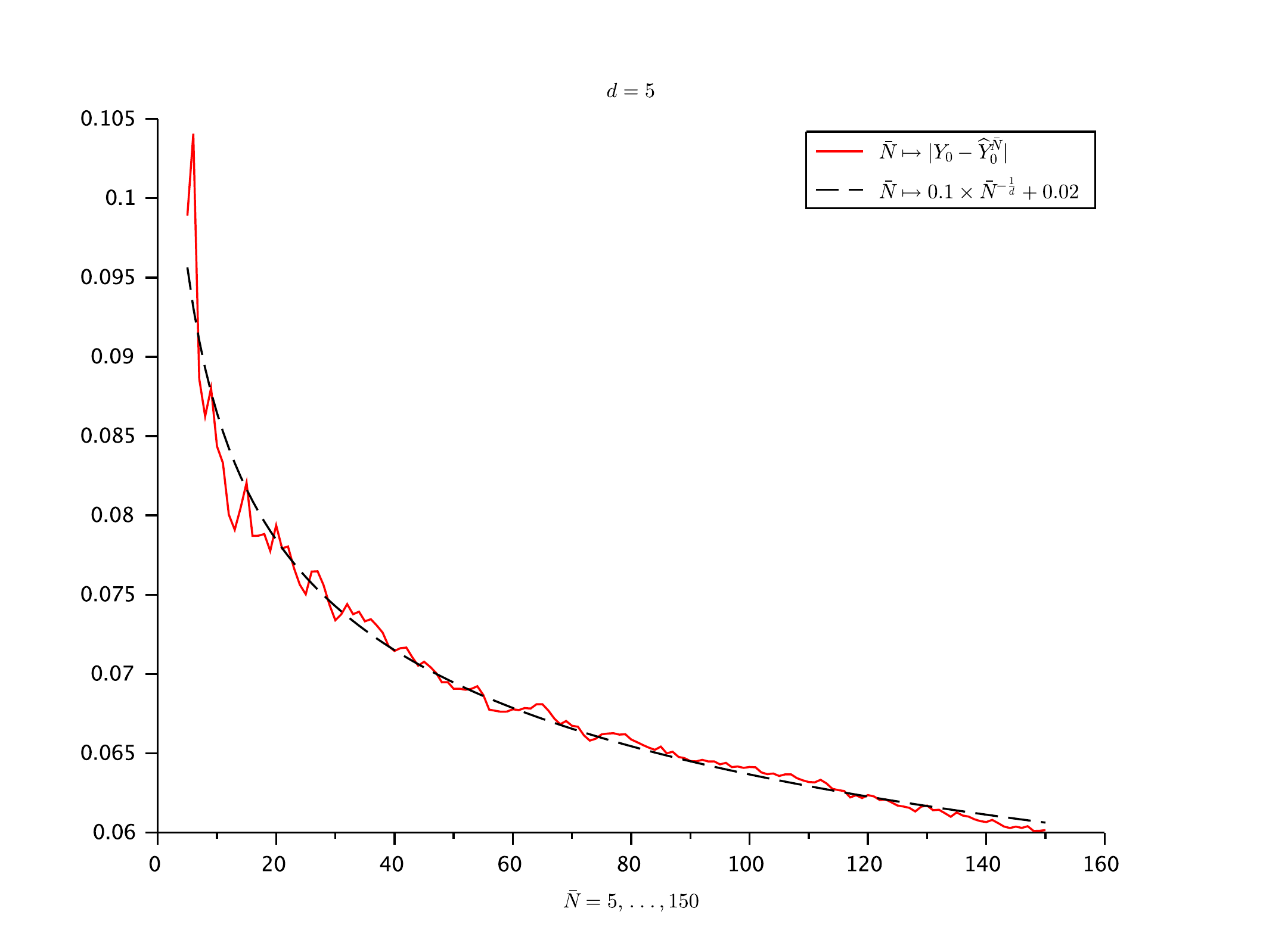}
  \caption{\footnotesize  Convergence rate of the quantization error for the multidimensional example.  Abscissa axis: the size $\bar N=5, \ldots, 150$ of the quantization.  Ordinate axis:  The error $\vert Y_0 - \hat Y_0^{\bar N} \vert$ and the graph ${\bar N} \mapsto \hat a  \bar{N}^{-1/d}  + \hat b$, where $\hat a$ and $\hat b$ are the regression coefficients. The left hand side graphic corresponds to  the dimension $d=4$ and  the right hand side  to  $d=5$.}
  \label{figCompRate1}
\end{figure}


\section{Nonlinear filtering problem}  \label{SecNonLinearFilt}
We consider in this section the  discrete time nonlinear filtering model and the quantization based  numerical scheme presented in the introduction. Our aim is two-fold: improving the  error bounds like for BSDE on the one hand and, on the other hand,  relaxing the Lipschitz continuity on the conditional densities $g_k$ (in favor of a local Lipschitz continuity assumption with polynomial growth). In particular, these new error bounds confirm  the results obtained in the survey~\cite{Sel} devoted to a comparison between quantization and particle based numerical methods for non-linear filtering. 

\subsection{Error analysis}

Let us   first recall the   assumptions   made in~\cite{PagPha} on the conditional transition density  functions $g_k$ and the Markov transitions $P_k$:
\medskip 

 \noindent   $ (\mathscr H_{0}) \  $ For  every $k \in \{1,\ldots,n\}$ there exists  $ [g_k^1]_{\rm Lip},  [g_k^2]_{\rm Lip}: \mathds R^q {\small \times} \mathds R^q  \mapsto \mathds R_{+}$ such that
   \begin{eqnarray*}
    \vert  g_k(x,y,x',y') -  g_k(\hat x,y, \hat x',y')\vert &  \leq &  [g_{k}^1]_{\rm Lip}(y,y')   \vert x - \hat x \vert   +    [g_{k}^2]_{\rm Lip}(y,y')   \vert x' - \hat x' \vert.
 \end{eqnarray*}

\noindent   $ (\mathscr A_{1}) \  $ $(i)$ The Markov transition operators $P_k(x,dx'), k=1, \ldots, n$  propagate Lipschitz continuity  (in the sense of Lemma~\ref{LemProofProTheo}) and  
$$ 
[P]_{\rm Lip}:= \max_{k=1,\ldots,n}  [P_k]_{\rm Lip} <+\infty. 
$$
\hskip 1.cm $(ii)$  For every $k=1, \ldots, n$, the functions $g_k$ are bounded on $\mathds R^d {\small \times} \mathds R^q {\small \times} \mathds R^d {\small \times} \mathds R^q $ and we set 

$$ 
K_g:= \max_{k=1, \ldots,n} \Vert  g_k  \Vert_{\infty}<+\infty.
$$
We will relax these Lipschitz assumptions into controlled Lipschitz assumptions. Let us consider, for a fixed {\em non-negative} function $\theta: \mathds R^{d} \mapsto \mathds R_{+}$ satisfying,

 \noindent $ ({\mathscr I}_{\theta})\ $ \hskip 2.0 cm  $\forall\, k \in \{1,\ldots,n\}$,    $\quad\mathds E(\theta(X_k)) < + \infty$.

 \medskip
 We make the following $\theta$-local Lipschitz continuity  assumption (which is weaker than   $ (\mathscr H_{0}) $) on the growth of the conditional transition density  functions $g_k$:    

 \medskip
  \noindent   $ (\mathscr H_{\rm Liploc}) \ $ There exists   $ [g_k^1]_{\rm Liploc},  [g_k^2]_{\rm Liploc}: \mathds R^q {\small \times} \mathds R^q  \mapsto \mathds R_{+}$ such that, for  every $k \in \{1,\ldots,n\}$, 
   \begin{eqnarray*}
    \vert  g_k(x,y,x',y') -  g_k(\hat x,y, \hat x',y')\vert &  \leq &  [g_{k}^1]_{\rm Liploc}(y,y') (1+\theta( x ) + \theta(\hat x) +  \theta(x' ) + \theta( \hat x' ) ) \vert x - \hat x \vert  \\
 &  &+   [g_{k}^2]_{\rm Liploc}(y,y') (1+\theta( x ) + \theta(\hat x) +  \theta(x' ) + \theta( \hat x' ) )  \vert x' - \hat x' \vert.
 \end{eqnarray*}

\smallskip 
A standard situation  is the sometimes called ${\rm Li}(1,\alpha)$ framework when the $g_k$ satisfy $ (\mathscr H_{\rm Liploc})$  with  the   function $\theta: x \mapsto \theta(x) = \vert  x \vert^{\alpha}$ for an $\alpha\ge 0$, namely
 
 \noindent   $ (\mathscr H_{\alpha})  \ $ For  every $k \in \{1,\ldots,n\}$ there exists  $ [g_k^1]_{\rm pol},  [g_k^2]_{\rm pol}: \mathds R^q {\small \times} \mathds R^q  \mapsto \mathds R_{+}$ such that
   \begin{eqnarray*}
    \vert  g_k(x,y,x',y') -  g_k(\hat x,y, \hat x',y')\vert &  \leq &  [g_{k}^1]_{\rm pol}(y,y') (1+\vert x \vert^{\alpha} + \vert  \hat x \vert^{\alpha} + \vert x' \vert^{\alpha} + \vert \hat x' \vert^{\alpha}) \vert x - \hat x \vert  \\
 &  & +  [g_{k}^2]_{\rm pol}(y,y') (1+\vert x \vert^{\alpha} + \vert  \hat x \vert^{\alpha} + \vert x' \vert^{\alpha} + \vert \hat x' \vert^{\alpha}) \vert x' - \hat x' \vert.
 \end{eqnarray*}

When $\theta\equiv 0$, this framework coincides with the Lipschitz one.  To simplify some statement we will introduce
\begin{equation}\label{eq:gLiplocGlob}
[g^i]_{\rm Liploc}(y,y') = \max _{k=1,\ldots,n} [g_{k}^i]_{\rm Liploc}(y,y'),\quad i=1,2.
\end{equation}

\noindent {\bf Example.} We may consider a model in which  the  signal process $X$ is a discrete time real valued (Markov) process and that the observation process evolves as
\[
Y_k =  Y_{k-1}+\varphi( X_{k-1})  + \sigma\varepsilon_{k}, \;\sigma>0,\quad \varepsilon_{k} \stackrel{i.i.d}{ \sim } {\cal N}(0,1), \quad k=1, \ldots, n.
\]
where the function $\varphi$ is  bounded, locally Lipschitz continuous but possibly not Lipschitz. In this case, the conditional density functions $g_k$ do not depend on $k$ and read  (where $\Phi_0$  denotes the c.d.f.  of the ${\cal N}(0;1)$) 
\[
g_k(x,y,x',y')   = \Phi_0 \Big(\frac{ y -y'- \varphi(x)}{\sigma}  \Big).
\]
Thus, if $\varphi(x)= \sin (x^3)$, then $g_k$  will be of ${\rm Li}(1,2)$-type. This situation extend to any locally Lipschitz bounded  function $\varphi $ whose derivative has a ``$\theta$-growth" at infinity.

Then we ask the transitions $P_k(x,dy)$   to propagate this $\theta$-local Lipschitz property as a counterpart of $({\mathscr A}_1)$.  Let $f:\mathds R^d \rightarrow \mathds R$ be $\theta$-locally Lipschitz with a local Lipschitz coefficient $[f]_{\rm Liploc}$ defined by
 \begin{equation}
[f]_{\rm Liploc}=\sup_{x\neq x'}\frac{\vert f(x) - f(x') \vert}{ \big(1 + \theta(x) + \theta(x') \big) \, \vert x - x' \vert}<+\infty.
\end{equation}

\[
\hskip -1.5cm ({\mathscr A}_{1,\rm  loc}) \quad  [P]_{\rm Liploc} = \max_{k=1,\ldots,n}[P_k]_{\rm Liploc} <+\infty \;\mbox{ where }\; [P_k]_{\rm Liploc}=Ê\sup_{[f]_{\rm Liploc}\le 1} [P_kf]_{\rm Liploc}.
\]  

\begin{rem}
One easily checks  that the transition kernels of the Euler scheme with  step $\frac Tn$ (and Brownian increments) of a diffusion with Lipschitz continuous  drift $b$ and diffusion coefficient  $\sigma$ have the $\theta$-local Lipschitz property when $\theta_{\alpha}(x) = |x|^{\alpha}$, $\alpha>0$.
\end{rem}

The following classical lemma is borrowed (and straightforwardly  adapted) from~\cite{PagPha} (Lemma 3.1). 
\begin{lem}  \label{LemNLFilter} Let  $\mu_y$ and $\vartheta_y$ be two families of finite and positive measures on a measurable space $(E,\mathcal E)$. Suppose that there exist two symmetric functions $R$ and $S$ defined on the set of positive finite measures  such that, for every bounded $\theta$-Lipschitz function $f$,
\begin{equation}
\left \vert  \int   f d\mu_y  - \int f d\vartheta_y    \right\vert ^2  \leq \Vert f \Vert^2_{\infty}  R(\mu_y,\vartheta_y)  + [f]^2_{\rm Liploc} S(\mu_y,\vartheta_y). 
\end{equation}
Then,
 \begin{equation}
\left \vert  \frac{\int   f d\mu_y}{\mu_y(E)}  - \frac{ \int f d\vartheta_y  }{\vartheta_y(E)}  \right \vert   \leq  4 \frac{\Vert f \Vert^2_{\infty}  R(\mu_y,\vartheta_y)  +  \frac 12 [f]_{\rm Liploc}^2 S(\mu_y,\vartheta_y)}{\mu_y(E)\vee \vartheta_y(E)}  . 
\end{equation}
\end{lem}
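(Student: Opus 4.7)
The plan is to reduce the claim to two applications of the hypothesis: one with the test function $f$ itself and one with the constant function $\mathbf 1$, which is admissible since it is bounded with $\Vert \mathbf 1 \Vert_{\infty}=1$ and $[\mathbf 1]_{\rm Liploc}=0$. The starting point is the algebraic identity
\[
\frac{\int f\,d\mu_y}{\mu_y(E)}-\frac{\int f\,d\vartheta_y}{\vartheta_y(E)}=\frac{1}{\mu_y(E)}\Big(\int f\,d\mu_y-\int f\,d\vartheta_y\Big)+\frac{\int f\,d\vartheta_y}{\vartheta_y(E)}\cdot\frac{\vartheta_y(E)-\mu_y(E)}{\mu_y(E)}.
\]
Since $R$ and $S$ are symmetric in $(\mu_y,\vartheta_y)$, I would assume without loss of generality that $\mu_y(E)=\mu_y(E)\vee\vartheta_y(E)$, so that the denominator $\mu_y(E)$ appearing above is automatically the larger of the two masses.

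Next I would control each of the two pieces of the decomposition. The first piece is handled directly by the assumption, which yields $\big|\int f\,d\mu_y-\int f\,d\vartheta_y\big|^2\le \Vert f\Vert_{\infty}^2 R(\mu_y,\vartheta_y)+[f]^2_{\rm Liploc}S(\mu_y,\vartheta_y)$. For the second piece, plugging $f=\mathbf 1$ into the hypothesis gives the crucial estimate $|\mu_y(E)-\vartheta_y(E)|^2\le R(\mu_y,\vartheta_y)$, while the obvious bound $\big|\int f\,d\vartheta_y\big|/\vartheta_y(E)\le \Vert f\Vert_{\infty}$ takes care of the weight, so the second piece contributes at most $\Vert f\Vert_{\infty}^2 R(\mu_y,\vartheta_y)/(\mu_y(E)\vee\vartheta_y(E))^2$ in squared form.

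To conclude, I would combine the two bounds by the triangle inequality and the elementary estimate $(a+b)^2\le 2(a^2+b^2)$, obtaining
\[
\left|\frac{\int f\,d\mu_y}{\mu_y(E)}-\frac{\int f\,d\vartheta_y}{\vartheta_y(E)}\right|^2 \le \frac{4\Vert f\Vert_{\infty}^2 R(\mu_y,\vartheta_y)+2[f]^2_{\rm Liploc}S(\mu_y,\vartheta_y)}{(\mu_y(E)\vee\vartheta_y(E))^2},
\]
which matches the announced bound up to the square (the excerpt's right-hand side is dimensionally consistent only if the left-hand side is squared, which I take to be the intended statement). The real work is almost entirely bookkeeping; the only genuine point to check is the admissibility of $\mathbf 1$ as a test function in the hypothesis, which is trivial, and the symmetric use of $R$ and $S$ that allows the $\vee$ to appear in the denominator. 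No deep estimate is needed beyond the basic decomposition and $(a+b)^2\le 2(a^2+b^2)$.
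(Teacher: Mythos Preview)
Your argument is correct and is the standard one for this kind of ``normalized vs.\ un-normalized'' comparison. The paper does not actually give a proof of this lemma: it simply cites Lemma~3.1 of~\cite{PagPha} as the source, so there is no in-paper proof to compare against. Your decomposition, the WLOG reduction via the symmetry of $R$ and $S$, the application of the hypothesis to $f$ and to $\mathbf 1$, and the final $(a+b)^2\le 2(a^2+b^2)$ step are exactly the natural route.

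Your observation about the missing squares is also on target: the stated inequality is dimensionally off, and the intended conclusion is
\[
\left \vert  \frac{\int   f\, d\mu_y}{\mu_y(E)}  - \frac{ \int f \,d\vartheta_y  }{\vartheta_y(E)}  \right \vert^{2}   \leq  4\, \frac{\Vert f \Vert^2_{\infty}  R(\mu_y,\vartheta_y)  +  \tfrac 12 [f]_{\rm Liploc}^2 S(\mu_y,\vartheta_y)}{\big(\mu_y(E)\vee \vartheta_y(E)\big)^{2}},
\]
which is precisely what you derived. This is confirmed by the way the lemma is invoked at the end of the proof of Theorem~\ref{ThmLocalLips}, where the resulting bound~\eqref{EqUpperbound22PlusNu} has $|\Pi_{y,n}f-\hat\Pi_{y,n}f|^2$ on the left and $\phi_n^2(y)\vee\hat\phi_n^2(y)=(\mu_y(E)\vee\vartheta_y(E))^2$ in the denominator.
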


In Theorem~\ref{ThmLocalLips} below we will consider  Assumption  $(\mathscr H_{\rm Liploc})$ in place of Assumption $(\mathscr H_{0})$ (considered in~\cite{PagPha})  to derive an error bound.   This less stringent assumption is compensated  by taking advantage of the   {\em distortion mismatch}  property established in Theorem~\ref{thm:Mismatchnew}.  More precisely,  we  will  need that the  $L^{s}$-mean quantization error, for an $s\!\in (2,2+d)$,  associated to any sequence of optimal quadratic quantizers at level $N$   still goes to zero at the optimal rate $N^{-\frac 1d}$. 

%
%

The following  lemma  provides  a control of the $\theta$-local Lipschitz coefficients of the functions $u_{y,k}(f)$ defined  recursively  by~\eqref{EqBackwardInduction}. 

Note that we drop the subscript  related to the observations $y$ in the conditional densities $g_k$ as well as the function $f$ in $u_{y,k}(f)$ to alleviate notations in what follows.
\begin{prop}  \label{LemAssMainTheorem}
$(a)$ Assume that  $({\mathscr I}_{\theta})$, $ (\mathscr H_{\rm Liploc})$ and $({\mathscr A}_{1,\rm  loc})$ hold and that, for every $k =1, \ldots, n$, 
\begin{equation}  \label{EqAssumConvexgrowth}
\mathds E \big(\theta(X_{k}) \vert X_{k-1}=x \big) \le C_{\theta,X} (1 + \theta(x)).
\end{equation}
Let $f : \mathds R^d\to \mathds R^d$ be $\theta$-locally Lipschitz function.  Then, the functions $u_k$ defined by~\eqref{EqBackwardInduction} satisfy  
\begin{eqnarray} 
\label{eq:Lesaell} [u_k]_{\rm Liploc} &\le& K^{n-k}_g \left[\kappa_{g,X}\frac{[P]_{\rm Liploc}^{n-k}-1}{[P]_{\rm Liploc}-1}\|f\|_{\infty}+[P]_{\rm Liploc}^{n-k}[f]_{\rm Liploc}\right]  \,\mbox{(Convention: $\frac{1^n-1}{1-1}=n$)}
\end{eqnarray}
where $\kappa_{g,X}=  2 C_{\theta,X} [g^1]_{\rm Liploc}  + [P]_{\rm Liploc}  [g^2]_{\rm Liploc}$ and $\|u\|_{\infty}\le (K_g)^n\|f\|_{\infty}$. 

\noindent $(b)$   Let  $(X_k)_{k=0,\ldots,n}$ be  the Markov chain defined  as an iterated random map of the form
\begin{equation}\label{eq:randItermap}
X_{k} = F_{k}(X_{k-1},\varepsilon_{k}), \qquad k=1, \ldots,n
\end{equation}
where $(\varepsilon_k)_{k=1,\ldots,n}$ is an i.i.d sequence of  random variables independent of  $X_0$.

 $(i)$ If there exists some $p\!\in (0,+\infty)$ such that 
\begin{equation} \label{EqAssumSquareConvexgrowth}
\theta(X_0)\!\in L^p\quad\mbox{ and }\quad \big\|\theta(F_{k}(x, \varepsilon_{1}) )\big\|_p \le C_{\theta,X}' (1 + \theta(x))
\end{equation}
then $\max_{k=0,\ldots,n}\|\theta(X_k)\|_p<+\infty$. \\
In particular, for $p=1$, the chain satisfies the integrability assumption $({\mathscr I}_{\theta})$ and~\eqref{EqAssumConvexgrowth}.

 $(ii)$ If $F_k(0, \varepsilon_1), \theta(X_0) \!\in L^2$ and,  for every $k\!\in \{1,\ldots,n\}$, $x,\,x'\! \in \mathds R^d$,
\begin{equation} \label{EqFkLip2}
\big\|\theta(F_{k}(x, \varepsilon_{1}) )\big\|_2 \le C_{\theta,X}' (1 + \theta(x)) \quad\mbox{and}\quad 
 \| F_{k}(x,\varepsilon_{1})-F_{k}(x', \varepsilon_{1})\|_2 \le  [F_{k}]_{2,\rm Lip} |x-x'|,
   \end{equation} 
then both $({\mathscr I}_{\theta})$ and  $({\mathscr H}_{\rm Liploc})$ are satisfied. To be more precise
\begin{equation*}
\forall\, k=1, \ldots,n,\quad [P_{k}]_{\rm Liploc}  \le  \max(1,C_{\theta,X}')[F_{k}]_{2,\rm Lip}.
\end{equation*}
\end{prop}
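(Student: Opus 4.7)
For part~(a), I will proceed by backward induction on $k$, starting from $u_n=f$. The $L^\infty$-bound $\|u_k\|_\infty\le K_g^{n-k}\|f\|_\infty$ is immediate from $\|g_k\|_\infty\le K_g$ and the contractivity of $H_{y,k}$, so the real work is to establish the one-step local Lipschitz recursion
\[
[u_{k-1}]_{\rm Liploc}\;\le\;K_g\,[P]_{\rm Liploc}\,[u_k]_{\rm Liploc}\;+\;K_g\,\kappa_{g,X}\,\|u_k\|_{\infty},
\]
after which the closed form \eqref{eq:Lesaell} drops out from a standard unrolling (geometric series in $[P]_{\rm Liploc}$ weighted by the source term $K_g^{n-k}\|f\|_\infty$).

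To obtain the one-step bound I will use the decomposition
\[
u_{k-1}(x)-u_{k-1}(x')\;=\;\int u_k(z)\bigl[g_k(x,\cdot,z,\cdot)-g_k(x',\cdot,z,\cdot)\bigr]P_k(x',dz)\;+\;\bigl[P_k(u_kG^{x}_k)(x)-P_k(u_kG^{x}_k)(x')\bigr],
\]
where $G^{x}_k(z):=g_k(x,\cdot,z,\cdot)$. The first summand is controlled by applying $(\mathscr H_{\rm Liploc})$ to the first argument of $g_k$ pointwise in $z$, then integrating against $P_k(x',dz)$ and using $\mathds E(\theta(X_k)\,|\,X_{k-1}=x')\le C_{\theta,X}(1+\theta(x'))$; majorizing $1+\theta(x)+\theta(x')+2C_{\theta,X}(1+\theta(x'))$ by a constant multiple of $1+\theta(x)+\theta(x')$ produces the $K_g\cdot 2C_{\theta,X}[g^1]_{\rm Liploc}\|u_k\|_\infty$ piece of $K_g\kappa_{g,X}\|u_k\|_\infty$. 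For the second summand, I apply $({\mathscr A}_{1,\rm loc})$ to $h:=u_kG^{x}_k$, whose local Lipschitz coefficient in $z$ (by the standard product rule and the Lipschitz continuity of $g_k$ in its third argument) is at most $K_g[u_k]_{\rm Liploc}+\|u_k\|_\infty[g^2]_{\rm Liploc}$; this delivers the $K_g[P]_{\rm Liploc}[u_k]_{\rm Liploc}$ and $[P]_{\rm Liploc}[g^2]_{\rm Liploc}\|u_k\|_\infty$ contributions.

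The main technical obstacle is the residual $\theta(x)$-factor that naturally appears in $[h]_{\rm Liploc}$ because of the symmetric shape of $(\mathscr H_{\rm Liploc})$ in its third argument: composing with $[P_k]_{\rm Liploc}$ produces an unwanted $(1+2\theta(x))$-coefficient that does not fit inside the target weight $1+\theta(x)+\theta(x')$. I plan to absorb this by re-routing the offending $\theta(x)$-piece back into the first summand, where the free $P_k(x',dz)$-integration combined with $\mathds E(\theta(X_k)|X_{k-1}=x')\le C_{\theta,X}(1+\theta(x'))$ will accommodate it; a careful bookkeeping of this trade-off is what produces precisely $\kappa_{g,X}=2C_{\theta,X}[g^1]_{\rm Liploc}+[P]_{\rm Liploc}[g^2]_{\rm Liploc}$ rather than a larger constant.

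For (b)(i), conditioning on $X_{k-1}$ and exploiting the independence of $\varepsilon_k$ together with Minkowski's inequality yields the scalar recursion $\|\theta(X_k)\|_p\le C'_{\theta,X}(1+\|\theta(X_{k-1})\|_p)$, which iterates trivially to a finite bound for every $k\le n$; specializing to $p=1$ gives both $(\mathscr I_\theta)$ and the conditional growth \eqref{EqAssumConvexgrowth} required in~(a). For (b)(ii), starting from $P_kh(x)-P_kh(x')=\mathds E\bigl[h(F_k(x,\varepsilon_1))-h(F_k(x',\varepsilon_1))\bigr]$, I bound the integrand via the $\theta$-local Lipschitz property of $h$ and then apply Cauchy--Schwarz together with the $L^2$-Lipschitz and $L^2$-growth estimates in \eqref{EqFkLip2}; a final application of Minkowski on the $L^2$-norm of $1+\theta(F_k(x,\varepsilon_1))+\theta(F_k(x',\varepsilon_1))$ extracts the $(1+\theta(x)+\theta(x'))$-weight up to the constant $\max(1,C'_{\theta,X})$, from which the stated operator bound for $[P_k]_{\rm Liploc}$ follows.
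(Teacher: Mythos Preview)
Your approach is essentially the paper's: backward induction on $k$ with the $L^\infty$ bound $\|u_k\|_\infty\le K_g^{n-k}\|f\|_\infty$, a two-term splitting of $u_{k-1}(x)-u_{k-1}(x')$ into a ``vary the first argument of $g_k$'' piece and a ``apply $({\mathscr A}_{1,\rm loc})$ to the product $u_k\cdot g_k(\text{frozen},\cdot)$'' piece, then unrolling the resulting one-step recursion $[u_{k-1}]_{\rm Liploc}\le K_g[P]_{\rm Liploc}[u_k]_{\rm Liploc}+\kappa_{g,X}\|u_k\|_\infty$ as a discrete Gronwall/geometric series. The only cosmetic difference is that the paper freezes $g$ at $x'$ and integrates the $g$-variation piece against $P_{k+1}(x,dz)$, the mirror image of your choice. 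For (b), your Minkowski argument for (i) and your Cauchy--Schwarz argument for (ii) are exactly what the paper does.

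One remark on the ``technical obstacle'' you flag. The paper does not perform any re-routing: it simply writes $[u_{k+1}g_{k+1}(x',\cdot)]_{\rm Liploc}\le K_g[u_{k+1}]_{\rm Liploc}+\|u_{k+1}\|_\infty[g_{k+1}^2]_{\rm Liploc}$, i.e.\ it drops the parameter-dependent $2\theta(x')$ term that a fully literal reading of $(\mathscr H_{\rm Liploc})$ would produce when only the third slot varies. This is consistent with reading the second summand of $(\mathscr H_{\rm Liploc})$ in the separated form $[g_k^2]_{\rm Liploc}(1+\theta(x')+\theta(\hat x'))|x'-\hat x'|$, which is the natural localisation of $(\mathscr H_0)$ and is what the paper's own example satisfies. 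Under that reading there is no obstacle and no bookkeeping trick is required; conversely, your re-routing plan as stated would not actually absorb a genuine $\theta(x)^2$-type term (once the $[P]_{\rm Liploc}$ weight $(1+\theta(x)+\theta(x'))$ is applied, the product is quadratic in $\theta$ and cannot be bounded by a multiple of $1+\theta(x)+\theta(x')$). So your proof is aligned with the paper's, and you should simply invoke the same product-rule bound rather than attempt the re-routing.
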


\begin{rem} Once again the Euler scheme with step $\frac Tn$ satisfies Assumption~\eqref{EqAssumConvexgrowth} with functions $\theta_{\alpha} (x) = \vert x \vert^{\alpha}$, $\alpha>0$.
\end{rem}

\begin{proof} $(a)$ By the Markov property, we have for every $k=0, \ldots,n-1$ and every $x\!\in \mathds R^d$,  
\begin{equation}\label{eq:ukBDDP}
 u_k(x) = \mathds{E}\big(u_{k+1}(X_{k+1})g_{k+1}(X_k,X_{k+1})\big |X_k=x\big)  = (P_{k+1} u_{k+1} g_{k+1}(x, \cdot)) (x).
\end{equation}
It follows that, for every $k=0, \ldots,n-1$, $\Vert  u_{k} \Vert_{ \infty} \le K_g \Vert  u_{k+1} \Vert_{\infty}$, so that,
 \[
 \Vert u_k \Vert_{\infty}  \le  K_g^{n-k}\, \Vert  f \Vert_{ \infty}
 \]
 since $\Vert u_n \Vert_{\infty} = \Vert f \Vert_{\infty}$. Let $k\!\in \{0, \ldots,n-1\}$;    for every  $x,x' \in \mathds R^d$,
\begin{eqnarray*}
\vert u_k(x) - u_k(x') \vert   &\le&   [g_{k+1}^1]_{\rm Liploc}  \Vert u_{k+1} \Vert_{\infty} \big(1+ \theta (x) + \theta(x')   + \mathds E(\theta(X_{k+1})\vert X_k=x)\big) \vert x-x' \vert \\
&   & + \, [P_{k+1}]_{\rm Liploc}[u_{k+1}g_{k+1}(x',\cdot)]_{\rm Liploc} \big(1+\theta(x) + \theta(x') \big) \vert x -x'\vert.
\end{eqnarray*} 
Now, still for every $k=0, \ldots,n-1$,
\begin{eqnarray*}
\vert u_{k+1}(z) g_{k+1}(x',z) - u_{k+1}(z') g_{k+1}(x',z') \vert   &\le&  \vert u_{k+1}(z) -u_{k+1}(z') \vert  g_{k+1}(x',z) \\
& & + \, \vert g_{k+1}(x',z) -g_{k+1}(x',z')\vert \, \vert u_{k+1}(z') \vert \\
& \le  &  K_g[u_{k+1}]_{\rm Liploc} (1 +\theta(z) +\theta(z')) \vert z-z' \vert  \\
& &   +\, \Vert u_{k+1} \Vert_{\infty}[g_{k+1}^2]_{\rm Liploc} (1 +\theta(z) +\theta(z')) \vert z-z' \vert
\end{eqnarray*} 
so that
\[
[u_{k+1}g_{k+1}(x',\cdot)]_{\rm Liploc}  \le K_g[u_{k+1}]_{\rm Liploc} + \Vert u_{k+1} \Vert_{\infty}[g_{k+1}^2]_{\rm Liploc}.
\]
Finally, collecting these inequalities, we deduce from Assumption~\eqref{EqAssumConvexgrowth} that, for every $k=0, \ldots,n-1$,
\begin{eqnarray*}
[u_k]_{\rm Liploc} & \le & \big( 2 C_{\theta,X} [g_{k+1}^1]_{\rm Liploc} + [P_{k+1}]_{\rm Liploc}  [g_{k+1}^2]_{\rm Liploc} \big) \Vert u_{k+1} \Vert_{\infty} + K_g  [P_{k+1}]_{\rm Liploc} [u_{k+1}]_{\rm Liploc}\\
&\le& \kappa_{g,X}\Vert u_{k+1} \Vert_{\infty} + K_g  [P]_{\rm Liploc} [u_{k+1}]_{\rm Liploc}.
\end{eqnarray*}
The conclusion follows by a backward   induction (discrete time Gronwall's Lemma) having in mind that $u_n = f$.

\smallskip
\noindent $(b)$ Claim $(i)$ is obvious. As for claim~$(ii)$, let  $f$ be a  $\theta$-locally Lipschitz with  constant $[f]_{\rm Liploc}$. Then, for every $x, x' \!\in \mathds R^d$ and every $k=1, \ldots,n$,
\begin{eqnarray*}
\vert P_{k} f(x) & - &P_{k} f(x') \vert =  \vert  \mathds E f(F_{k}(x, \varepsilon_{1})) - \mathds E f(F_{k}(x', \varepsilon_{1})) \vert \\
&\le& [f]_{\rm Liploc} \mathds E\Big( \big|  F_{k}(x, \varepsilon_{1}) - \mathds E F_{k}(x', \varepsilon_{1}) \big|\big(1+\theta(F_{k}(x, \varepsilon_{1}) )+\theta(F_{k}(x', \varepsilon_{1}) )\big)\Big)\\
&\le &[f]_{\rm Liploc} \big\| F_{k}(x, \varepsilon_{1}) - F_{k}(x', \varepsilon_{1}) \big\|_2\big(1+ \|\theta(F_{k}(x, \varepsilon_{1}) )\|_2+\|\theta(F_{k}(x, \varepsilon_{1}) )\|_2\big)\\
&\le&  [f]_{\rm Liploc} [F_{k}]_{2,\rm Lip} \vert x-x' \vert \big(1+ C'_{\theta,X} \theta(x) + C'_{\theta,X} \theta(x') \big)
\end{eqnarray*}
where we used Schwarz's Inequality in the third line, and~\eqref{EqFkLip2} and~\eqref{EqAssumSquareConvexgrowth} in the last line. We  deduce that  $[P_{k}]_{\rm Liploc} \le  (1\vee C_{\theta,X}')$, for every $k=1, \ldots,n$.
\end{proof}

Notice that assumptions~\eqref{EqAssumConvexgrowth} and~\eqref{EqAssumSquareConvexgrowth} hold when $\theta$ is a polynomial convex function  and when  $(X_k)_{ 0 \le k \le n}$ is the Euler scheme   (with step $\frac Tn$ and horizon $T$) associated with a  stochastic differential equation of the form~\eqref{eq:diffusion}.
 \begin{thm}   \label{ThmLocalLips}
 Let   $(\mathscr H_{\rm Liploc})$ hold   and assume that    $(\mathscr A_{1,loc})$   is fulfilled, as well as assumptions of Proposition~\ref{LemAssMainTheorem}.    Suppose that for every $k=0,\ldots,n$,   $X_k$ has an $(2, 2+\nu_k)$-distribution (in the sense of Definition~\ref{def:(r,s)}) for some  $\nu_k \in (0,d)$, and set $\bar {\nu}_n = \min_{k=0, \ldots,n} \nu_k/2$.   Then  for every $\nu \in (0,  \bar{\nu}_n)$, 
\begin{equation}  \label{EqUpperbound22PlusNu}
\vert \Pi_{y,n}f -\hat{\Pi}_{y,n} f \vert ^2 \leq  \frac{4 ( M_{n,\nu} K_g^{n})^2}{\phi_n^2(y) \vee \hat{\phi}_n^2(y)} \sum_{k=0}^n  B_k^n(f,y) \Vert X_k - \hat X_k \Vert_{2(1+\nu)}^2
\end{equation}
where  
$$ 
\phi_{n}(y) = \pi_{y,n} \mbox{\bf 1} \quad \textrm{and } \  \hat {\phi}_{n}(y) = \hat{\pi}_{y,n} \mbox{\bf 1}
$$
and
\begin{equation*}  
 B_k^n(f,y) : =  
 2^{k}\left(2\kappa_{g,X}^2\left(\frac{[P]_{\rm Liploc}^{n-k}-1}{[P]_{\rm Liploc} -1}\right)^2+   \frac{2[g^1]_{\rm Liploc}^2 +   [g^2]_{\rm Liploc}^2}{K_g^2}  +[P]_{\rm Liploc}^{2(n-k)}\right)     
 \end{equation*}
 with $\kappa_{g,X}=  2 C_{\theta,X} [g^1]_{\rm Liploc}  + [P]_{\rm Liploc}  [g^2]_{\rm Liploc}$ and 
$$ 
 M_{n,\nu}: =   1+  \max_{k=0, \ldots,n-1}\Big(\|\theta(X_k)\|_{ 2(1+\frac{1}{\nu})}    +  \|\theta(\hat X_k)\|_{2(1+\frac{1}{\nu})}   +   \|\theta(X_{k+1}) \|_{2(1+\frac{1}{\nu})}   +    \| \theta( \hat X_{k+1})\|_{2(1+\frac{1}{\nu})}\Big).
$$
 \end{thm}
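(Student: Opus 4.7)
The strategy mirrors the BSDE analysis of Theorem~\ref{TheoremPrincBsde}: first derive a squared error bound on the \emph{unnormalized} filter of the form $|\pi_{y,n}f - \hat\pi_{y,n}f|^2 \le (M_{n,\nu}K_g^n)^2\sum_{k=0}^n B_k^n \|X_k-\hat X_k\|_{2(1+\nu)}^2$, and then invoke Lemma~\ref{LemNLFilter} to transfer the estimate to the normalized filter, producing the prefactor $4/(\phi_n^2(y)\vee \hat\phi_n^2(y))$. The novelty compared to~\cite{PagPha} is the use of the distortion mismatch Theorem~\ref{thm:Mismatchnew}, which under the $(2,2+\nu_k)$-distribution hypothesis guarantees $\|X_k - \hat X_k\|_{2(1+\nu)} = O(N_k^{-1/d})$ for every $\nu < \nu_k/2$, thereby accommodating only local Lipschitz continuity of the $g_k$'s and of $P_k$.

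For the central step, I introduce the hybrid sequence $I_k := \mathbb E[u_k(\hat X_k)\hat L_{y,k}]$, $0\le k\le n$, with the convention $I_{-1} := u_{-1}(f) = \pi_{y,n}f$. Since $u_n = f$ (and $f = \hat u_n$ on $\Gamma_n$), we have $I_n = \hat\pi_{y,n}f$ directly from the definition, so that $\pi_{y,n}f - \hat\pi_{y,n}f = \sum_{k=0}^n (I_{k-1}-I_k)$. Each increment isolates the one-step substitution of the true kernel $H_{y,k}$ by its quantized avatar $\hat H_{y,k}$ applied to the same forward function $u_k$. Using the iterated random map representation $X_k = F_k(X_{k-1},\varepsilon_k)$ of Proposition~\ref{LemAssMainTheorem}(b), set $\tilde X_k := F_k(\hat X_{k-1},\varepsilon_k)$; the identity $u_{k-1}(\hat X_{k-1}) = \mathbb E[u_k(\tilde X_k)g_k(\hat X_{k-1},\tilde X_k)\mid\hat X_{k-1}]$ combined with the tower property yields
\[
I_{k-1} - I_k = \mathbb E\big[\{u_k(\tilde X_k)g_k(\hat X_{k-1},\tilde X_k) - u_k(\hat X_k)g_k(\hat X_{k-1},\hat X_k)\}\,\hat L_{y,k-1}\big].
\]
Split the bracket into a $u_k$-Lipschitz contribution (of constant $[u_k]_{\rm Liploc}$, estimated by Proposition~\ref{LemAssMainTheorem}(a)) and a $g_k$-Lipschitz contribution (of constants $[g_k^1]_{\rm Liploc}, [g_k^2]_{\rm Liploc}$ from $(\mathscr H_{\rm Liploc})$), both multiplied by $(1+\theta(\cdot))|\tilde X_k - \hat X_k|$. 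The coupling $|\tilde X_k - \hat X_k|\le [F_k]_{\rm Lip}|X_{k-1}-\hat X_{k-1}| + |X_k-\hat X_k|$, together with $\hat L_{y,k-1}\le K_g^{k-1}$, brings only consecutive quantization errors into play.

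To reach the claimed squared sum with the $2^k$ prefactor, I iterate the elementary inequality $(a+b)^2\le 2a^2 + 2b^2$ along the telescoping, which naturally produces the doubling and yields coefficients of the form $2^k$ on each $(I_{k-1}-I_k)^2$. The residual expectation $\mathbb E[(1+\theta(\cdot))^2(|X_{k-1}-\hat X_{k-1}|+|X_k-\hat X_k|)^2]$ is then handled by the generalized H\"older inequality $\|AB\|_2\le \|A\|_{2(1+1/\nu)}\|B\|_{2(1+\nu)}$ (valid since $1/(2(1+1/\nu))+1/(2(1+\nu)) = 1/2$): the $(1+\theta)$ factor is absorbed into $M_{n,\nu}$, while the quantization factor contributes $\|X_k-\hat X_k\|_{2(1+\nu)}^2$, finite by Theorem~\ref{thm:Mismatchnew}. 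The Lipschitz bound $[u_k]_{\rm Liploc}^2$ from Proposition~\ref{LemAssMainTheorem} produces the two contributions $\kappa_{g,X}^2\bigl(\tfrac{[P]^{n-k}-1}{[P]-1}\bigr)^2$ (from the $\|f\|_\infty$ part) and $[P]^{2(n-k)}$ (from the $[f]_{\rm Liploc}$ part); the direct $g_k$-Lipschitz estimate combined with $\|u_k\|_\infty\le K_g^{n-k}\|f\|_\infty$ produces the $(2[g^1]^2+[g^2]^2)/K_g^2$ term after absorbing a common $K_g^{2n}$ factor into the overall prefactor $(M_{n,\nu}K_g^n)^2$. A final application of Lemma~\ref{LemNLFilter} with $\mu_y = \pi_{y,n}$ and $\vartheta_y = \hat\pi_{y,n}$ transfers the resulting squared bound to the normalized filter and yields the announced estimate.

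\textbf{Main obstacle.} The hardest step is the telescoping increment itself: the kernel substitution $H_{y,k}\to \hat H_{y,k}$ forces the introduction of the auxiliary variable $\tilde X_k$, which exists only via the iterated random map structure and couples the errors at consecutive time steps $k-1$ and $k$. One must then carefully combine this coupling with the doubling inequality so as to reassemble the contributions into the clean additive form $\sum_k B_k^n\|X_k-\hat X_k\|_{2(1+\nu)}^2$, and recover the exact coefficients of $B_k^n$ (in particular the $(2[g^1]^2 + [g^2]^2)/K_g^2$ structure, rather than cross-terms of the form $[g^1][g^2]$) through careful bookkeeping of the $K_g$-powers arising from $\|u_k\|_\infty$ and $\hat L_{y,k-1}$.
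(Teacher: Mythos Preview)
Your telescoping strategy breaks at its very first identification: with the natural reading $\hat L_{y,k}=\prod_{j\le k}g_j(\hat X_{j-1},\hat X_j)$, the endpoint $I_n=\mathds E[f(\hat X_n)\hat L_{y,n}]$ is \emph{not} the quantized un-normalized filter $\hat\pi_{y,n}f$ of this paper. Here $\hat\pi_{y,n}f=\mathds E[\hat u_0(\hat X_0)]$ is built from the \emph{marginal} kernels $\hat H_{y,k}$ via $\hat u_k(\hat X_k)=\hat{\mathds E}_k\big[\hat u_{k+1}(\hat X_{k+1})g_{k+1}(\hat X_k,\hat X_{k+1})\big]$, and since $(\hat X_k)_k$ is not a Markov chain this recursion does \emph{not} unroll into a path expectation: already
\[
\mathds E\big[\hat u_1(\hat X_1)g_1(\hat X_0,\hat X_1)\big]=\mathds E\Big[\mathds E\big[\hat u_2(\hat X_2)g_2(\hat X_1,\hat X_2)\mid\hat X_1\big]\,g_1(\hat X_0,\hat X_1)\Big]
\]
differs from $\mathds E[\hat u_2(\hat X_2)g_2g_1]$ because $g_1(\hat X_0,\hat X_1)$ is not $\sigma(\hat X_1)$-measurable. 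Your telescope therefore connects $\pi_{y,n}f$ to the wrong object, and no amount of bookkeeping on the increments $I_{k-1}-I_k$ can recover the stated bound.

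The paper proceeds by the Pythagoras argument you announce in your plan but never actually deploy. One compares $\|u_k(X_k)-\hat u_k(\hat X_k)\|_2^2$ directly via the orthogonal decomposition
\[
\|u_k(X_k)-\hat u_k(\hat X_k)\|_2^2=\|u_k(X_k)-\hat{\mathds E}_ku_k(X_k)\|_2^2+\|\hat{\mathds E}_ku_k(X_k)-\hat u_k(\hat X_k)\|_2^2,
\]
bounds the first term by $[u_k]_{\rm Liploc}^2 M_{k,\nu}^2\|X_k-\hat X_k\|_{2(1+\nu)}^2$ via the H\"older splitting you describe, and for the second uses the tower property $\hat{\mathds E}_ku_k(X_k)=\hat{\mathds E}_k\big[u_{k+1}(X_{k+1})g_{k+1}(X_k,X_{k+1})\big]$ together with the definition of $\hat u_k(\hat X_k)$ and the $L^2$-contraction of $\hat{\mathds E}_k$. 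This yields the one-step recursion
\[
\|u_k(X_k)-\hat u_k(\hat X_k)\|_2^2\le 2K_g^2\,\|u_{k+1}(X_{k+1})-\hat u_{k+1}(\hat X_{k+1})\|_2^2+\alpha_k\|X_k-\hat X_k\|_{2(1+\nu)}^2+\beta_{k+1}\|X_{k+1}-\hat X_{k+1}\|_{2(1+\nu)}^2.
\]
The factor $2^\ell$ in $B_\ell^n$ then arises from iterating the multiplier $\widetilde K=2K_g^2$ backwards (the $K_g^{2\ell}$ portion being absorbed into the global $(K_g^n)^2$), not from doubling a squared telescoped sum. No auxiliary coupling $\tilde X_k=F_k(\hat X_{k-1},\varepsilon_k)$ is needed; the comparison stays between $(X_k,X_{k+1})$ and $(\hat X_k,\hat X_{k+1})$ directly. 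One concludes with $|\pi_{y,n}f-\hat\pi_{y,n}f|^2\le\|u_0(X_0)-\hat u_0(\hat X_0)\|_2^2$ and then Lemma~\ref{LemNLFilter}.
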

 
 Let us make few remarks about the assumptions of the theorem before dealing with the proof. 
 \begin{rem}
$(a)$ If  $\theta$ is convex  and   if all $\hat X_k$ are quadratic  optimal quantizers,  then it is   stationary  $i.e.$ satisfies $\hat X_k = \mathds E(X_k\,|\, \hat X_k)$ so that,  for every $k=0, \ldots,n$,  we have, owing to the convexity of  $\theta^{2(1+\frac{1}{\nu})}$ and Jensen's Inequality, 
 $$ 
 \|\theta ( \hat X_k ) \|_{2(1+\frac{1}{\nu})}   \leq \mathds \| \theta( X_k )\|_{2(1+\frac{1}{\nu})} <+\infty.
 $$
 
 \noindent $(b)$ Suppose that  $(X_k)_{k=0,\ldots,n}$ is   a Markov chain  of iterated random maps
  \[
 X_{k} = F_{k}(X_{k-1}, \varepsilon_{k}), \quad k=1, \ldots,n,
 \]
 under the assumptions of Proposition~\ref{LemAssMainTheorem}. Assume it satisfies the $\theta$-local Lipschitz assumption with a function $\theta(y)\ge  C|y|^a$ for some real constants $C, a>0$. If $a>\frac 12 \frac{  (2+\nu) d}{d- \nu}$ for some $\nu\!\in (0,d)$, and  the distributions of  $X_k$  are absolutely continuous, then, all $X_k$ have an $(2,2+\nu)$-distribution.
%
%
 \end{rem}

 \begin{proof} 
  Like in~\cite{PagPha}, the proof relies on the backward formulas~\eqref{EqBackwardInduction} and~\eqref{EqBackwardInductionhat} involving the functions $u_{y,k}(f)$ and their quantized counterpart $\hat u_{y,k}(f)$ whose final values $u_{-1}$ and $\hat u_{-1}$ define the un-normalized filter $\pi_{y,n}(f)$  (applied to the function $f$) and its quantized counterpart, respectively. 

Following the lines of the proof  of Theorem~3.1 in~\cite{PagPha}, one shows by a backward induction taking advantage of the Markov property that the functions $u_k:\mathbb R^d\to \mathbb R$, $k=0,\ldots, n$, defined recursively by~\eqref{EqBackwardInduction} satisfy $u_n = f$ and 
 \begin{equation}\label{eq:ukfilter}
 u_k(X_k) = \mathds{E}_k \big(\varphi_{k+1}(X_{k}, X_{k+1})\big)
= \mathds{E}  \big(\varphi_{k+1}(X_{k}, X_{k+1})\,|\, X_k\big),\, k=0,\ldots,n-1
 \end{equation}
 where
  $$
 \varphi_{k+1}(x_k,x_{k+1}):= g_{k+1}(x_k,x_{k+1}) u_{k+1}(x_{k+1}),\; x_k,\, x_{k+1}\!\in \mathbb R^d.
 $$
Finally $u_{-1}= \mathds E(u_0(X_0))= \pi_{y,n}(f)$ (un-normalized filter applied to $f$).  One shows likewise that the functions $\hat u_k$ defined by~\eqref{EqBackwardInductionhat} satisfy $\hat u_n =f $ (on the grid $\Gamma_n$) and  
 \begin{equation}\label{eq:ukfilterhat}
 \hat u_k(\hat X_k) = \hat{\mathds{E}}_k \big(\hat u_{k+1}(\hat X_{k+1}) g_{k+1} (\hat X_{k},\hat X_{k+1})\big),\, k=0,\ldots,n-1,
 \end{equation}
so that finally $\hat u_{-1}(f) = \E\, \hat u_0(\hat X_0)) = \hat \pi_{y,n}(f)$ (quantized un-normalized filter). One shows like for the functions $u_k$ in Proposition~\ref{LemAssMainTheorem} that $\|\hat u_k\|_{\infty}\le K_g^{n-k}\|f\|_{\infty}$. 
Now, using the definition of conditional expectation $\hat{\mathds E}_k$ as an orthogonal projection (hence an $L^2$-contraction as well),   we have 
 \begin{eqnarray}  
\nonumber  \Vert  u_k(X_k) - \hat{u}_k(\hat X_k) \Vert_2^2 & = &   \Vert  u_k(X_k)  -  \hat{\mathds E}_k(u_k( X_k) )\Vert_2^2+\| \hat{\mathds E}_k(u_k( X_k) )-\hat{u}_k(\hat X_k) \Vert_2^2  \\
 \nonumber  & \leq &  \Vert  u_k(X_k)  -  \hat{\mathds E}_k(u_k( X_k) )\Vert_2^2\\
 \label{eq:IneqFiltrageQ}&&+ \Vert   \varphi_{k+1}( X_k,   X_{k+1})-  \hat u_{k+1}(\hat X_{k+1}) g_{k+1} (\hat X_{k},\hat X_{k+1}) \Vert_2^2
   \end{eqnarray}
where we used in the second line the tower property  for conditional expectation to show that  $\hat{\mathds E}_k(u_k( X_k) ) =  \hat{\mathds E}_k(  \varphi_{k+1}( X_k,   X_{k+1}))$ (the $\sigma$-field $\sigma(\widehat X_k)\subset {\cal F}_k$) and the contraction property of $ \hat{\mathds E}_k$.

 It follows now from the definition of the conditional expectation $\hat{\mathds E}_k(\cdot)$ as the best approximation in $L^2$  among   square integrable $\sigma(\hat X_k)$-measurable random vectors that
$$ \Vert  u_k(X_k) - \hat{\mathds E}_k (u_k (X_k)) \Vert_2^2    \leq    \Vert  u_k(X_k) -  u_k (\hat X_k) \Vert_2^2 \leq [u_k]_{\rm Liploc}^2  \Vert  (1 + \theta(X_k) + \theta(\hat X_k)) (X_k - \hat X_k) \Vert_2^2. 
 $$ 
    Let $\nu \!\in (0, \bar{\nu}_n)$, so that for every $k=0, \ldots,n$,  $2(1+\nu)  \le 2 +\nu_k$. H\"{o}lder's inequality with conjugate exponents $p_{\nu}=1+\nu$ and $q_{\nu} = 1 + \frac{1}{\nu}$ gives
$$ \Vert  u_k(X_k) - \hat{\mathds E}_k (u_k (X_k)) \Vert_2^2  \le [u_k]_{\rm Liploc}^2 \big \|1 +   \theta(X_k)  + \theta(\hat X_k) \big\|^2_{2 q_{\nu}}  \Vert X_k - \hat X_k \Vert_{2(1+\nu)}^2. 
$$

Let us deal now with the second  term on the right hand side of~\eqref{eq:IneqFiltrageQ} and set for convenience 
\[
\Delta_k:=   \varphi_{k+1}( X_k,   X_{k+1})-  \hat u_{k+1}(\hat X_{k+1}) g_{k+1} (\hat X_{k},\hat X_{k+1}).
 \]
 By the triangle  inequality and the boundedness of $g_{k+1}$, we get
  \begin{eqnarray*}  
|\Delta_k|&\le& \big|(u_{k+1}(X_{k+1})- \hat u_{k+1}(\hat X_{k+1})) g_{k+1}(X_k,X_{k+1}) \big| \\
 &&+ \big|  \hat u_{k+1}(\hat X_{k+1})) (g_{k+1}(X_k,X_{k+1})-g_{k+1} (\hat X_{k},\hat X_{k+1})) \big|\\
 &\le& K_g \big|u_{k+1}(X_{k+1})-\hat u_{k+1}(\hat X_{k+1}) \big|   + \|\hat u_{k+1}\|_{\infty}\big|g_{k+1}(X_k,X_{k+1})-g_{k+1} (\hat X_{k},\hat X_{k+1}) \big|
   \end{eqnarray*}
 so that 
\[
 \Vert \Delta_k \Vert_2^2 \le 2 K_g^2   \Vert  u_{k+1}(X_{k+1})-\hat u_{k+1}(\hat X_{k+1})   \Vert _2^2+ 2 \|\hat u_{k+1}\|^2_{\infty}\Vert g_{k+1}(X_k,X_{k+1})-g_{k+1} (\hat X_{k},\hat X_{k+1}) \Vert_2^2.
\]
 %
It follows  from   $(\mathscr H_{\rm Liploc})$,     H\"{o}lder  (still  with $p_{\nu}$ and $q_{\nu}$) and Minkowski inequalities  that
  \begin{eqnarray*}\label{EqControlRk} 
\Vert g_{k+1}(X_k,X_{k+1})&-&g_{k+1} (\hat X_{k},\hat X_{k+1}) \Vert_2^2\\
  &\leq &    [g_{k+1}^2]^2_{\rm Liploc}   \mathds E \Big[ \big(1+ \theta(X_k)   + \theta(\hat X_k)   +  \theta(X_{k+1} )   +   \theta( \hat X_{k+1}) \big) ^2 \vert  X_{k+1} - \hat X_{k+1} \vert^2 \Big]\\
 &&+ [g_{k+1}^1]^2_{\rm Liploc}  \mathds E \Big[ \big(1+ \theta(X_k)   + \theta(\hat X_k)   +  \theta(X_{k+1} )   +   \theta( \hat X_{k+1}) \big) ^2 \vert  X_{k} - \hat X_{k} \vert^2 \Big].\\
 & \leq &  \big( M_{k,\nu}\big)^2 \Big( [g_{k+1}^2]^2_{\rm Liploc} \Vert X_{k+1 } - \hat X_{k+1}  \Vert_{2(1+\nu)}^2 + [g_{k+1}^1]^2_{\rm Liploc}    \Vert X_k  - \hat X_k  \Vert_{2(1+\nu)}^2\Big)  \end{eqnarray*}
where $M_{k,\nu}:=   1+ \|\theta(X_k)\|_{ 2 q_{\nu}}    +  \|\theta(\hat X_k)\|_{2 q_{\nu}}   +   \|\theta(X_{k+1}) \|_{2 q_{\nu}}   +    \| \theta( \hat X_{k+1})\|_{2 q_{\nu}}   $.

Plugging these bounds in~\eqref{eq:IneqFiltrageQ}, we finally get that, for every $k=0,\ldots,n-1$,
$$  
\Vert  u_k(X_k) - \hat{u}_k(\hat X_k) \Vert_2^2      \leq  \widetilde K  \Vert  u_{k+1}(X_{k+1}) - \hat{u}_{k+1}(\hat X_{k+1}) \Vert_2^2 + \alpha_k  \Vert  X_k - \hat X_k \Vert_{2(1+\nu)}^2 + \beta_{k+1}  \Vert X_{k+1} - \hat X_{k+1}\Vert_{2(1+\nu)}^2 
$$
where $ \widetilde K= 2(K_g)^2$
%
 \begin{eqnarray*}    
     \alpha_{k}& :=  &  (M_{k,\nu})^2\big([u_k]^2_{\rm Liploc}   + 2    \Vert  \hat u_{k+1} \Vert_{\infty}^2 [g_{k+1}^1]^2_{\rm Liploc}\big), \quad 0 \leq k \leq n,\hskip 2,1cm \\
 \textrm{and  }  \hskip 3,6cm \beta_{k} &:=&  2  \,   \big(M_{k,\nu} \Vert \hat  u_{k} \Vert_{\infty} [g_{k}^2]_{\rm Liploc}\big)^2 , \qquad  \qquad  \qquad \quad \quad \; 1 \leq k \leq n,\hskip 2,2cm
 \end{eqnarray*}
(we set $u_{n+1}=0$ by convention so that  $\alpha_{n}:=  \big([f]_{\rm Liploc}M_{n,\nu})^2$). It follows by induction that
  \[
   \|u_k(X_k)- \hat u_k(\hat X_k)\|^2_2  \leq \frac{1}{ \widetilde K^{k}} \sum_{\ell=k}^nC_{\ell,n}(f,y)  \Vert  X_\ell - \hat X_\ell \Vert_{2(1+\nu)}^2,\;k=0,\ldots,n,\\  
 \] 
  where, using the upper-bound for $[u_\ell]_{\rm Liploc}$ given by~\eqref{eq:Lesaell} (and the definition of $\kappa_{g,X}$ that follows),  
  \begin{eqnarray*}
 C_{\ell,n}(f,y)& : = &  \widetilde K^{\ell-1} (\alpha_{\ell}  \widetilde K + \beta_{\ell}), \; \ell=0,\ldots,n  \\
  & = &   2^{\ell} (M_{\ell,\nu})^2 \left( (K_g)^{2\ell}  [u_{\ell}]^2_{\rm Liploc}  +  (K_g)^{2(n-1)} \Vert f \Vert_{\infty}^2  (2[g_{\ell+1}^1]_{\rm Liploc}^2 + [g_{\ell}^2]_{\rm Liploc}^2) \right)  \\
  & \leq & 2^{\ell+1} (M_{\ell,\nu} (K_g)^{n})^2  \Bigg[\left(\kappa_{g,X}^2\Bigg(\frac{[P]_{\rm Liploc}^{n-\ell}-1}{[P]_{\rm Liploc} -1}\right)^2 +   \frac{2[g^1]_{\rm Liploc}^2 +  [g^2]_{\rm Liploc}^2}{2(K_g)^2} \Bigg)\|f\|_{\infty}^2 \\
  && \hskip 9,5 cm +\,[P]^{2(n-\ell)}_{\rm Liploc} [f]^2_{\rm Liploc}   \Bigg]
     \end{eqnarray*}
(we also used the elementary inequality $ab\le \frac 12(a^2+b^2)$, $a,b\ge 0$ in the third line).    
Finally 
\begin{eqnarray*}
 \vert \pi_{y,n}f -\hat{\pi}_{y,n}f \vert^2&=& | \mathds E\, u_0(X_0) - \mathds E\,\hat u_0(\hat X_0)|^2\\
  &\le&\| u_0(X_0)-  \hat u_0(\hat X_0)\|^2_2   \\
  &\le& \big((K_g)^n   M_{n,\nu}\big)^2\big(R_{y,n} \|f\|^2_{\infty} +S_{y,n} [f]^2_{\rm Liploc}\big)
\end{eqnarray*}
where
\[
R_{y,n}=  \sum_{\ell=0}^n \left[2^{\ell+1}\left(\kappa_{g,X}^2\left(\frac{[P]_{\rm Liploc}^{n-\ell}-1}{[P]_{\rm Liploc} -1}\right)^2+   \frac{2[g^1]_{\rm Liploc}^2 +   [g^2]_{\rm Liploc}^2}{2(K_g)^2} \right)\right]     \Vert  X_\ell - \hat X_\ell \Vert_{2(1+\nu)}^2 
\]
and
\[
S_{y,n}=  \sum_{\ell=0}^n 2^{\ell+1}    [P]_{\rm Liploc}^{2(n-\ell)}  \Vert  X_\ell - \hat X_\ell \Vert_{2(1+\nu)}^2.
\]
 We conclude by Lemma~\ref{LemNLFilter}.
 \end{proof}


%
%

  
 The previous theorem highlights the usefulness of the distortion mismatch result: it allows  to switch from Lipschitz continuous assumptions on the functions $g_k$ into {\em local Lipschitz} assumptions.

\begin{rem} Note  that if we consider Assumption   $(\mathscr{H}_0)$ instead of Assumption  $(\mathscr H_{\rm Liploc})$ in Theorem~\ref{ThmLocalLips}  {\em we still improve the upper bound established in Theorem~3.1  of~\cite{PagPha} since}  this amounts to setting $\theta \equiv 0$ and replacing everywhere  the  ``$[.]_{\rm Liploc}$" coefficients by $[.]_{\rm Lip}$. Then, like for BSDEs, the squared global error appears  as the (weighted) cumulated sum of the squared quantization errors.
%
\end{rem}

\small
\bibliographystyle{plain}
\bibliography{NLfilteringbib}
%
%

\begin{comment}
\newpage
\centerline{\bf Answer to the referee ($SPA-2015-92R3$)}

 First we would like to thank the referee for this new  reviewing of the manuscript. 
 
 We corrected  all typos, missing definitions of notations (including Point 19. about $\underline s$)  and modified all detected awkward formulations  as suggested by the referee.

 \medskip We answer below in more details  to points $10$, $20$, $21$, $27$, $31$, $37$, $38$ and $43$. We temporarily  left the most significant modifications in blue throughout the manuscript.

 \begin{itemize}
 
 \item Point $10$. We did adopt the word ``projection'' though at this stage $q_k$ (now denoted by $\pi_k$) can be {\em any} $\Gamma_k$-valued Borel function. Of course, optimal quantization theory leads to consider in practice a Voronoi projection.
  
 \item Point $20$. We thank the referee for the suggested references. In fact, even in our extended proof posted on ArXiv we do not provide an original proof for this claim but cite an old) reference by Bouchard-Touzi (\cite{BouTou}). We followed your  recommendation and updated claim~$(b)$ in order to present a less stringent assumption on the regularity of the  terminal value $h$ (though it induces a slight increase of that of the driver). We also mentioned the third result which only requires a fractional regularity on $h$ right after the theorem.
 
 \medskip 
 \item Point $21a$. We carefully checked the proof of Theorem~3.2 and came to the conclusion that the formula $K_n(b,\sigma, f)= [h]^2_{\rm Lip} $ is correct. The key inequality is Equation (49) and the lines that follow which are simply a discrete time Gronwall lemma.
 
  \medskip 
 \item Point $21b$.  Our choice to introduce $n_0$ is probably questionable. Our motivation is to provide sharper constants: the constants $K_i(b,\sigma, T,f)$ that come out in the computations are in fact depending on $n$ in a decreasing way through the time step $\Delta_n$. Rather than introducing misleading ``$n$-dependent constants'' $K_i(b,\sigma, T,f,n)$ which would have been decreasing in $n$, we adopted this presentation relying on an exogenous ``starting" discretization step $\Delta_{n_0}$. 
 
 We agree that our presentation was not clear enough  and we modified the presentation of the theorem to take into account the comments by the referee and added a very short comment inside the statement of the theorem to highlight what $n_0$ is. This  is of course reversible.
 
  \medskip 
 \item Point $27$.   In fact there is no error: we first deal in these equations with the second term on the right hand side of the former  inequality. We hopefully made this point clearer, now.
 
   \medskip 
 \item Point $31$. We do not understand either why $\frac 12$ stands there (in fact twice in the paper). It seems to be an error  induced by the fact that the two co-authors do not use the same encoding system \dots  
 Anyway we cancelled this inappropriate coefficient.
    \medskip 
 \item Point $37$-$38$. We propose to maintain the verb ``propagate''  to qualify the property shared by the transitions $P_k(x,dy)$ since what we mean is : if $f$ is $\theta$-locally Lipschitz, then $x\mapsto Pf(x)$ is again $\theta$-Lipchitz (with a uniform control on the $\theta$-Lipschitz coefficient). 

 \medskip 
 \item Point $43$.  In fact the observations, denoted by $Y=(Y_0, \ldots,Y_n)$ or $y=(y_0,\ldots,y_n)$ in the original filtering model no longer appear in the computations. They are dropped in the notation of $g_k$, as well as the reference to $f$ in the notation of $u_k$, etc. The aim is  to alleviate the notations as it was briefly  mentioned prior to Proposition~6.2. In fact, this  is  a quite classical habit in the field of filtering. However we tried to  make this convention more visible and clear. 
 After checking carefully Equation~(75) at the top of page 30, we think it is correct. However, the definition of the function $\varphi_{k+1}$ was misleading and we modified it to avoid confusion. 
 \end{itemize}

We hope this new version will comply with the recommendations of the referee.
\end{document}